\documentclass[11pt]{amsart}

\usepackage{amssymb,latexsym}



\usepackage{graphicx,epsfig}

\usepackage[dvipsnames]{xcolor}



\setlength{\marginparwidth}{20mm}

\def\cal{\mathcal}

\def\Bbb{\mathbb}


\def\R{{\Bbb R}}	


\def\ext{{\cal E}} 

\def\cP{{\cal P}}
\def\I{{\cal I}}
\def\cR{{\cal R}}
\def\cT{{\cal T}}
\def\Landau{{\cal O}}

\def\de{{\delta}}

\def \supp {\text{\rm supp\,}}
\def \graph {\text{\rm graph\,}}

\def\trans{{\,}^t}


\relpenalty   = 9999 
\binoppenalty = 9999 
\clubpenalty = 10000
\widowpenalty = 10000 \displaywidowpenalty = 10000

\textwidth15.5cm \textheight21cm \evensidemargin.2cm
\oddsidemargin.2cm

\addtolength{\headheight}{3.2pt}    

\newtheorem{thmnr}{Theorem}[section]
\newtheorem{lemnr}[thmnr]{Lemma}

\newtheorem{remnr}[thmnr]{Remark}

\newtheorem{definr}{Definition}[section]






\begin{document}

\title[A Fourier restriction theorem for a perturbed hyperbolic paraboloid]{A Fourier
restriction
theorem for a perturbed hyperbolic paraboloid\\
}

\author[S. Buschenhenke]{Stefan Buschenhenke}
\address{S. Buschenhenke:  Mathematisches Seminar, C.A.-Universit\"at Kiel,
Ludewig-Meyn-Stra\ss{}e 4, D-24118 Kiel, Germany}
\email{{\tt buschenhenke@math.uni-kiel.de}}
\urladdr{http://www.math.uni-kiel.de/analysis/de/buschenhenke}

\author[D. M\"uller]{Detlef M\"uller}
\address{D. M\"uller: Mathematisches Seminar, C.A.-Universit\"at Kiel,
Ludewig-Meyn-Stra\ss{}e 4, D-24118 Kiel, Germany}
\email{{\tt mueller@math.uni-kiel.de}}
\urladdr{http://www.math.uni-kiel.de/analysis/de/mueller}

\author[A. Vargas]{Ana Vargas}
\address{A. Vargas: Departmento de Mathem\'aticas, Universidad Aut\'onoma de  Madrid,
28049
Madrid,
Spain
}
\email{{\tt ana.vargas@uam.es}}
\urladdr{{http://matematicas.uam.es/~AFA/}}

\thanks{2010 {\em Mathematical Subject Classification.}
42B25}
\thanks{{\em Key words and phrases.}
hyperbolic  hypersurface, Fourier restriction}
\thanks{The first author was partially supported by the ERC grant 307617.\\
The first two authors were partially supported by the DFG grants MU 761/ 11-1 and MU 761/
11-2.\\
The third author was partially supported by grants MTM2013--40945 (MINECO) and
MTM2016-76566-P (Ministerio de Ciencia, Innovaci$\acute{\text{o}}$n y Universidades), Spain.}

\begin{abstract} In contrast to elliptic surfaces, the Fourier restriction problem for
hypersurfaces of non-vanishing  Gaussian curvature  which admit principal  curvatures of
opposite signs is still  hardly understood. In fact, even for 2-surfaces, the only case of
a hyperbolic surface for which Fourier restriction estimates  could be established that
are
analogous to the ones known for elliptic surfaces is the hyperbolic paraboloid  or
``saddle'' $z=xy.$
 The bilinear method gave here sharp results for $p>10/3$ (\cite{lee05}, \cite{v05},
 \cite{Sto}), and this result was recently improved to $p>3.25$ \cite{chl17} \cite{k17}.
  This paper  aims to be a first step in extending  those results to more general
  hyperbolic  surfaces. We consider a specific  cubic perturbation of  the saddle and
  obtain the sharp result, up to the end-point, for $p>10/3.$ In the application of the
  bilinear method, we show that the behavior at small scales in our surface is drastically
  different from the saddle. Indeed, as it turns out, in some regimes the perturbation
  term assumes a dominant role, which necessitates  the introduction of a number of new
  techniques that should also  be useful for the study  of more general hyperbolic
  surfaces.
This specific perturbation has turned out to be of fundamental importance also to the understanding of more
general
  classes of perturbations.
\end{abstract}

\maketitle


\tableofcontents

\thispagestyle{empty}

\setcounter{equation}{0}
\section{Introduction}\label{intro}

E. M. Stein proposed in the seventies the problem of restriction of the Fourier
transform to hypersurfaces. Given a smooth hypersurface  $S$  in $\R^n$ with surface
measure $d\sigma_S,$  he asked for the range  of exponents
$\tilde p$ and $\tilde q$ for which the estimate
\begin{align}
\bigg(\int_S|\widehat{f}|^{\tilde q}\,d\sigma_S\bigg)^{1/\tilde q}\le C\|f\|_{L^{\tilde
p}(\R^n)}
\end{align}
holds  true for   every Schwartz function   $f\in\cal S(\R^n),$ with a constant $C$
independent of $f.$
The sharp range in dimension  $n=2$ for curves with non-vanishing curvature was determined
through work by  C. Fefferman, E. M. Stein and A. Zygmund \cite{F1}, \cite{Z}. In higher
dimension, the sharp $L^{\tilde p}-L^2$ result  for hypersurfaces with
non-vanishing Gaussian curvature was obtained by E. M. Stein and P. A. Tomas \cite{To},
\cite{St1} (see also Strichartz \cite{Str}). Some more general classes of surfaces were
treated by A. Greenleaf \cite{Gr}. Many years later, general finite type surfaces in
$\R^3$
(without  assumptions on the curvature)  have been considered in work by I. Ikromov, M.
Kempe and D. M\"uller  \cite{ikm}  and Ikromov and M\"uller \cite{IM-uniform}, \cite{IM},
and the sharp range of Stein-Tomas type    $L^{\tilde p}-L^2$  restriction
estimates has been  determined  for a large class  of smooth, finite-type hypersurfaces,
including all analytic hypersurfaces.

The question about  general  $L^{\tilde p}-L^{\tilde q}$ restriction
estimates is nevertheless still wide open.  Fundamental progress has been made since the
nineties,  with major new ideas  introduced by J. Bourgain  (see for instance
\cite{Bo1}, \cite{Bo2}) and T. Wolff (\cite{W1}),  which led to a better understanding of
the case of  non-vanishing Gaussian  curvature. These ideas and methods were further
developed by A. Moyua, A. Vargas, L. Vega and T. Tao (\cite{MVV1}, \cite{MVV2}
\cite{TVV}),
who established  the so-called bilinear approach (which   had been anticipated in the work
of C. Fefferman \cite{F1} and  had  implicitly been  present in the work of J. Bourgain
\cite{Bo3}) for hypersurfaces with non-vanishing Gaussian curvature for which all
principal
curvatures  have the same sign. The  same method was applied to  the  light cone by
Tao-Vargas (see \cite{TV1}, \cite{TV2}).  A culmination  of the application of the
bilinear  method to such types of  surfaces  was reached in work by T. Tao \cite{T2} (for
positive principal  curvatures), and T. Wolff \cite{W2}  and T. Tao \cite{T4} (for the
light cone). In particular, in these last  two papers the sharp  linear restriction
estimates for the  light  cone in  $\R^4$  were obtained.

In the last years,  J. Bourgain and L. Guth \cite{BoG} made further important  progress on
the case of non-vanishing curvature by making use  also of  multilinear  restriction
estimates due to  J. Bennett, A. Carbery and T. Tao \cite{BCT}. Later L. Guth \cite{Gu16},
\cite{Gu17} improved  these results by using the polynomial partitioning method.

For the case of non-vanishing curvature but principal curvatures of different signs, the
bilinear method was applied independently  by S. Lee \cite{lee05}, and A. Vargas
\cite{v05}, to a specific surface, the hyperbolic paraboloid (or "saddle"). They obtained
a
result which is analogous to Tao's theorem \cite{T2} except for the end-point. B. Stovall
\cite{Sto} recently proved the end-point case. Also, C. H. Cho and J. Lee \cite{chl17} and
J. Kim \cite{k17} improved the range. Perhaps surprisingly at a first thought, their
methods did not give the desired result for any other surface with negative Gaussian
curvature. A key element in their proofs is the fact that the hyperbolic paraboloid is
invariant under certain anisotropic dilations, and no other surface satisfies this same
invariance.
\medskip

Our aim in this article is to provide some  first steps towards gaining an understanding
of  Fourier restriction   to more general hyperbolic surfaces,  by   generalizing Lee's and
Vargas' result on the saddle to certain model surface $S,$ namely the graph of the
function
$\phi(x,y):=xy+\frac{1}{3}y^3$ over a given  small neighborhood of  the origin. Observing
that our specific  $\phi$ is homogeneous under the parabolic scalings $(x,y)\mapsto (r^2
x,
ry), \, r>0,$ we may here assume as well that
\begin{align}\label{surface}
S:=\{(x,y,xy+y^3/3):(x,y)\in Q:=I\times I\},
\end{align}
where $I:=[-1,1].$

\smallskip

\noindent{\bf Remark.} We do concentrate here on the seemingly particular case of a perturbation of the form $y^3/3$ in
order
to not to have to deal with additional technical issues which come up when studying more general perturbations,
say, of the form
$f(y),$ but should like to mention that the understanding of this particular perturbation is indeed the key to
understanding more
general ones depending on $x,$ or $y,$ only. Indeed, in the follow-up preprint \cite{bmv19}, we show how finite
type perturbations  $f(y)$ can be essentially reduced to the special case studied here,  and in further article
we shall also study flat perturbations $f(y).$

\smallskip

As usual, it will be more convenient to use duality and work in the adjoint setting.  If
$\cR$ denotes the Fourier restriction operator $g\mapsto \cR g:=\hat g|_S$ to the surface
$S,$ its adjoint operator $\cR^*$ is given by $\cR^*f(\xi)=\ext f(-\xi),$ where
 $\ext$ denotes the ``Fourier extension'' operator given by
\begin{align}\label{defop}
	\ext f(\xi):=\widehat{f\,d\sigma_S}(\xi)= \int_S f(x)e^{-i\xi\cdot x}\,d\sigma_S(x),
\end{align}
with  $f\in L^q(S,\sigma_S).$ The restriction problem is therefore
equivalent to the question of finding the appropriate range of exponents for which the
estimate
$$
\|\mathcal E f\|_{L^r(\R^3)}\le C\|f\|_{L^q(S,d\sigma_S)}
$$
holds true with a constant $C$ independent of the function   $f\in L^q(S,d\sigma_s).$

By identifying a point $(x,y)\in Q$ with the corresponding point $(x,y,\phi(x,y))$ on $S,$
we may regard our Fourier extension operator $\ext$  as well as an operator mapping
functions on $Q$ to functions on $\R^3,$ which in  terms of our phase function
$\phi(x,y)=xy+y^3/3$ can be expressed  more explicitly  in the form
$$
\ext f(\xi)=\int_Q f(x,y) e^{-i(\xi_1 x+\xi_2 y+\xi_3\phi(x,y))} \eta(x,y) \, dx dy,
$$
if $\xi= (\xi_1,\xi_2,\xi_3)\in \R^3,$ with a smooth density $\eta.$

Our main result is the following

\begin{thmnr}\label{mainresult}
	Assume that $r>10/3$ and  $1/q'>2/r,$ and let $\ext$ denote  the Fourier extension
operator associated to the graph $S$ of the above phase function $\phi$. Then
	\begin{align*}
		\|\ext f\|_{L^r(\R^3)} \leq C_{r,q} \|f\|_{L^q(Q)}
	\end{align*}
	for all $f\in L^q(Q)$.
\end{thmnr}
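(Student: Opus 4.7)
My plan is to follow the bilinear method of Tao, Vargas and Vega \cite{TVV}, as adapted to the hyperbolic paraboloid by Lee \cite{lee05} and Vargas \cite{v05}. The first step is to reduce Theorem \ref{mainresult} to a bilinear restriction estimate of the form
\begin{equation*}
\|\ext f_1\cdot \ext f_2\|_{L^{r/2}(\R^3)}\leq C \|f_1\|_{L^2(Q_1)}\|f_2\|_{L^2(Q_2)},\qquad r/2>5/3,
\end{equation*}
uniformly in $f_1,f_2$ supported on pairs of subsquares $Q_1,Q_2\subset Q$ whose associated surface pieces are transversally separated, in the sense that the normals to $S$ over $Q_1$ and $Q_2$ span a definite cone. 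The standard bilinear-to-linear machinery --- a Whitney decomposition of $Q\x Q$, $\e$-removal, and interpolation against the trivial $L^\infty$ estimate --- will then promote such a bilinear bound to the linear claim in the theorem, yielding precisely the range $r>10/3$ and $1/q'>2/r$.

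To establish the bilinear estimate I would combine a wave packet decomposition with an induction on scales. At frequency scale $R$, each $f_i$ decomposes into wave packets supported on $R^{-1/2}$-caps in $Q_i$, associated to tubes of dimensions $R^{1/2}\x R^{1/2}\x R$ in physical space oriented along the surface normals. Transversality of the caps in $Q_1$ versus those in $Q_2$ controls the number of tube intersections per ball of radius $R^{1/2}$, producing, via $L^2$-orthogonality and a $TT^*$ argument, a bilinear $L^2\x L^2\to L^2$ bound on $R$-balls. Interpolation with the easy $L^\infty$ estimate and the induction-on-scales closure \`a la Tao \cite{T2} and Lee--Vargas should then yield the desired exponent $r/2>5/3$.

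The real difficulty, as emphasized in the abstract, is that our surface is scale-invariant only under the one-parameter parabolic scaling $(x,y)\mapsto(r^2x,ry)$ and \emph{not} under the hyperbolic rescaling $(x,y)\mapsto(rx,r^{-1}y)$ that renders the pure saddle $z=xy$ self-similar in the sense needed by \cite{lee05}, \cite{v05}. After rescaling a dyadic cap in $Q$ to unit scale, the perturbation $y^3/3$ therefore does not remain negligible in general; rather, its role depends on the location of the cap. Concretely, $\phi(x,y)$ behaves like the pure saddle $xy$ when $|x|\gg y^2$, whereas for $|x|\lesssim y^2$ the cubic term dominates and one of the two families of rulings possessed by the saddle is destroyed, so that the local bilinear geometry in this ``cubic-dominant'' regime is genuinely different from the saddle case.

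The main obstacle is then to construct a dyadic decomposition of $Q$ (and of pairs $Q_1\x Q_2$) together with adapted rescalings which treat both regimes uniformly while preserving the inductive structure. I expect that in the saddle-like regime $|x|\gg y^2$ the Lee/Vargas argument, modulo perturbative corrections, will close the induction; whereas in the cubic-dominant regime $|x|\lesssim y^2$ one will need new transversality estimates and wave packet arguments in which the single surviving family of rulings and the lack of hyperbolic scale invariance must be handled by hand. Gluing these two analyses into a single induction on scales --- in particular ensuring that the constants from the two regimes are compatible and that the losses incurred when passing between them can be absorbed --- will be the key technical difficulty to overcome.
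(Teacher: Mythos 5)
Your proposal correctly identifies the overall strategy (bilinear method, wave packets, induction on scales) and the central obstruction: the perturbed saddle $xy+y^3/3$ is not invariant under the hyperbolic rescaling $(x,y)\mapsto(\lambda_1 x,\lambda_2 y)$ that makes the pure saddle self-similar, so after localizing to a dyadic patch and rescaling, the cubic term can dominate. Your observation that the perturbation destroys one of the two families of rulings is also on point. But two parts of your plan are underspecified or go wrong in ways that matter.

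First, your proposed dichotomy $|x|\gg y^2$ versus $|x|\lesssim y^2$ is a decomposition of $Q$ by \emph{absolute position}, whereas what is actually needed is a Whitney-type decomposition of $Q\times Q$ governed by the two \emph{transversality functionals} $\tau_{z_1}(z_1,z_2)=x_2-x_1+y_2(y_2-y_1)$ and $\tau_{z_2}(z_1,z_2)=x_2-x_1+y_1(y_2-y_1)$ (see \eqref{TV2}--\eqref{TV3}). Unlike for the saddle, these two quantities need not be comparable; the identity $\tau_{z_1}-\tau_{z_2}=(y_2-y_1)^2$ shows that at least one of them is $\gtrsim|y_2-y_1|^2$, and the ratio of the smaller to this lower bound introduces a second parameter $\delta$ besides the $y$-separation $\rho$. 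The ``cubic-dominant'' regime corresponds to $\delta\ll1$, where the two transversalities are badly imbalanced. There the right set $U_2$ is not a square but a \emph{thin curved box} (a $\rho^2\delta$-neighborhood of a parabolic arc), and the natural rescaling $(x,y)\mapsto(\delta^{-1}x,y)$ which restores balanced transversality blows up the curvature of one of the two patches by a factor $\delta^{-1}$. This forces one to use wave packets of unequal lengths on the two sides (tubes of height $R^2$ for $U_1$ but $\delta R^2$ for $U_2$) and to run the induction on the modified scale $R'=R\delta$. Your write-up does not get at this two-parameter, curved-box structure, and without it you cannot even set up the bilinear estimate with the correct $\delta$-dependence (the key power $\delta^{7/2-6/p}$ in Theorem~\ref{bilinear}).

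Second, your claim that ``the standard bilinear-to-linear machinery --- a Whitney decomposition of $Q\times Q$, $\e$-removal, and interpolation against the trivial $L^\infty$ estimate --- will then promote such a bilinear bound to the linear claim'' is a genuine gap. For the perturbed saddle the classical almost-orthogonality in $L^p$ (the Tao--Vargas--Vega Lemma 6.1 argument) is \emph{insufficient} when applied directly to the curved boxes: the sets $U_1,U_2$ in an admissible pair no longer project onto separated rectangles in a single coordinate, and disjointness in the first two coordinates alone is not enough. One must further subdivide each curved box $U_2$ into $\delta\rho\times\delta\rho^2$ squares, exploit separation of $S_{1,i,j}+S_{2,i',j}^k$ in \emph{all three} frequency coordinates, apply a Rubio de Francia square-function estimate adapted to an almost-disjoint family of boxes (Lemmas~\ref{lpsquare}--\ref{squaref}), and finally reassemble via Khintchine. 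Without this refinement the $\delta$-losses do not sum; the exponent in the surviving power of $\delta$, $5-2/q-7/p$, is only barely positive for $p$ near $5/3$, so there is no slack to absorb a cruder orthogonality argument.
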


In the remaining  part of this section,  we shall describe our strategy of proof, and some
of the obstacles that have to be dealt with.

We are going to follow the bilinear approach, which is based on  bilinear estimates of the
form
\begin{align}\label{bil1}
 \|\ext_{U_1}(f_1)\,\ext_{U_2}(f_2)\|_p \leq C(U_1,U_2) \|f_1\|_2\|f_2\|_2.
\end{align}
 Here,  $\ext_{U_1}$ and $\ext_{U_2}$ are the Fourier extension operators associated to
 patches of sub-surfaces $S_i:=\graph \phi|_{U_i}\subset S,\ i=1,2,$  with $U_i\subset Q.$
 What is crucial for obtaining  useful bilinear estimates is that the two  patches of
 surface $S_1$ and $S_2$ satisfy certain {\it transversality conditions,} which  are
 stronger than  just assuming that $S_1$ and $S_2$ are transversal as hypersurfaces (i.e.,
 that all normals to $S_1$ are transversal to all normals to $S_2$). Indeed, what  is
 needed in addition is the following:
 \smallskip

Translate the patches $S_1$ and $S_2$ so that they intersect in a smooth curve. Then the
normals to, say,  $S_1$  for base points  varying along this intersection curve form a
cone
$\Gamma_1.$  What is needed in the bilinear argument is that all the normals  to the
surfaces $S_2$ pass  transversally  through this cone $\Gamma_1,$  and that the analogous
condition holds true with the roles  of $S_1$ and $S_2$ interchanged.

 For more details  on this condition, we refer to the corresponding literature dealing
 with
 bilinear estimates,  for instance \cite{lee05}, \cite{v05}, \cite{lv10}, or \cite{be16}.
In particular, according to Theorem 1.1 in \cite{lee05}, transversality is achieved if the
modulus  of the
following quantity
\begin{align}\label{transs}
\Gamma^\phi_{z}(z_1,z_2,z_1',z_2'):=	\left\langle
(H\phi)^{-1}(z)(\nabla\phi(z_2)-\nabla\phi(z_1)),\nabla\phi(z_2')-\nabla\phi(z_1')\right\rangle
\end{align}
is bounded from below for any $z_i=(x_i,y_i),\, z_i'\in U_i$, $i=1,2$, $z=(x,y)\in U_1\cup
U_2,$ $H\phi$
denoting the Hessian of $\phi$.
If this inequality holds, then we have \eqref{bil1} for $p>5/3,$ with a constant $C$ that
depends only
on an lower bound of (the modulus of) \eqref{transs}, and  on  upper bounds for the
derivatives of $\phi.$  If $U_1$ and
$U_2$ are sufficiently small (with sizes depending on upper bounds of the first and second
order derivatives of $\phi$ and a lower bound for the determinant of $H\phi$) this
condition  reduces to the estimate
\begin{align}
|\Gamma^\phi_{z}(z_1,z_2)|\geq c,
\end{align}
for $z_i=(x_i,y_i)\in U_i$, $i=1,2$, $z=(x,y)\in U_1\cup U_2$, where
\begin{align}\label{trans}
\Gamma^\phi_{z}(z_1,z_2):=	\left\langle
(H\phi)^{-1}(z)(\nabla\phi(z_2)-\nabla\phi(z_1)),\nabla\phi(z_2)-\nabla\phi(z_1)\right\rangle.
\end{align}

It is easy to check that for $\phi(x,y)=xy+y^3/3$, we have
\begin{eqnarray} \label{gammaz}
\Gamma^\phi_{z}(z_1,z_2)
	 &=& 2(y_2-y_1)[x_2-x_1+(y_1+y_2-y)(y_2-y_1)]\\
	 &=:& 2(y_2-y_1)\tau_{z}(z_1,z_2). \label{TV1}
\end{eqnarray}

This should be compared to the case of the non-perturbed hyperbolic paraboloid  (the
``saddle'')  $\phi_0(x,y)=xy,$ where we would simply  get $2(y_2-y_1)(x_2-x_1)$ in place
of
\eqref{TV1}.

Since $z=(x,y)\in U_1\cup U_2,$ it will be particularly important to look at the
expression
\eqref{TV1} when $z=z_1\in U_1,$ and $z=z_2\in U_2.$  As above, if $U_1$ and
$U_2$ are sufficiently small, we can actually reduce to this case. We then see that  for
our perturbed saddle, still  the difference $y_2-y_1$  in the  $y$-coordinates plays an
important role as for the unperturbed saddle, but  in place  of  the  difference $x_2-x_1$
in the  $x$-coordinates now the quantities
\begin{align}\label{TV2}
	\tau_{z_1}(z_1,z_2):=x_2-x_1+y_2(y_2-y_1)\\
	\tau_{z_2}(z_1,z_2):=x_2-x_1+y_1(y_2-y_1)  \label{TV3}
\end{align}
become relevant.   Observe also that
 \begin{align}\label{symtrans}
 	\tau_{z}(z_1,z_2)=-\tau_{z}(z_2,z_1).
\end{align}
It is important to notice that the  constants $C(U_1,U_2)$ in the bilinear estimates
\eqref{bil1} will strongly depend on the sizes of the quantities appearing in \eqref{TV1}
-
\eqref{TV3}, as well as on the size of the derivatives of $\phi.$
\smallskip

 In the case of the saddle, since the ``transversality'' is here given by
 $2(y_2-y_1)(x_2-x_1),$ it is natural to perform a kind of Whitney decomposition with
 respect  to the diagonal of  $Q\times Q$ into direct products  $U_1\times U_2$  of  pairs
 of bi-dyadic rectangles  $U_1,U_2$  of the same dimension $\lambda_1\times \lambda_2,$
 which are separated in each coordinate by a distance proportional to the  sizes
 $\lambda_1$  respectively $\lambda_2,$   and then apply a  scaling transformation of the
 form $(x,y)=(\lambda_1 x',\lambda_2 y')$ -- this is exactly what had been done in
 \cite{lee05} and \cite{v05}.  Since the phase $xy$ is homogeneous under such kind of
 scalings,  in the new coordinates $(x',y'),$  one has then reduced the bilinear estimates
 to the case of normalized patches $U_1,U_2$ of size $1\times 1$ for which the
 transversalities are also of size $1,$ and from there on one could essentially apply the
 ``uniform'' bilinear estimates (similar to those known from the elliptic
 case) and re-scale them to go back to the original coordinates.
\smallskip

Coming back to our perturbed saddle, we shall again try to scale in order to make both
transversalities become of size $\sim\pm1.$  However,  as it will turn out, the ``right''
patches $U_1,U_2$  to be used will in general  no longer be bi-dyadic rectangles,  but in
some case a dyadic {\it square} $U_1$ and a  bi-dyadic {\it curved  box} $U_2.$
The ``right''  scalings,  which  will reduce matters to situations where all
transversalities  are of size $\sim\pm1,$  will be anisotropic, and this will  create a
new problem: while the saddle is invariant under  such type of scaling (i.e., for the
function $\phi_0(x,y):=xy,$ we have $\frac1{\lambda_1\lambda_2}\phi_0(\frac
x{\lambda_1},\frac y{\lambda_2})=\phi_0(x,y)),$ for our function $\phi,$ the  scaled
function   $\phi^s(x,y):=\frac1{\lambda_1\lambda_2}\phi(\frac x{\lambda_1},\frac
y{\lambda_2})$ is given by
$$ \phi^s(x,y)=xy+\frac {\lambda_2^2 y^3}{3\lambda_1}.
$$
Thus, if $\lambda_1\gg \lambda_2^2,$ then the second term can indeed be view as a small
perturbation of the leading term $xy,$ and we can proceed in a very similar way as for the
saddle. However, if $\lambda_1\lesssim\lambda_2^2,$ then the second, cubic  term, can
assume
a dominant role, and the treatment of this case will require further arguments.

In conclusion, the na\"\i f approach which would try to treat our surface $S$ as a
perturbation of the saddle, and which does
indeed work for   Stein-Tomas type   $L^q-L^2$  restriction estimates,  breaks down if we
want to derive restriction estimates of more general type by means of the bilinear method.

\medskip
In order to get some better idea on how to  suitably devise  the ``right'' pairs  of
patches $U_1, U_2$  for our Whitney type decomposition, note that the quantities
$\tau_{z_2}(z_1,z_2)$ and $\tau_{z_1}(z_1,z_2)$ can be of quite  different  size. For
instance, for $z_1^0=(0,0)$ and $z_2^0=(-1+\delta,1),$ we  have
$\tau_{z_1^0}(z_1^0,z_2^0)=\delta$ while $\tau_{z_2^0}(z_1^0,z_2^0)=-1.$ Therefore there
can be  a strong imbalance in the two  transversalities for the perturbed saddle.
This is quite different from the situation of the saddle, where the two quantities
\eqref{TV2}, \eqref{TV3} are the same, namely $x_2-x_1.$

Nevertheless,  observe that, due to the following  important  relation  between the  two
transversalities
\begin{align}\label{TV2+TV3}
\tau_{z_1}(z_1,z_2)-\tau_{z_2}(z_1,z_2)=(y_2-y_1)^2
\end{align}
(which is immediate   from  \eqref{TV2}, \eqref{TV3}),
we see that  at least one of the two transversalities  $\tau_{z_1}(z_1,z_2)$ or
$\tau_{z_2}(z_1,z_2)$ cannot be smaller than $|y_2-y_1|^2/4.$
This naturally leads to two cases, which we will discuss in detail in the next chapter: Either
$\tau_{z_1}(z_1,z_2)$ and $\tau_{z_2}(z_1,z_2)$ have similar size, and thus both are not much smaller than
$|y_2-y_1|^2$, or they are of quite different size, in which case one of the two transversalities has to be
comparable to $|y_2-y_1|^2$, while the other one can be much smaller.

\smallskip
In Section \ref{sect:admissible} we shall make a finer analysis of the transversality
conditions and introduce
the pairs of sets fit to them, \it admissible pairs\rm. In Section \ref{sect:bilin} we shall
state and prove the
sharp bilinear estimates for those pairs. In Section \ref{whitn} we will use the admissible
pairs to build a
Whitney type decomposition of $Q\times Q.$

\smallskip
Further difficulties arise in the passage from bilinear to linear Fourier extension, which
will  be discussed in Section \ref{bilinlin}.  In order to exploit all  of the underlying
almost orthogonality, we shall have to further decompose the curved boxes $U_2$ (whenever
they appear) into smaller squares and also make use of some disjointness properties of the
corresponding pieces of the surface $S.$  In contrast to what is done in the case of
elliptic surfaces, as well as for the saddle, it turns out that for the perturbed saddle
it
is not sufficient to exploit disjointness properties with respect to the first two
coordinates, but also with respect to the third one. A further novelty is that we have to
improve on a by now standard   almost orthogonality  relation  in $L^p$ between  the
pieces
arising in our Whitney type decomposition, which needs to be employed before applying our
bilinear estimates to each of these pieces. As it turns out, this ``classical''  estimate
is insufficient for our  curved boxes, and we improve on it by applying a  classical
square
function estimate  associated to  partitions into rectangular boxes,   due to Rubio de
Francia, which has had its roots in a square function estimate  obtained independently by
L. Carleson \cite{c67}, and A. Cordoba \cite{co81}.
Finally, we can reassemble the smaller squares and pass back to curved boxes by means of
Khintchine's inequality.

\medskip

\noindent{\bf Guide to the reader:} The real thrust of the  precise definition of admissible pairs given  in
Subsection \ref{preciseadmissible} will become relevant in an essential way only later in Section
\ref{whitn}, when we shall show that our
admissible pairs $(U_1,U_2)$ will allow to perform a Whitney-type decomposition of  $Q\times Q.$ To a smaller
degree, they are also relevant  in Subsection \ref{scaling transform},  which prepares for the  reduction of our
general bilinear estimates for admissible pairs in Theorem \ref{bilinear2} (for the curved box case)    to the
crucial prototypical case  introduced in Subsection \ref{proto} and studied in  Theorem \ref{bilinear}.

For a first reading, we therefore suggest to skip Subsections \ref{preciseadmissible} and  \ref{scaling
transform}, as well as the reduction of Theorem \ref{bilinear2}  to Theorem \ref{bilinear} in Section
\ref{sect:bilin}, and first read Subsections \ref{TVS} and \ref{proto}, and then in Subsection \ref{bilinarg}
the  proof of the bilinear estimates of Theorem \ref{bilinear},  which deals with the  prototypical case,
before
coming back to Subsection \ref{preciseadmissible}.

\medskip
\noindent\textsc{Convention:}
Unless  stated otherwise, $C > 0$ will stand for an absolute constant whose value may vary
from occurrence to occurrence. We will use the notation $A\sim_c B$  to express  that
$\frac{1}{c}A\leq B \leq c A$.  In some contexts where the size of $c$ is irrelevant we
shall drop the index $c$ and simply write $A\sim B.$ Similarly, $A\lesssim B$ will express
the fact that there is a constant $c$ (which does not depend  on the relevant quantities
in
the estimate) such that $A\le c B,$  and we write  $A\ll B,$ if the constant $c$ is
sufficiently small.
\medskip

\begin{center}
{\textsc{Acknowledgments}}
\end{center}
Part of this work was developed during the stay of the second and third author at the
Mathematical Sciences Research Institute at Berkeley during the Harmonic Analysis Program
of 2017. They wish to express their gratitude to the organizers and to the Institute  and
its staff for their hospitality and for providing a wonderful  working atmosphere.
\smallskip

We would also like to express our sincere gratitude to the referee for  many valuable  suggestions which have
greatly helped
to improve the presentation of the material in this article.


\setcounter{equation}{0}
\section{Transversality conditions and admissible pairs of sets}\label{sect:admissible}

\subsection{Admissible pairs of sets $U_1,$ $U_2$ on which transversalities are of a fixed
size: an informal discussion}\label{pairs of sets}

Recall again from the introduction  that the crucial
``transversality quantities''  arising in
Lee's estimate \eqref{trans} are given by $y_2-y_1$ and \eqref{TV2}, \eqref{TV3}, i.e.,
\begin{align*}
	\tau_{z_1}(z_1,z_2):=x_2-x_1+y_2(y_2-y_1), \\
	\tau_{z_2}(z_1,z_2):=x_2-x_1+y_1(y_2-y_1).
\end{align*}

We shall therefore try to devise neighborhoods $U_1$ and $U_2$ of two given points
 $z_1^0=(x_1^0,y_1^0)$ and $z_2^0=(x_2^0,y_2^0)$ on which these quantities are roughly
  constant for $z_i=(x_i,y_i)\in U_i,$ $i=1,2$,
 and which  are also  essentially chosen as large as possible. The corresponding pair $(U_1,U_2)$ of
 neighborhoods of $z^0_1$ respectively $z^0_2$ will be called an {\it admissible pair}.

The goal of this subsection is to present  some of the basic ideas, without being precise about  details, such
as
constants that will be hidden in the arguments, in order to  motivate the precise definition of admissible pairs
that will be given  in the next subsection (which might otherwise appear   a bit  strange).

\medskip

 In a first step, we choose a large constant $C_0\gg 1$, which will be made precise only later,
 and assume that
 $|y^0_2-y^0_1|\sim C_0\rho$ for some $\rho>0.$  It is then natural to allow $y_1$  to vary on  $U_1$ and $y_2$
 on $U_2$  by at most $\rho$ from $y^0_1$ and
 $y^0_2,$ respectively, i.e., we shall assume that
  \begin{align*}
|y_i-y^0_i|\lesssim \rho, \qquad \text{for}\quad  z_i\in U_i,\, i=1,2,
\end{align*}
so that indeed
\begin{equation}\label{rhosize}
|y_2-y_1|\sim C_0\rho \qquad\text{for}\quad  z_i\in U_i,\, i=1,2.
\end{equation}

 Recall next  the  identity \eqref{TV2+TV3}, which in particular implies that
  \begin{align}\label{TV2+TV3'}
|\tau_{z^0_1}(z^0_1,z^0_2)-\tau_{z^0_2}(z^0_1,z^0_2)|{\sim C_0^2}\rho^2.
\end{align}
We begin with

\medskip

\noindent {\bf Case 1: Assume that $ |\tau_{z_1^0}(z_1^0,z_2^0)|\le |\tau_{z_2^0}(z_1^0,z_2^0)|.$ }
Let us then write
\begin{equation}\label{defdelta}
 |\tau_{z_1^0}(z_1^0,z_2^0)|= \rho^2\de,
\end{equation}
where $\de\ge 0.$  Note, however, that  obviously $ \rho^2\de\lesssim1.$  From \eqref{TV2+TV3'} one then easily
deduces that there are two subcases:

\medskip

\noindent {\bf Subcase 1(a): (the ``straight box'' case),} where  $|\tau_{z_1^0}(z_1^0,z_2^0)| \sim
|\tau_{z_2^0}(z_1^0,z_2^0)|,$  or, equivalently, $\de\gtrsim1.$ In this case,  also
$|\tau_{z_2^0}(z_1^0,z_2^0)|\sim
\rho^2\de.$

\medskip

\noindent {\bf Subcase 1(b): (the ``curved box'' case),} where  $|\tau_{z_1^0}(z_1^0,z_2^0)| \ll
|\tau_{z_2^0}(z_1^0,z_2^0)|,$  or, equivalently, $\de\ll 1.$ In this case,  $|\tau_{z_2^0}(z_1^0,z_2^0)|\sim
\rho^2.$

\medskip
Given $\rho$ and $\delta,$ we shall then want to devise $U_1$ and $U_2$ so that the same kind of conditions hold
for all $z_1\in U_1$ and $z_2\in U_2,$
i.e.,
$$
 |\tau_{z_1}(z_1,z_2)|\sim \rho^2\de,\text{  and} \quad  |\tau_{z_2}(z_1,z_2)|\sim \rho^2(1\vee \de).
$$
Note that in view of  \eqref{TV2+TV3} and \eqref{rhosize} the second condition is redundant,
and so the  only additional condition that needs  to be satisfied  is that, for all
$z_1=(x_1,y_1)\in
U_1$ and $z_2=(x_2,y_2)\in U_2,$ we have
\begin{align*}
|\tau_{z_1}(z_1,z_2)|=|x_2-x_1+y_2(y_2-y_1)|\sim \rho^2\de.
\end{align*}
{As said before, }we want to choose $U_2$  as large  as possible w.r.  to $y_2,$ i.e., we only assume  that
$|y_2-y_2^0|\lesssim\rho.$ Let
\begin{equation}\label{a0}
a^0:=\tau_{z_1^0}(z_1^0,z_2^0)=x^0_2-x^0_1+y^0_2(y^0_2-y^0_1),
\end{equation} so  that $|a^0|\sim
\rho^2\de.$ Then we shall assume that {for $z_2\in U_2$} we have, say,
$|\tau_{z_1^0}(z_1^0,z_2)-\tau_{z_1^0}(z_1^0,z_2^0)|=|\tau_{z_1^0}(z_1^0,z_2)-a^0|\ll \rho^2\de.$  This means
that we shall choose $U_2$ to be
of the form
\begin{equation}\label{U2def}
U_2=\{(x_2,y_2): |y_2-y_2^0|\lesssim \rho,\ |x_2-x^0_1+y_2(y_2-y_1^0)- a^0|\ll \rho^2\de\}.
\end{equation}

As for $U_1,$ given our choice of $U_2,$ what we  still need is that
$|\tau_{z_1}(z_1,z_2)-\tau_{z_1^0}(z_1^0,z_2)|\ll \rho^2\de$ for all $z_1\in U_1$ and $z_2\in U_2,$ for then
also
$|\tau_{z_1}(z_1,z_2)-\tau_{z_1^0}(z_1^0,z_2^0)|\ll \rho^2\de$ for all such $z_1, z_2.$ We therefore  must
require that  $|x_1-x_1^0+y_2(y_1-y_1^0)|\ll \rho^2\de$ on $U_1\times U_2.$
But, since $y_2$ is allowed to vary within an interval of  size $\rho,$  we
 see that this requires a condition of the form $\rho|y_1-y_1^0|\ll\rho^2\de.$

Assuming this,
 we next  see that we are allowed to replace $y_2$ in the condition $|x_1-x_1^0+y_2(y_1-y_1^0)|\ll \rho^2\de$
 by
 $y_1^0.$ This  leads to the condition  $|x_1-x_1^0+y_1^0(y_1-y_1^0)|\ll \rho^2\de,$ which  depends on $z_1$
 only
 and thus gives our  second conditions on $U_1.$

 Our discussion suggests that we should finally choose $U_1$  of the form
\begin{equation}\label{U1def}
U_1=\{(x_1,y_1): |y_1-y_1^0|\lesssim \rho(1\wedge \de), |x_1-x_1^0+y_1^0(y_1-y_1^0)|\ll
\rho^2\de\}.
\end{equation}

\noindent{\bf Note:}  $U_1$ is essentially the affine image of  a rectangular box of dimension $\rho^2\de \times
\rho(1\wedge \de).$  However, when $\de\ll 1,$ then $U_2$ is a thin curved box, namely the segment of a
$\rho^2\de$-neighborhood of a parabola lying within the  horizontal strip where $|y_2-y_2^0|\lesssim \rho.$ On
the other hand, when $\de\gtrsim 1,$   then it is easily  seen that  $U_2$ is essentially a rectangular box  of
dimension $\rho^2\de\times \rho.$ This explains  why we  called Subcase 1(b)  where $\de\ll1$  the ``curved box
case'', and   Subcase 1(a)  where $\de\gtrsim 1$  the ``straight  box case.''

\medskip
\noindent {\bf Case 2: Assume that $|\tau_{z_1^0}(z_1^0,z_2^0)|\ge |\tau_{z_2^0}(z_1^0,z_2^0)|.$ }

This case  can easily be reduced to the previous one by symmetry. Indeed, in view of \eqref{symtrans}, we just
need to interchange the roles of $z_1$ and $z_2$ in the previous discussion, so that it is natural here to
define
an {\it admissible pair} $(\tilde U_1,\tilde U_2)$  {\it of type 2}  of neighborhoods $\tilde U_1$ of $z^0_1$
respectively $\tilde U_2$ of $z^0_2$ by setting

\begin{eqnarray}\label{tildeU}
  \begin{split}
\tilde U_1=\{(x_2,y_2): |y_1-y_1^0|\lesssim \rho,\ |x_1-x^0_2+y_1(y_1-y_2^0)- \tilde a^0|\ll \rho^2\de\},\\
\tilde U_2=\{(x_1,y_1): |y_2-y_2^0|\lesssim \rho(1\wedge \de), |x_2-x_2^0+y_2^0(y_2-y_2^0)|\ll
\rho^2\de\},
 \end{split}
\end{eqnarray}
where $\tilde a^0:=\tau_{z_2^0}(z_2^0,z_1^0).$

\bigskip

\medskip

\subsection{Precise definition of admissible pairs within $Q\times Q$}\label{preciseadmissible}
In view of our discussion in the previous subsection, we shall here  devise  more precisely
certain \lq\lq dyadic''    subsets   of $Q\times Q$ which will
assume the roles of the sets $U_1,$ respectively $U_2, $ in such a way  that on every pair of such sets each of
our transversality functions  is essentially of some fixed dyadic size, and which will moreover  lead to  a kind
of Whitney decomposition of $Q\times Q$ (as will be shown in  Section \ref{whitn})

To begin with, as before we fix a large dyadic constant $C_0\gg1.$

\smallskip
In  a first step,  we   perform a  classical {\bf dyadic decomposition in the
$y$-variable}
which is a variation of the one in \cite{TVV}:
For a given  dyadic number $0<\rho\lesssim 1,$  we denote for
$j\in\mathbb Z$ such that $|j|\rho\le 1$  by $I_{j,\rho}$ the dyadic interval
$I_{j,\rho}:=[j\rho,j\rho+\rho)$ of length $\rho,$ and by $V_{j,\rho}$ the corresponding
horizontal ``strip''   $V_{j,\rho}:=[-1,1]\times I_{j,\rho}$ within $Q.$
Given two dyadic intervals $J,\,J'$ of the same size, we say that they are
{\it related} if their parents are adjacent but they are not adjacent. We divide each
dyadic interval $J$ in a disjoint union of dyadic subintervals $\{I_J^k\}_{1\le k\le
C_0/8},$ of length  $8|J|/C_0.$  Then, we define $(I,I')$ to be an \it
admissible pair of dyadic intervals \rm if and only if there are $J$ and $J'$ related
dyadic intervals and $1\le k,\,j\le C_0/8$ such that $I=I_J^k$ and $I'=I_{J'}^j.$

We say that a pair of strips $(V_{j_1,\rho},V_{j_2,\rho})$ is {\it admissible } and  write
$V_{j_1,\rho}\backsim V_{j_2,\rho},$ if  $(I_{j_1,\rho},I_{j_2,\rho})$ is a pair of
admissible dyadic intervals. Notice that in this case,
  \begin{align}\label{admissibleV}
C_0/8< |j_2-j_1|< C_0/2.
\end{align}
One can  easily see that  this leads to the following disjoint decomposition of $Q\times
Q:$
\begin{align}\label{whitney1}
Q\times Q= \overset{\cdot}{\bigcup\limits_{\rho}}
\,\Big(\overset{\cdot}{\bigcup\limits_{V_{j_1,\rho}\backsim
V_{j_2,\rho}}}V_{j_1,\rho}\times V_{j_2,\rho}\Big),
\end{align}
where the first union is meant to be over all such dyadic $\rho$'s.

\medskip
In a second step, we perform a non-standard {\bf Whitney type decomposition of any given
admissible pair of strips}, to obtain subregions in which the transversalities are roughly
constant.

To simplify notation, we fix $\rho$ and an admissible pair  $(V_{j_1,\rho},V_{j_2,\rho}),$
and simply write $I_i:=I_{j_i,\rho},\, V_i:=V_{j_i,\rho}, \, i=1,2,$
so that $I_i$ is an interval of length $\rho$ with left endpoint  $j_i\rho,$   and
\begin{align}\label{Vi}
V_1=[-1,1]\times I_1, \qquad V_2=[-1,1]\times I_2,
\end{align}
are rectangles of dimension $2\times \rho,$ which are vertically  separated at scale
$C_0\rho.$
More precisely, for $z_1=(x_1,y_1)\in V_1$ and $z_2=(x_2,y_2)\in V_2$ we have
$|y_2-y_1|\in
|j_2\rho-j_1\rho|+[-\rho,\rho],$ i.e.,
\begin{align}\label{yseparation}
	C_0\rho/2\le |y_2-y_1|\le  C_0\rho.
\end{align}

Let $0<\delta\lesssim \rho^{-2}$ be a dyadic number (note that $\delta$ could be big,
depending on $\rho$), and  let $\I$ be  the set of points
which partition the interval $I$ into (dyadic) intervals of the same length  $\rho^2\de.$

\smallskip
Similarly, for $i=1,2,$ we choose a finite  equidistant partition $\I_i$  of  width
$\rho(1\wedge\delta)$  of  the interval $I_i$  by  points $y_i^0\in \I_i.$
Note: if $\de>1,$ then $\rho(1\wedge\delta)=\rho,$ and we can choose for $\I_i$ just
 the
 singleton  $\I_i=\{y_i^0\},$ {where $y_i^0$ is the left endpoint of $I_i.$}

 \medskip

\begin{definr}
For any parameters $x^0_1,t^0_2\in\I,$  $y^0_1\in\I_1$  defined in the previous lines and $y^0_2$ the left
endpoint of $I_2,$ we  define the
sets
\begin{eqnarray}\label{whitneybox1}
 \begin{split}
&U_1^{x^0_1,y_1^0,\delta} :=\{(x_1,y_1): 0\le y_1-y_1^0< \rho(1\wedge\delta),\,
0\le x_1-x^0_1+y_1^0(y_1-y_1^0)< \rho^2\delta \}, \\\\
&U_2^{t^0_2,y_1^0,y^0_2,\delta} :=\{(x_2,y_2): 0\le y_2-y^0_2<\rho, 0\le x_2-t^0_2+y_2(y_2-y_1^0)<
\rho^2\delta\},
\end{split}
\end{eqnarray}
and the points
\begin{equation}\label{pointsinU}
z^0_1:=(x^0_1,y^0_1), \qquad z^0_2=(x^0_2,y^0_2):=(t_2^0-y_2^0(y_2^0-y_1^0), y_2^0).
\end{equation}
\end{definr}

Observe that then
$$
z^0_1\in U_1^{x^0_1,y_1^0,\delta}\subset V_1\quad \text{  and   } \quad z^0_2\in
U_2^{t^0_2,y_1^0,y^0_2,\delta}\subset V_2.
$$
Indeed, $z^0_i$ is in some sense the ``lower left'' vertex of $U_i,$ and the horizontal projection of
$U_2^{t^0_2,y_1^0,y^0_2,\delta}$ equals $I_2.$
Moreover, if we define $a_0$ by \eqref{a0}, we have that $x_1^0+a^0=t^0_2,$ so that our definitions of the sets
$U_1^{x^0_1,y_1^0,\delta}$ and $U_2^{t^0_2,y_1^0,y^0_2,\delta}$ are very close to the ones for the  sets $U_1$
and
$U_2$ (cf. \eqref{U1def}, \eqref{U2def}) in the previous subsection.

 In particular, $U_1^{x^0_1,y_1^0,\delta}$ is  again essentially a paralellepiped of sidelengths $\sim
 \rho^2\de\times \rho(1\wedge \de),$ containing the point $(x^0_1,y_1^0),$  whose
longer side has slope $y_1^0$ with respect to the $y$-axis.
 Similarly, if $\de\ll 1,$
then $U_2^{t^0_2,y_1^0,y^0_2\delta}$ is a thin curved box of width $\sim\rho^2 \de$ and
length $\sim\rho,$  contained in a rectangle of dimension $\sim \rho^2\times \rho$ whose axes are
parallel to the coordinate axes (namely the part of a $\rho^2\delta$-neighborhood of a parabola containing the
point $(x^0_1,y^0_1)$ which lies  within the horizontal strip  $V_2$).   If $\de\gtrsim 1,$ then
$U_2^{t^0_2,y_1^0,y^0_2\delta}$ is essentially  a rectangular  box of dimension $\sim \rho^2\de\times \rho$
lying
in the same horizontal strip.

Note also that we have chosen to use  the parameter $t^0_2$ in place of using $x^0_2$ here, since with this
choice the identity
\begin{equation}\label{t2mean}
\tau_{z_1^0}(z_1^0,z_2^0)=t^0_2-x_1^0
\end{equation}
holds true, which will become quite useful in the sequel. We next have to relate the parameters $x_1^0,t^0_2,
y_1^0,y_2^0$ in order to give a precise definition of an admissible pair.

\smallskip
 Here, and in the sequel, we shall always assume that the points $z_1^0,z_2^0$ associated to these parameters
 are given by \eqref{pointsinU}.

\smallskip

\begin{definr}
Let us call a pair
 $(U_1^{x_1^0,y_1^0,\delta},U_2^{t_2^0,y_1^0,y_2^0,\delta})$ an {\it admissible
 pair of type 1
 (at scales $\de,\;\rho$ and contained in $V_1\times V_2$),} if the following two conditions hold
 true:
\begin{align}\label{admissible1}
\frac {C_0^2}4\rho^2\de\le
|\tau_{z_1^0}(z_1^0,z_2^0)|&=|t_2^0-x_1^0|<4 \,C_0^2\rho^2\de,\\
\frac{C_0^2}{512}\rho^2(1\vee
\de)\le|\tau_{z_2^0}(z_1^0,z_2^0)|&< 5\,
C_0^2\rho^2(1\vee \de).\label{admissible2}
\end{align}
By $\cP^{\de}$ we shall denote the set of all admissible pairs of
type 1 at scale $\de$ (and  $\rho$, contained in $V_1\times V_2,$), and by $\cP$
the  corresponding union over all dyadic
scales $\de.$ 
\end{definr}

Observe that, by \eqref{TV2+TV3}, we have
$\tau_{z_2^0}(z_1^0,z_2^0)=\tau_{z_1^0}(z_1^0,z_2^0)-(y_2^0-y_1^0)^2.$
In view of   \eqref{admissible1} and \eqref{yseparation} this shows that condition
\eqref{admissible2} is automatically satisfied, unless $\de\sim 1.$

We remark that it would indeed be more appropriate to denote the sets $\cP^{\de}$ by $\cP^{\de}_{V_1\times V_2},$  but we
want to simplify the notation. In all instances in the rest of the paper $\cP^{\de}$ will be  associated  to a
fixed admissible pair of strips $(V_1,V_2),$ so  that our imprecision will not cause any ambiguity.

\begin{lemnr}\label{sizeofdeltas}
If  $(U_1^{x_1^0,y_1^0,\delta},U_2^{t_2^0,y_1^0,y_2^0,\delta})$ is an admissible
pair of type 1, then for
all  $(z_1,z_2)\in (U_1^{x_1^0,y_1^0,\delta},U_2^{t_2^0,y_1^0,y_2^0,\delta})$ ,
$$
|\tau_{z_1}(z_1,z_2)|\sim_{8} C_0^2 \rho^2\de\mbox{   and   }
|\tau_{z_2}(z_1,z_2)|\sim_{1000} C_0^2 \rho^2(1\vee\de).
$$
\end{lemnr}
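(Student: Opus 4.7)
The plan is a direct computation: express both transversalities at an arbitrary pair $(z_1,z_2)\in U_1^{x_1^0,y_1^0,\delta}\times U_2^{t_2^0,y_1^0,y_2^0,\delta}$ as a perturbation of their values at the base points $(z_1^0,z_2^0)$, and then argue that the perturbation is dominated by the main term provided $C_0$ is chosen sufficiently large. Observe that the leading transversalities are of order $C_0^2\rho^2\delta$ or $C_0^2\rho^2$ by \eqref{admissible1} and \eqref{admissible2}, whereas the error terms produced by varying within $U_1,U_2$ turn out to grow only linearly in $C_0$, so the ratio of error to main term is absorbed by choosing $C_0$ large.

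First I would set up notation adapted to the definitions \eqref{whitneybox1}. For $z_i=(x_i,y_i)\in U_i$, introduce the deviations
\begin{align*}
u_1&:=x_1-x_1^0+y_1^0(y_1-y_1^0)\in[0,\rho^2\delta),\\
u_2&:=x_2-t_2^0+y_2(y_2-y_1^0)\in[0,\rho^2\delta),
\end{align*}
together with $|y_1-y_1^0|<\rho(1\wedge\delta)$ and $|y_2-y_2^0|<\rho$. Substituting $x_1=x_1^0-y_1^0(y_1-y_1^0)+u_1$ and $x_2=t_2^0-y_2(y_2-y_1^0)+u_2$ into $\tau_{z_1}(z_1,z_2)=x_2-x_1+y_2(y_2-y_1)$ and using \eqref{t2mean}, the telescoping yields after cancellation
\[
\tau_{z_1}(z_1,z_2)=(t_2^0-x_1^0)+(u_2-u_1)+(y_1^0-y_2)(y_1-y_1^0).
\]
Now $|u_2-u_1|<\rho^2\delta$, and from \eqref{yseparation} combined with $|y_2-y_2^0|<\rho$ we get $|y_1^0-y_2|\le(C_0+1)\rho\le 2C_0\rho$; since also $|y_1-y_1^0|<\rho(1\wedge\delta)\le\rho\delta$, the last summand is bounded by $2C_0\rho^2\delta$. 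Altogether the perturbation of $\tau_{z_1}(z_1,z_2)$ away from $t_2^0-x_1^0$ is at most $(2C_0+1)\rho^2\delta$. Combined with the lower bound $|t_2^0-x_1^0|\ge C_0^2\rho^2\delta/4$ from \eqref{admissible1}, choosing $C_0$ large enough that $C_0^2/4-(2C_0+1)\ge C_0^2/8$ (e.g.\ $C_0\ge 20$) produces $\tfrac18 C_0^2\rho^2\delta\le|\tau_{z_1}(z_1,z_2)|\le 5C_0^2\rho^2\delta\le 8C_0^2\rho^2\delta$, which is the first claimed $\sim_8$ estimate.

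For the second transversality I would use the identity \eqref{TV2+TV3} to relate $\tau_{z_2}$ to $\tau_{z_1}$, writing
\[
\tau_{z_2}(z_1,z_2)-\tau_{z_2^0}(z_1^0,z_2^0)=\bigl[\tau_{z_1}(z_1,z_2)-(t_2^0-x_1^0)\bigr]-\bigl[(y_2-y_1)^2-(y_2^0-y_1^0)^2\bigr].
\]
The first bracket was already controlled by $(2C_0+1)\rho^2\delta$. For the second, factor as the product of sum and difference: the sum is bounded by $2C_0\rho$ by \eqref{yseparation}, while the difference $(y_2-y_2^0)-(y_1-y_1^0)$ is bounded by $2\rho$, giving a bound of $4C_0\rho^2$. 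Thus the total error is at most $(2C_0+1)\rho^2\delta+4C_0\rho^2\le(6C_0+1)\rho^2(1\vee\delta)$. Together with the lower bound $|\tau_{z_2^0}(z_1^0,z_2^0)|\ge C_0^2\rho^2(1\vee\delta)/512$ from \eqref{admissible2} and the matching upper bound $5C_0^2\rho^2(1\vee\delta)$, the triangle inequality yields $|\tau_{z_2}(z_1,z_2)|\sim_{1000}C_0^2\rho^2(1\vee\delta)$ once $C_0$ is taken large enough that $C_0^2/512-(6C_0+1)\ge C_0^2/1000$.

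There is no real obstacle beyond bookkeeping: the construction of admissible pairs is calibrated precisely so that the $C_0^2$-scale of the main transversalities dominates the $C_0$-scale of the errors, and the only delicate point is to keep track of the constants so that the final ratios fall within the stated factors $8$ and $1000$ (which fixes the meaning of ``$C_0\gg 1$'' implicitly used throughout).
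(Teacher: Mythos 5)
Your proof is correct and follows essentially the same route as the paper: you derive the same telescoping identity for $\tau_{z_1}(z_1,z_2)$ around $t_2^0-x_1^0$ (the paper's \eqref{tauid}), bound each deviation linearly in $C_0$, and pass to $\tau_{z_2}$ via the same factorization of the difference of squares coming from \eqref{TV2+TV3}. The minor discrepancies in numerical constants (e.g.\ $2C_0+1$ versus $2C_0$) are immaterial given $C_0\gg1$.
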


\begin{proof}
Note that
\begin{eqnarray}\label{tauid}
&&\tau_{z_1}(z_1,z_2)=x_2-x_1+y_2(y_2-y_1)=t_2^0-x_1^0\\
&&\hskip0.5cm+ \big[x_2-t_2^0+y_2(y_2-y_1^0)\big]-
\big[x_1-x_1^0+y_1^0(y_1-y_1^0)\big]+(y_1^0-y_2)(y_1-y_1^0),
\nonumber
\end{eqnarray}
where, by \eqref{yseparation} and our definition of $U_1^{x_1^0,y_1^0,\delta}$ and
$U_2^{t_2^0,y_1^0,y_2^0,\delta},$ we have
$|x_2-t_2^0+y_2(y_2-y_1^0)|<\rho^2\de,\  \big|(x_1-x_1^0)+y_1^0(y_1-y_1^0)\big|<\rho^2\de$
and
   $|(y_1^0-y_2)(y_1-y_1^0)|<C_0\rho \cdot\rho(1\wedge\delta)\le C_0\rho^2\de.$
This shows that
\begin{align}\label{tausize1}
|\tau_{z_1}(z_1,z_2)-\tau_{z_1^0}(z_1^0,z_2^0)|\le 2C_0
\rho^2\de,
 \end{align}
 and in particular in combination with \eqref{admissible1} that
 $|\tau_{z_1}(z_1,z_2)|\sim_{8} C_0^2 \rho^2\de,$
if we choose $C_0$ sufficiently large.

 Similarly,  because of \eqref{TV2+TV3'}, we have
$$
\tau_{z_2}(z_1,z_2)-\tau_{z_2^0}(z_1^0,z_2^0)=\tau_{z_1}(z_1,z_2)-\tau_{z_1^0}(z_1^0,z_2^0)
-(y_2-y_1)^2+(y_2^0-y_1^0)^2,
$$
where
\begin{align*}
|-(y_2-y_1)^2+(y_2^0-y_1^0)^2|&=|(y_2^0-y_2)+(y_1-y_1^0)|\,|(y_2^0-y_1^0)+(y_2-y_1)|\\
&\le 2\rho \cdot 2 C_0\rho=4 C_0\rho^2.
\end{align*}

In combination with \eqref{tausize1} this implies
\begin{align}\label{tausize2}
|\tau_{z_2}(z_1,z_2)-\tau_{z_2^0}(z_1^0,z_2^0)|\le
6C_0 \rho^2(1\vee\de).
 \end{align}
Invoking also  \eqref{admissible2} this implies $|\tau_{z_2}(z_1,z_2)|\sim_{1000} C_0^2
\rho^2(1\vee\de).$
\end{proof}

\medskip

Finally, as in Case 2 of  the previous subsection, we also need to consider the symmetric case and define
admissible pairs where
$|\tau_{z_1}(z_1,z_2)|\gtrsim|\tau_{z_2}(z_1,z_2)|.$ By
interchanging the roles of $z_1$ and $z_2$ (compare also with \eqref{symtrans})  we define
accordingly  for any $t^0_1, x^0_2\in\I,$  $y^0_1$ the left endpoint of $I_1$ and $y^0_2\in\I_2$  the sets
\begin{align*}
	{\tilde U}_1^{t^0_1,y_1^0,y_2^0,\delta} &:=\{(x_1,y_1): 0\le y_1-y^0_1<\rho, 0\le x_1-t^0_1+y_1(y_1-y_2^0)<
\rho^2\delta\},\\
	{\tilde U_2}^{x^0_2,y_2^0,\delta} &:=\{(x_2,y_2): 0\le y_2-y_2^0< \rho(1\wedge\delta),\,
0\le x_2-x^0_2+y_2^0(y_2-y_2^0)< \rho^2\delta \}.
		\end{align*}
The corresponding points $z^0_1\in {\tilde U}_1^{t^0_1,y_1^0,y_2^0,\delta}$ and $z^0_2\in {\tilde
U_2}^{x^0_2,y_2^0,\delta}$ are here defined by
$$
 z^0_1=(x^0_1,y^0_1):=(t_1^0-y_1^0(y_1^0-y_2^0), y_1^0), \qquad z^0_2:=(x^0_2,y^0_2).
$$

In analogy to our previous definition,  if the  conditions
$$
\frac {C_0^2}4\rho^2\de\le
|\tau_{z_2^0}(z_1^0,z_2^0)|=|x_2^0-t_1^0|<4 \,C_0^2\rho^2\de$$
 (in place of \eqref{admissible1}) and
$$
C_0^2\rho^2(1\vee \de)/512\le|\tau_{z_1^0}(z_1^0,z_2^0)|< 5
C_0^2\rho^2(1\vee \de)
$$ (in place of \eqref{admissible2}) are satisfied, we shall call the
pair $({\tilde U}_1^{t^0_1,y_1^0,y_2^0,\delta} , {\tilde U_2}^{x^0_2,y_2^0,\delta})$ an
{\it
admissible pair of type 2 (at scales  $\de,\;\rho$ and  contained in $V_1\times V_2$)\rm}. Note that this means
that $({\tilde U_2}^{x^0_2,y_2^0,\delta}, {\tilde U}_1^{t^0_1,y_1^0,y_2^0,\delta})$ is an
admissible pair of type 1  in $V_2\times V_1$ at the same scales.
\medskip
By $\tilde \cP^{\de},$ we shall denote the set of all admissible pairs of
 type 2  at scale $\de$  (and $\rho$, contained in $V_1\times V_2,$), and by $\tilde \cP$
the  corresponding unions over all dyadic
scales $\de.$ 
\medskip

In analogy with Lemma \ref{sizeofdeltas}, we have 
\begin{lemnr}\label{sizeofdeltas2}
If $(\tilde U_1,\tilde U_2)=({\tilde U}_1^{t^0_1,y_1^0,y_2^0,\delta},{\tilde U_2}^{x^0_2,y_2^0,\delta})\in \tilde \cP^\de$  is an admissible pair of type 2, then for all $(z_1,z_2)\in(\tilde U_1,\tilde U_2)$ we have
$$
|\tau_{z_1}(z_1,z_2)|\sim_{1000} C_0^2 \rho^2(1\vee\de)\mbox{   and   }
|\tau_{z_2}(z_1,z_2)|\sim_8 C_0^2 \rho^2\de.
$$
\end{lemnr}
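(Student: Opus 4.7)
The plan is to reduce this directly to Lemma \ref{sizeofdeltas} via the symmetry between type 1 and type 2 admissible pairs, using the antisymmetry relation \eqref{symtrans}.

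First, I would invoke the observation explicitly noted in the text just above the lemma: if $(\tilde U_1,\tilde U_2)=({\tilde U}_1^{t^0_1,y_1^0,y_2^0,\delta},{\tilde U_2}^{x^0_2,y_2^0,\delta})$ is admissible of type 2 in $V_1\times V_2$ at scales $\delta,\rho$, then by construction the swapped pair $(\tilde U_2,\tilde U_1)$ is admissible of type 1 in $V_2\times V_1$ at the same scales, with the reference points swapped accordingly. Indeed, the type 2 conditions placed on $|\tau_{z_2^0}(z_1^0,z_2^0)|$ and $|\tau_{z_1^0}(z_1^0,z_2^0)|$ are exactly the type 1 conditions \eqref{admissible1} and \eqref{admissible2} after swapping the labels $1\leftrightarrow 2$.

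Next, I would apply Lemma \ref{sizeofdeltas} to the pair $(\tilde U_2,\tilde U_1)$ considered as a type 1 admissible pair in $V_2\times V_1$. For any $(z_2,z_1)\in \tilde U_2\times \tilde U_1$ this yields
\begin{align*}
|\tau_{z_2}(z_2,z_1)|&\sim_{8} C_0^2\rho^2\delta,\\
|\tau_{z_1}(z_2,z_1)|&\sim_{1000} C_0^2\rho^2(1\vee\delta).
\end{align*}

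Finally, I would apply the antisymmetry \eqref{symtrans}, namely $\tau_{z}(z_1,z_2)=-\tau_{z}(z_2,z_1)$, which gives $|\tau_{z_1}(z_1,z_2)|=|\tau_{z_1}(z_2,z_1)|$ and $|\tau_{z_2}(z_1,z_2)|=|\tau_{z_2}(z_2,z_1)|$. Substituting into the two estimates above produces exactly the claimed bounds for all $(z_1,z_2)\in \tilde U_1\times \tilde U_2$. No step here is a genuine obstacle; the only delicate point is bookkeeping, namely checking that the parameter convention for type 2 (where $z_1^0,z_2^0$ are defined by the symmetric formulas) really does turn into the parameter convention for type 1 under the swap, so that Lemma \ref{sizeofdeltas} applies verbatim. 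Since the paper has already set up the definitions precisely so that this identification is valid, the proof reduces to citing the previous lemma and \eqref{symtrans}.
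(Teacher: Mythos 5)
Your proposal is correct and coincides with the argument the paper intends: the paper states Lemma \ref{sizeofdeltas2} without proof, appealing to the analogy with Lemma \ref{sizeofdeltas}, and it explicitly notes just before the lemma that $({\tilde U_2}^{x^0_2,y_2^0,\delta}, {\tilde U}_1^{t^0_1,y_1^0,y_2^0,\delta})$ is a type 1 admissible pair in $V_2\times V_1$ at the same scales. Your write-up simply makes the symmetry reduction via \eqref{symtrans} explicit, which is exactly the intended route.
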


\subsection{The exact transversality conditions}\label{TVS}

In the curved box case, i.e., when $\de \ll 1,$   it  turns out that  one cannot directly reduce the bilinear
Fourier extension estimates over admissible pairs  to Lee's  Theorem 1.1 in \cite{lee05}, since that would not
give us the optimal  dependence on $\delta$.   We shall  therefore
have to be more precise about the required transversality conditions. So, let us   recall in more detail the
exact  transversality conditions mentioned in the Introduction  that we need for the bilinear  argument.  As
references to this (by now
standard) argument we refer for instance to \cite{lee05}, and  \cite{v05}.
\medskip

In  this bilinear argument, we assume that  we are given  two  patches of subsurfaces
$S_i=\{(z_i,\phi(z_i)): z_i\in U_i\}, i=1,2,$ of $S.$ For fixed  points $z_1'\in U_1$  and
$z_2'\in U_2,$ we consider the translated surfaces
$$
\tilde S_1:=S_1+(z_2',\phi(z_2'))\mbox{ and } \tilde S_2:=S_2+(z_1',\phi(z_1')).
$$
We will assume in this subsection that there is a constant $C>0$ such that
$|\nabla\phi(z)|\le C$ for all $z\in U_1\cup U_2.$ The implicit constants in the argument
will depend on this $C.$ We will not make any assumption about other derivatives of $\phi$
and will keep track of the dependence on them of the transversalities. That will be
important in the rest of the section.

\medskip

If the normals to these two  surfaces $S_i$ at the  points $(z_i',\phi(z_i')),$ $i=1,2,$
 are not parallel, we can locally define the {\it intersection curve}
$$
\Pi_{z_1',z_2'}:=\tilde S_1\cap\tilde
S_2=[S_1+(z_2',\phi(z_2'))]\cap[S_2+(z_1',\phi(z_1'))].
$$
Note that
$$
\tilde S_1=\{(z,\phi(z-z_2')+\phi(z_2')): z\in U_1+z_2'\} \mbox{ and }\tilde
S_2=\{(z,\phi(z-z_1')+\phi(z_1')): z\in U_2+z_1'\}.
$$
Set $\psi(z):= \phi(z-z_1')+\phi(z_1')-\phi(z-z_2')-\phi(z_2').$ Then, the orthogonal
projection of the curve $\Pi_{z_1',z_2'}$ on the $z$ - plane  is the  curve  given by
$\{z:\;\psi(z)=0\}.$
We introduce a parametrization by arc length $\gamma(t),\, t\in  J,$  of this curve, where
$t$ is from an  open interval $ J.$
Notice that $\gamma(t)$ depends on the choices of $z_1'$ and $z_2'.$
By $N(x,y)$ we denote the following normal to our surface $S$  at $(x,y,\phi(x,y))\in S:$
$N(x,y):=
\left(
\begin{array}{c}
  \trans\nabla\phi(x,y)   \\
-1
\end{array}
\right).
$
Note that these normal vectors are of size $|N(x,y)|\sim 1.$
Then  the vector $N_2(z):=N(z-z_1')$ is normal to the translated surface $\tilde S_2$ at
the point $(z,\phi(z_1-z_1')+\phi(z_1')),$  and we  consider the ``cone of normals of type
2 along the intersection curve''  $\Gamma_2:=\{sN_2(\gamma(t)):  s\in \mathbb R,\,t\in
J\}.$ In an analogous way, we define the ``cone $\Gamma_1$ of normals of type 1 along the
intersection curve''.

In the bilinear argument (see, for instance, \cite{v05}, final remark on page 110),  the
condition which  is needed is  that   the normal vectors to $S$ at all points of $S_1$ are
transversal to  the cone $\Gamma_2,$ more precisely that
\begin{align}\label{transv}
\left|\det\left(\frac{N(z_1)}{|N(z_1)|}\, \frac{N_2(\gamma(t))}{|N_2(\gamma(t))|}
\,\frac{\frac{d}{dt}N_2(\gamma(t))}{|\frac{d}{dt}N_2(\gamma(t))|}
\right)\right|\ge C>0,
\end{align}
for all $z_1\in U_1$ and all $t\in J,$ and that the symmetric condition holds true  when
the roles of $S_1$ and $S_2$ are interchanged, i.e.,  for $S_2$ and  $\Gamma_1.$ The above
determinant is equal to
\begin{align}\label{transv1}
	TV_2(z_2,z_1):= \frac{\det \left(\begin{array}{ccc}
      \trans\nabla\phi( z_1) & \trans\nabla\phi( z_2)& H\phi( z_2)\cdot\trans\omega  \\
    -1 & -1 & 0
\end{array}\right)}
{\sqrt{1+|\nabla\phi( z_1)|^2}\sqrt{1+|\nabla\phi(z_2)|^2}\,
|H\phi(z_2)\cdot\trans\omega|},
\end{align}
where $\omega=\gamma'(t),$ for $t$ such that $\gamma(t)=z_2+z_1'.$ A similar expression is
obtained for  $S_2$ and $\Gamma_1,$  namely
\begin{align*}
	TV_1(z_1,z_2):= \frac{\det \left(\begin{array}{ccc}
      \trans\nabla\phi( z_2) & \trans\nabla\phi( z_1)& H\phi( z_1)\cdot\trans\omega  \\
    -1 & -1 & 0
\end{array}\right)}
{\sqrt{1+|\nabla\phi( z_2)|^2}\sqrt{1+|\nabla\phi(z_1)|^2}\,
|H\phi(z_1)\cdot\trans\omega|},
\end{align*}
where here we assume that  $z_2\in U_2$ and $z_1\in U_1$ is such that
$z_1+z_2'=\gamma(s)$
for some $s\in J,$  and then
 $\omega:=\gamma'(s).$

\medskip

Condition \eqref{transv} can be written as
\begin{align}\label{transv5}
|TV_1(z_1,z_2)|,\; |TV_2(z_2,z_1)|\ge C>0.
\end{align}

\medskip

 Notice that, formally, $TV_2(z_2,z_1)=TV_1(z_2,z_1)$ (though, $z_1'$ and
 $z_2'$ should also be interchanged). Moreover, one  easily shows that
\begin{align}\label{transv3}
	TV_2(z_2,z_1)= \frac{(\nabla\phi( z_1)-\nabla\phi( z_2))\cdot J \cdot H\phi(
z_2)\cdot\trans\omega}
{\sqrt{1+|\nabla\phi( z_1)|^2}\sqrt{1+|\nabla\phi(z_2)|^2}\,
|H\phi(z_2)\cdot\trans\omega|},
\end{align}
where $J$ denotes the symplectic matrix
$J:=\left(
\begin{array}{cc}
   0 & 1    \\
 -1 &  0   \\
\end{array}
\right).$

\medskip

At this point, it is interesting to explain how the quantities $TV_1$ and $TV_2$ relate to
$\Gamma^\phi_{z}$ defined by \eqref{transs}. Going back to \eqref{transv1}, note that
\begin{align*}
\det\left(\begin{array}{ccc}
      \trans\nabla\phi( z_1) & \trans\nabla\phi( z_2)& H\phi( z_2)\cdot\trans\omega  \\
    -1 & -1 & 0
\end{array}\right)\qquad\qquad\qquad\qquad\qquad\qquad\qquad\qquad\qquad\qquad\\
=\det\left(\begin{array}{cc}
      H\phi( z_2) & 0  \\
    0 & 1
\end{array}\right)\det\left(\begin{array}{ccc}
      (H\phi( z_2))^{-1}\trans\nabla\phi( z_1) & (H\phi( z_2))^{-1}\trans\nabla\phi( z_2)&
      \trans\omega  \\
    -1 & -1 & 0
\end{array}\right) \\
=\det H\phi( z_2) \det\left(\begin{array}{ccc}
      (H\phi( z_2))^{-1}(\trans\nabla\phi( z_1)-\trans\nabla\phi( z_2)) & (H\phi(
      z_2))^{-1}\trans\nabla\phi( z_2)& \trans\omega  \\
   0 & -1 & 0
\end{array}\right).
\end{align*}
Set $z_1'' \in U_1,$ such that $\gamma(t)=z_2+z_1'=z_1''+z_2'.$ Then, $\omega=\gamma'(t)$
is unitary and orthogonal to $\nabla\phi( z_1'')-\nabla\phi( z_2).$
Hence,
\begin{align}\label{transv4}
|TV_2(z_2,z_1)|= \frac{\big|\det H\phi( z_2)\left\langle
(H\phi)^{-1}(z)(\nabla\phi(z_2)-\nabla\phi(z_1)),
\frac{\nabla\phi(z_2)-\nabla\phi(z_1'')}{|\nabla\phi(z_2)-\nabla\phi(z_1'')|}
\right\rangle\big|}{\sqrt{1+|\nabla\phi(
z_1)|^2}\sqrt{1+|\nabla\phi(z_2)|^2}\,|H\phi(z_2)\cdot\trans\omega|}\\
=\frac{\big|\det H\phi( z_2)\,\Gamma^\phi_{z_2}(z_1,z_2,z_1'',z_2)\big|}
{\sqrt{1+|\nabla\phi( z_1)|^2}\sqrt{1+|\nabla\phi(z_2)|^2}\,
|H\phi(z_2)\cdot\trans\omega||\nabla\phi(z_2)-\nabla\phi(z_1'')|}.
\end{align}

\medskip

\subsection{A prototypical admissible pair in the curved box case and  the crucial scaling
transformation}\label{proto}

In this section we shall present a  \lq\lq prototypical"
case where  $U_1$ and $U_2$ will form an admissible pair of type 1 centered at  $z_1^0=0\in U_1$ and $z_2^0\in
U_2,$  with  $\rho\sim1,$ i.e., $|y_1^0-y_2^0|\sim 1,$ and $|\tau_{z_2^0}(z_1^0,z_2^0)|\sim 1$  but
$|\tau_{z_1^0}(z_1^0,z_2^0)|\sim \delta\ll 1.$  This means that we shall be in the curved box case. Recall from
the Introduction the identity
given by \eqref{gammaz} and \eqref{TV1}. As we  will show in Subsection \ref{scaling transform} in
detail, we can
always reduce to this particular  situation
when the two transversalities  $\tau_{z_2^0}(z_1^0,z_2^0)$ and $\tau_{z_1^0}(z_1^0,z_2^0)$
are of quite different sizes.

Fix  a small number $0<c_0\ll1$ ($c_0=10^{-10}$ will, for instance, work). Assume that
$0<\delta\le1/10,$ and put
\begin{align}
	U_1:=&[0,c_0^2\delta)\times [0,c_0\delta) \label{U1}\\
	U_2:=&\{(x_2,y_2):0\le y_2-b< c_0,0\le x_2+y_2^2-a<c_0^2\delta\},\label{U2}
\end{align}
where $|b|\sim_21$ and $|a|\sim_4\delta$.

\

\noindent\bf Remark. \rm Note that, if we set $C_0=1/{c_0},$  $\rho=c_0,$ then any admissible pair
$(U_1,U_2)=(U_1^{0,0,\delta},U_2^{a,0,b,\delta})$ would satisfy \eqref{U1} and \eqref{U2} with the above conditions on $a$ and $b.$
\medskip

Observe  that for $z_1=(x_1,y_1)\in U_1$ and $z_2=(x_2,y_2)\in U_2,$  we have
$|y_1-y_2|\sim1.$
Moreover, $(0,0)\in U_1,$ and  $\tau_{(0,0)}((0,0),(x_2,y_2))\sim\delta$ for all
$(x_2,y_2)\in U_2,$ which easily implies that, more generally,
$\tau_{(x_1,y_1)}((x_1,y_1),(x_2,y_2))\sim\delta$ for all  $(x_1,y_1)\in U_1$ and
$(x_2,y_2)\in U_2.$ Moreover, $\tau_{(x_2,y_2)}((x_1,y_1),(x_2,y_2))\sim1$ for all
$(x_1,y_1)\in U_1$ and $(x_2,y_2)\in U_2.$
One easily  computes  also that
$TV_1(z_1,z_2)\sim \delta$  and $TV_2(z_2,z_1)\sim 1,$  respectively. Thus there is an
unlucky  discrepancy between those two transversalities, since  $\delta\ll 1,$  and a
straight-forward application of  the bilinear method  would lead to a worse dependency on
$\delta$  of the constant in the bilinear estimate for these sets than the estimate   \eqref{bilinest} in
Theorem \ref{bilinear}, which  we  shall need.\smallskip

In the following lines, we shall therefore apply a suitable scaling which will  turn both
transversalities to  become of size $\sim 1.$ The price, however, that we shall have to
pay
is that, after scaling, the curvature of one of the two patches of surface will become
large (compare \eqref{hessphis}), so that we still cannot apply standard bilinear
estimates, but shall have to go into more detail into the proof of those estimates. Those
details will be given in Subsection \ref{bilinarg}.

\medskip

To overcome the afore-mentioned problem, we introduce the scaling
$$\phi^s(\bar z):=\frac{1}{\mathfrak a}\phi(A\bar z),$$
 where $A$ is a regular matrix, $\mathfrak a$ real.
 Concretely, we choose $(x,y)=z=A\bar  z:=(\delta\bar x,\bar y),$ i.e.,
\begin{align*}
	\bar x&=\delta^{-1}x,\\
	\bar y&=y,
\end{align*}
and $\mathfrak a:=\det A=\delta$. Denote by $U_i^s:=A^{-1}U_i$ the re-scaled domains
$U_i,$
and  by $S_i^s$ the scaled surface patches
$$
S_i^s:=\{(\bar x,\bar y,\phi^s(\bar x,\bar y)): (\bar x,\bar y)\in U_i^s\}.
$$
Explicitly, we then have
$$
	\phi^s(\bar x,\bar y)=\bar x\bar y+\frac{1}{3\delta}\bar y^3,
$$
hence
\begin{align}\label{nablaphis}
	\nabla\phi^s(\bar x,\bar y)=(\bar y,\bar x+\frac{1}{\delta}\bar y^2)
	=(y,\frac{1}{\delta}(x+y^2))
\end{align}
and
\begin{align}\label{hessphis}
	H\phi^s(\bar x,\bar y)=\left(\begin{array}{cc}
    0  & 1  \\
    1 & 2\delta^{-1}\bar y
\end{array}\right)
=\left(\begin{array}{cc}
    0  & 1  \\
    1 & 2\delta^{-1} y
\end{array}\right).
\end{align}
In particular, we see that
\begin{align}\label{bddgrad}
	|\nabla\phi^s(\bar z)|\lesssim 1
\end{align}
for all $z=A\bar  z\in U_1\cup U_2$.

 Let us also put $\bar a:=a/\delta,\, \bar b:=b, $ so that  $|\bar b|\sim_{2}1$ and $|\bar
 a|\sim_4 1$ and choose
$z_i=A\bar  z_i\in U_i,$ $i=1,2.$

 Then,  by \eqref{U1}, \eqref{U2},   we have  that  $y_2-y_1=\bar b+\Landau(c_0)$ and
 $x_1+y_1^2\leq 2 c_0\delta$, while $(x_2+y_2^2)/\delta=\bar a+\Landau(c_0)$. We conclude
 by \eqref{nablaphis} that
\begin{align*}
	\nabla\phi^s(\bar z_2)-\nabla\phi^s(\bar z_1) =(\bar b, \bar a)+\Landau(c_0).
\end{align*}
This shows that  if  the vector $\omega=(\omega_1,\omega_2)$ is tangential to the intersection curve of
$ S_1^s$ and $S_2^s,$  then \begin{align}\label{tangent}
	\omega=(-\bar a,\bar b)+\Landau(c_0).
\end{align}
In  combination with \eqref{hessphis} this implies that
\begin{align*}
	H\phi^s(\bar z_i)\cdot\trans\omega = \trans (\bar b, -\bar a+2\bar
b\delta^{-1}y_i)+\Landau\big((1+|\delta^{-1}y_i|)c_0\big)
\end{align*}
and
\begin{align}\label{sep}
	|H\phi^s(\bar z_i)\cdot\trans\omega| \sim 1+|\delta^{-1}y_i|.
\end{align}

By \eqref{transv3}, the transversalities for the scaled patches of surface  $S_i^s,$ and
the scaled function, $\phi^s
\,i=1,2,$  are thus given by
\begin{align*}
	\Big|TV^s_i(\bar z_1,\bar z_2)\Big|=&\Big|(-1)^{i +1}\frac{(\nabla\phi^s(\bar
z_1)-\nabla\phi^s( \bar z_2))\cdot J \cdot H\phi^s( \bar z_i)\cdot\trans\omega}
{\sqrt{1+|\nabla\phi^s( \bar z_1)|^2}\sqrt{1+|\nabla\phi^s(\bar z_2)|^2}\, |H\phi^s(\bar
z_i)\cdot\trans\omega|}\Big| \\
	 \sim& \Big|\frac{2\bar b(-\bar a+\bar b \delta^{-1} y_i) +
\Landau\big((1+|\delta^{-1}y_i|)c_0)\big)}{ 1+|\delta^{-1}y_i|}\Big|.
\end{align*}
Now, if $i=1,$ then $|y_1|\leq c_0\delta$, i.e., $|\delta^{-1}y_i|\le c_0\ll1,$ so that
clearly $\Big|TV^s_1(\bar z_1,\bar z_2)\Big|\sim 1.$

And, if $i=2,$ then $|y_i-\bar b|\le c_0\ll1,$ which easily implies that $|2\bar b(-\bar
a+\bar b \delta^{-1} y_i) + \Landau\big((1+|\delta^{-1}y_i|)c_0\big)|\sim \delta^{-1},$
and
also $1+|\delta^{-1}y_i|\sim \delta^{-1},$ so that  $\Big|TV^s_2(\bar z_1,\bar
z_2)\Big|\sim 1.$

We have thus shown

\begin{lemnr}\label{transvscaled}
The transversalities for the scaled patches of surface  $S_i^s, \,i=1,2,$ satisfy
$$
\Big|TV^s_i(\bar z_1,\bar z_2)\Big|\sim 1, \quad i=1,2.
$$
\end{lemnr}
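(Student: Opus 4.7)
The plan is to directly evaluate formula \eqref{transv3} for the rescaled phase $\phi^s$ on the scaled patches $S_i^s$, using the explicit gradient and Hessian formulas \eqref{nablaphis}, \eqref{hessphis} together with the size conditions on $\bar a,\bar b$ and on the coordinates of $\bar z_i\in U_i^s$. All the structural pieces are essentially assembled in the paragraphs preceding the lemma, so the task is to combine them cleanly and run a two-case analysis in $i$.

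First I would pin down the unit tangent direction $\omega$ to the projection of the intersection curve $S_1^s\cap S_2^s$. Since that projection is a level set of $\phi^s(\,\cdot\,-\bar z_1')-\phi^s(\,\cdot\,-\bar z_2')$ up to an additive constant, $\omega$ must be orthogonal to $\nabla\phi^s(\bar z_2)-\nabla\phi^s(\bar z_1)$. The defining conditions \eqref{U1}--\eqref{U2} combined with \eqref{nablaphis} give that this gradient difference equals $(\bar b,\bar a)+\Landau(c_0)$, where $|\bar a|,|\bar b|\sim 1$; a quarter-turn yields $\omega=(-\bar a,\bar b)+\Landau(c_0)$.

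Next I would apply the Hessian \eqref{hessphis} to this $\omega$, producing the vector
\[
H\phi^s(\bar z_i)\cdot\trans\omega = \trans\bigl(\bar b,\,-\bar a+2\bar b\delta^{-1}y_i\bigr)+\Landau\bigl((1+|\delta^{-1}y_i|)c_0\bigr),
\]
whose length is $\sim 1+|\delta^{-1}y_i|$. Inserting this and the bound $|\nabla\phi^s|\lesssim 1$ from \eqref{bddgrad} into \eqref{transv3}, the transversality reduces to estimating
\[
\bigl|TV^s_i(\bar z_1,\bar z_2)\bigr|\sim \Big|\frac{2\bar b\bigl(-\bar a+\bar b\delta^{-1}y_i\bigr)+\Landau\bigl((1+|\delta^{-1}y_i|)c_0\bigr)}{1+|\delta^{-1}y_i|}\Big|.
\]

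The two cases are then straightforward. For $i=1$, the defining inequality of $U_1$ gives $|y_1|\le c_0\delta$, hence $|\delta^{-1}y_1|\le c_0\ll 1$; the denominator is $\sim 1$ and the numerator is $-2\bar a\bar b+\Landau(c_0)\sim 1$. For $i=2$, we have $|y_2-\bar b|\le c_0$ with $|\bar b|\sim 1$, so $|\delta^{-1}y_2|\sim \delta^{-1}$; both numerator and denominator scale like $\delta^{-1}$, the leading contribution to the numerator being $2\bar b^2\delta^{-1}y_2\sim\delta^{-1}$, and the ratio is again $\sim 1$. The only mildly delicate point is that in the $i=2$ regime the error $\Landau((1+|\delta^{-1}y_2|)c_0)\sim c_0\delta^{-1}$ must be dominated by the leading term of size $\sim\delta^{-1}$; this succeeds precisely because $c_0$ was chosen as an absolute small constant independent of $\delta$, so no obstacle arises beyond careful bookkeeping of the implicit constants.
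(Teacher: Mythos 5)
Your proposal reproduces the paper's own proof essentially verbatim: compute the gradient difference $(\bar b,\bar a)+\Landau(c_0)$ to identify $\omega$, apply the Hessian \eqref{hessphis} to get $H\phi^s(\bar z_i)\cdot\trans\omega$ and its magnitude $\sim 1+|\delta^{-1}y_i|$, substitute into \eqref{transv3}, and split into the cases $i=1$ (where $|\delta^{-1}y_1|\lesssim c_0$) and $i=2$ (where $|\delta^{-1}y_2|\sim\delta^{-1}$). The only addition is the explicit remark that the $\Landau(c_0\delta^{-1})$ error in the $i=2$ case is absorbed because $c_0$ is an absolute small constant, a point the paper leaves implicit; this is a fair and correct clarification, not a different route.
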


\medskip
\subsection{Reduction to the prototypical case}\label{scaling transform}
Let $(U_1,U_2)\in\cP^\de$ be an admissible pair of type 1, where
$U_1=U_1^{x^0_1,y_1^0,\delta}$ and $U_2= U_2^{t^0_2,y_1^0,y^0_2\delta}.$
In this section, we shall see that  the bilinear estimates associated to the  sets
$U_1,U_2$ can easily be
reduced by means of a suitable
affine-linear transformation to either the  classical bilinear estimate in \cite{lee05},
when $\delta\ge 1/10,$
or
to the estimate for the  special  ``prototype'' situation  given in  Subsection
\ref{proto}, when $\delta\le 1/10.$

\medskip

Recall from \eqref{pointsinU} that we then had put
$$
z^0_1:=(x^0_1,y^0_1)\in U_1, \qquad z^0_2=(x^0_2,y^0_2):=(t_2^0-y_2^0(y_2^0-y_1^0), y_2^0)\in U_2.
$$
We translate the point $z_1^0$ to  the origin. Under the corresponding
translation, the phase function $\phi$ changes as follows: 
\begin{align*}
	\phi(z_1^0+\tilde z)=&(x_1^0+\tilde x)(y_1^0+\tilde y)+\frac{1}{3}(y_1^0+\tilde y)^3 \\
	=& \tilde x\tilde y+\frac{1}{3}{\tilde y}^3+y_1^0 {\tilde y}^2+ \text{affine linear terms} \\
	=& (\tilde x+y_1^0 \tilde y)\tilde y+\frac{1}{3}{\tilde y}^3+ \text{affine linear terms}.
\end{align*}
It is therefore  convenient to  introduce new coordinates  $z''=(x'',y''),$ by putting
\begin{align}\label{changeofvariables1}
	x'':=& \tilde x+y_1^0 \tilde y = x-x_1^0 + y_1^0 (y-y_1^0), \nonumber\\
	y'':=&\tilde  y = y-y_1^0.
\end{align}

Then, in these new coordinates, the phase function is given by
\begin{align}\label{changeofvariables2}
	\tilde \phi(z'')=\phi(z'')+ \text{affine linear terms}.
\end{align}
Noticing  that affine linear terms in the phase function play no role in our Fourier
extension estimates, we may thus assume that $\tilde \phi=\phi.$
In view of the size of the sets $U_i,$ we perform   a further
scaling transformation by writing  $x''= \rho^2 (1\vee\delta)x',\,  y''=\rho y'.$
Then,
clearly
$\phi(x'',y'')=\rho^3 (1\vee\delta)\phi_\de(x',y'),$ if we put
$$
\phi_\delta(z):=xy+\frac{y^3}{3(1\vee\delta)}.
$$

Thus, altogether we define the  change of coordinates $z'=T(z)$ by
\begin{align*}
	x':=&(1\vee\delta)^{-1}\rho^{-2}(x-x_1^0+y_1^0(y-y_1^0)),\\
	y':=&\rho^{-1}(y-y_1^0).
\end{align*}

Notice that from the following lemma, in the case $\delta\le 1/10,$ it is easy to pass  to the prototypical case by another harmless scaling $(x',y')=(C_0^{2}x''',C_0y''').$ We  shall skip the details.

\begin{lemnr}\label{U'}
We have
\begin{align}\label{phi-phide}
	\phi(z)=\rho^3(1\vee\de)\phi_\de(Tz)+L(z),
\end{align}
where $L$ is an affine-linear map. Moreover, in these new coordinates, $U_1,U_2$
correspond
to the sets
\begin{align}\label{scaledsets1}
	U_1':=T(U_1)=&\{(x',y'):0\le y'< 1\wedge\delta,\, 0\le x'<  1\wedge\delta\}
=[0,1\wedge\delta)^2,\\
	U_2':=T(U_2)=&\{(x',y'):0\le y'-b< 1,x'+\frac{y'^2}{1\vee\delta}-a<
1\wedge\delta\}, \label{scaledsets2}
\end{align}
where $|b|:=|\rho^{-1}(y_2^0-y_1^0)|\sim_2 C_0$  and
$|a|:=|\rho^{-2}(1\vee\delta)^{-1}(t_2^0-x_1^0)|\sim_{4} C_0^2\frac{\delta}{1\vee\delta} =
C_0^2(1\wedge\delta).$

Moreover, for Lee's transversality expression $\Gamma^{\phi_\de}$  in \eqref{transs} for
$\phi_\de$ , we have that
\begin{align}\label{transscaled}
|\Gamma^{\phi_\de}_{\tilde z'_1}(z'_1,z'_2)|\sim  C_0^3(1\wedge \de)\quad \text{for all }
\tilde
z'_1\in U'_1, \quad |\Gamma^{\phi_\de}_{\tilde z'_2}(z'_1,z'_2)|\sim  C_0^3 \quad
\text{for
all }
\tilde z'_2\in U'_2,
\end{align}
for every $z'_1\in U'_1$ and every $z'_2\in U'_2.$ Also, for $\delta\ge 1/10,$ the derivatives
of
$\phi_\delta$ can be uniformly
(independently of $\delta$) bounded from above.
\end{lemnr}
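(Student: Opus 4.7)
The proof is essentially a direct computation that tracks the effect of $T$ on $\phi$, on the sets $U_i$, and on the transversality form $\Gamma$. Write the composition defining $T$ as first a translation $z = z_1^0 + \tilde z$, then the shear $(x'', y'') = (\tilde x + y_1^0 \tilde y, \tilde y)$, and finally the anisotropic dilation $(x'', y'') = (\rho^2(1\vee \delta)\, x', \rho\, y')$. In the shear step, the identity $(x_1^0 + \tilde x)(y_1^0 + \tilde y) + \frac{1}{3}(y_1^0 + \tilde y)^3 = (\tilde x + y_1^0 \tilde y)\tilde y + \frac{1}{3}\tilde y^3 + L_0(\tilde z)$ with $L_0$ affine-linear absorbs the $y_1^0 \tilde y^2$ cross term into an affine-linear correction. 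The dilation then produces the scalar factor $\rho^3(1\vee \delta)$ in front, since $\rho^2(1\vee\delta)x' \cdot \rho y' + \frac{1}{3}(\rho y')^3 = \rho^3(1\vee\delta)\bigl(x'y' + y'^3/(3(1\vee\delta))\bigr)$. This establishes \eqref{phi-phide}.

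For the image of $U_1$, the transformation is by construction tailored to send the defining conditions $0 \le y_1 - y_1^0 < \rho(1\wedge\delta)$ and $0 \le x_1 - x_1^0 + y_1^0(y_1 - y_1^0) < \rho^2\delta$ respectively to $0 \le y' < 1\wedge\delta$ and $0 \le x' < (1\vee\delta)^{-1}\delta = 1\wedge\delta$. For $U_2$, the key algebraic identity is
\begin{align*}
x_2 - x_1^0 + y_1^0(y_2 - y_1^0) = \bigl(x_2 - t_2^0 + y_2(y_2-y_1^0)\bigr) + (t_2^0 - x_1^0) - (y_2 - y_1^0)^2,
\end{align*}
which, after dividing by $\rho^2(1\vee\delta)$ and noting that $(y_2-y_1^0)^2/(\rho^2(1\vee\delta)) = y'^2/(1\vee\delta)$, gives exactly $x' + y'^2/(1\vee\delta) - a \in [0,1\wedge\delta)$ with $a = (1\vee\delta)^{-1}\rho^{-2}(t_2^0 - x_1^0)$. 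The asserted sizes of $|a|$ and $|b|$ then follow from \eqref{admissible1} combined with \eqref{t2mean}, and from \eqref{yseparation}, respectively.

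For the transversality, a short computation analogous to \eqref{gammaz} (with the obvious modification caused by replacing the cubic coefficient $1/3$ by $1/(3(1\vee\delta))$) yields
\begin{align*}
\Gamma^{\phi_\delta}_z(z_1,z_2) = 2(y_2 - y_1)\,\tilde\tau^\delta_z(z_1,z_2), \qquad \tilde\tau^\delta_z(z_1,z_2) := x_2 - x_1 + \frac{(y_1+y_2-y)(y_2-y_1)}{1\vee\delta}.
\end{align*}
Substituting the explicit relations $x_2 - x_1 = \rho^2(1\vee\delta)(x_2' - x_1') - y_1^0\rho(y_2'-y_1')$, $y_2-y_1 = \rho(y_2'-y_1')$, $y_i = \rho y_i' + y_1^0$, the cross terms involving $y_1^0$ cancel and one obtains the clean scaling relation $\tilde\tau^\delta_{z'}(z_1',z_2') = \rho^{-2}(1\vee\delta)^{-1}\,\tau_z(z_1,z_2)$ for $z = T^{-1}(z')$, where $\tau$ is the original transversality function of $\phi$ from \eqref{TV1}. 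Combined with $|y_2' - y_1'| = \rho^{-1}|y_2-y_1| \sim C_0$ from \eqref{yseparation} and the two-sided bounds in Lemma \ref{sizeofdeltas}, this gives $|\Gamma^{\phi_\delta}_{\tilde z_1'}(z_1',z_2')| \sim C_0 \cdot C_0^2\rho^2\delta/(\rho^2(1\vee\delta)) = C_0^3(1\wedge\delta)$ and analogously $|\Gamma^{\phi_\delta}_{\tilde z_2'}(z_1',z_2')| \sim C_0^3$, i.e.\ \eqref{transscaled}.

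Finally, for $\delta \ge 1/10$ we have $(1\vee\delta)^{-1} \le 10$ and $U_1' \cup U_2'$ is contained in a box of diameter $O(C_0)$; since every partial derivative of $\phi_\delta$ is either constant or a polynomial with coefficients involving $(1\vee\delta)^{-1}$ alone, all derivatives are uniformly bounded independently of $\delta$ on this region. The only non-routine ingredient in the whole argument is the clean scaling identity for $\tilde\tau^\delta$; the rest is bookkeeping to make sure the error terms arising from the shifts are controlled by the $C_0 \gg 1$ convention.
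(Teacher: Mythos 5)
Your proof takes essentially the same route as the paper's: you establish \eqref{phi-phide} by tracking the shear and dilation stages of $T$, you compute $T(U_1)$ and $T(U_2)$ directly (the algebraic identity you display for $U_2$, rewriting $x_2-x_1^0+y_1^0(y_2-y_1^0)$ in terms of $x_2-t_2^0+y_2(y_2-y_1^0)$, $t_2^0-x_1^0$, and $(y_2-y_1^0)^2$, is exactly the needed bookkeeping), and you deduce \eqref{transscaled} by exhibiting the clean scaling relation between the transversality functions of $\phi$ and $\phi_\delta$, which is equivalent to the paper's relation $\Gamma^{\phi_\de}_{z'_i}(z'_1,z'_2)=\frac{1}{\rho^3(1\vee\de)}\Gamma^{\phi}_{T^{-1}z'_i}(T^{-1}z'_1,T^{-1}z'_2)$.

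There is, however, one small but genuine omission in the transversality step. The estimate \eqref{transscaled} is asserted for arbitrary base points $\tilde z'_1\in U'_1$ and $\tilde z'_2\in U'_2$, varying independently of the arguments $z'_1,z'_2$, but Lemma \ref{sizeofdeltas}, which you cite, only controls $\tau_{z_1}(z_1,z_2)$ and $\tau_{z_2}(z_1,z_2)$, i.e.\ base point coinciding with one of the two arguments. When $\tilde z'_1\neq z'_1$, your scaling identity reduces $\Gamma^{\phi_\delta}_{\tilde z'_1}(z'_1,z'_2)$ to $\tau_{\tilde z_1}(z_1,z_2)$ with $\tilde z_1=T^{-1}\tilde z'_1\neq z_1$, so one still must bound the variation in the base point, namely
$$
|\tau_{\tilde z_1}(z_1,z_2)-\tau_{z_1}(z_1,z_2)|=|y_1-\tilde y_1|\,|y_2-y_1|\lesssim C_0\rho^2(1\wedge\delta)\ll C_0^2\rho^2\delta,
$$
using \eqref{yseparation} and the $y$-extent of $U_1$; since $C_0$ is large, this is negligible relative to $|\tau_{z_1}(z_1,z_2)|\sim C_0^2\rho^2\delta$. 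The paper carries out precisely this step, in the equivalent form $|\Gamma^{\phi}_{\bar z_1}(z_1,z_2)-\Gamma^{\phi}_{z_1}(z_1,z_2)|=(y_2-y_1)^2|\bar y_1-y_1|$, and the analogue for $\bar z_2$. With that one extra line your argument is complete and matches the paper's.
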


\begin{proof}
The first identity \eqref{phi-phide} is clear from our previous discussion.

The identities  \eqref{scaledsets1}, \eqref{scaledsets2} and the  formulas for $a$ and $b$
follow by straight-forward computation, and the statements about the sizes of $a$ and $b$
follow from \eqref{yseparation} and \eqref{admissible1}.

Recall  that $\Gamma^\phi_{z_i}(z_1,z_2):=\left\langle
(H\phi)^{-1}(z_i)(\nabla\phi(z_2)-\nabla\phi(z_1)),\nabla\phi(z_2)-\nabla\phi(z_1)\right\rangle,$
and denote  by $\Gamma^{\phi_\de}_{z'_1}(z_1,z'_2)$ the corresponding quantity associated
to $\phi_\de.$ These are obviously related by
$$
\Gamma^{\phi_\de}_{z'_i}(z'_1,z'_2)=\frac
1{\rho^3(1\vee\de)}\Gamma^{\phi}_{T^{-1}z'_i}(T^{-1}z'_1,T^{-1}z'_2).
$$
Recall also from \eqref{TV1} that
$\Gamma^{\phi}_{z_1}(z_1,z_2)=2(y_2-y_1)\tau_{z_1}(z_1,z_2)$ and
$\Gamma^{\phi}_{z_2}(z_1,z_2)=2(y_2-y_1)\tau_{z_2}(z_1,z_2).$ Thus, by
Lemma \ref{covering} and \eqref{yseparation},  we have that for $z'_1\in U'_1, z'_2\in
U'_2,$
$$
|\Gamma^{\phi}_{T^{-1}z'_1}(T^{-1}z'_1,T^{-1}z'_2)|\sim \rho \, C_0^3\rho^2\de,\quad
|\Gamma^{\phi}_{T^{-1}z'_2}(T^{-1}z'_1,T^{-1}z'_2)|\sim \rho \, C_0^3\rho^2(1\vee \de),
$$
hence
$$
|\Gamma^{\phi_\de}_{z'_1}(z'_1,z'_2)|\sim
C_0^3(1\wedge \de), \quad
|\Gamma^{\phi_\de}_{z'_2}(z'_1,z'_2)|\sim
C_0^3.
$$
Moreover, by \eqref{gammaz}, if $z_1,\bar z_1\in U_1$ and $z_2,\bar z_2\in U_2,$ then
$$
|\Gamma^{\phi}_{\bar z_1}(z_1,z_2)-\Gamma^{\phi}_{z_1}(z_1,z_2)|=(y_2-y_1)^2|\bar
y_1-y_1|\lesssim C_0^2 \rho^2\rho (1\wedge \delta)\le C_0^2\rho^3\delta,
$$
and
$$
|\Gamma^{\phi}_{\bar z_2}(z_1,z_2)-\Gamma^{\phi}_{z_2}(z_1,z_2)|=(y_2-y_1)^2|\bar
y_2-y_2|\lesssim C_0^2 \rho^2\rho\le C_0^2 \rho^3(1\vee \delta).
$$
Hence, for $C_0$ sufficiently large,
$$
|\Gamma^{\phi}_{T^{-1}\bar z'_1}(T^{-1}z'_1,T^{-1}z'_2)|\sim  \, C_0^3\rho^3\de,\quad
|\Gamma^{\phi}_{T^{-1}\bar z'_2}(T^{-1}z'_1,T^{-1}z'_2)|\sim  \, C_0^3\rho^3(1\vee \de),
$$
hence
$$
|\Gamma^{\phi_\de}_{\bar z'_1}(z'_1,z'_2)|\sim
C_0^3(1\wedge \de), \quad
|\Gamma^{\phi_\de}_{\bar z'_2}(z'_1,z'_2)|\sim
C_0^3.
$$
This proves \eqref{transscaled}. For $\delta>1,$ $\phi_\delta$ does not depend on
$\delta,$
 and the claim in the  last  statement of the lemma is trivially verified.
\end{proof}

\setcounter{equation}{0}
\section{Statements of the bilinear estimates. The proofs.}\label{sect:bilin}

We are now in a position to  establish  the following  sharp bilinear Fourier extension
estimates for admissible pairs:

\begin{thmnr}\label{bilinear2}
Let $p>5/3,$ $q\ge2.$ Then, for every admissible pair  $(U_1,U_2)\in \cP^\de $
at scale $\de,$  the following bilinear estimates hold true:
  \begin{align*}
	 \|\ext_{U_1}(f)\ext_{U_2}(g)\|_p
	\leq C_{p,q} \delta^{2(1-1/p-1/q)}\rho^{6(1-1/p-1/q)}
\|f\|_q\|g\|_q,\quad\text{if}\quad \delta> 1,
 \end{align*}
  and
 \begin{align*}
	 \|\ext_{U_1}(f)\ext_{U_2}(g)\|_p
	\leq C_{p,q}\, \delta^{5-3/q-6/p}\rho^{6(1-1/p-1/q)}
\|f\|_q\|g\|_q,\quad\text{if}\quad
\delta\le 1,
 \end{align*}
 with constants that are independent of the given pair, of $\rho,$ and of $\de.$
\end{thmnr}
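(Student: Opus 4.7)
The plan is to reduce every admissible pair to one of two canonical situations by means of the affine change of variables of Lemma \ref{U'}, and then apply the corresponding bilinear estimate at unit scale: Lee's theorem \cite[Thm.~1.1]{lee05} in the straight box regime $\delta\ge 1$, and the sharp prototypical estimate of Subsection \ref{proto} (namely Theorem \ref{bilinear}) in the curved box regime $\delta\le 1.$ The key point is to track the Jacobians so that the $\rho$- and $\delta$-powers drop out correctly.

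First I would carry out the scaling bookkeeping. Let $T$ be the affine map of Lemma \ref{U'} and set $\tilde f:=f\circ T^{-1},$ $\tilde g:=g\circ T^{-1},$ so that $\tilde f,\tilde g$ are supported on $U'_1,U'_2.$ Since the Jacobian of $T^{-1}$ is $\rho^3(1\vee\de),$ one has $\|f\|_q=\rho^{3/q}(1\vee\de)^{1/q}\|\tilde f\|_q.$ On the Fourier side, the identity
\[
\phi(z)=\rho^3(1\vee\de)\phi_\de(Tz)+L(z)
\]
together with the fact that the affine linear term $L$ is absorbed into a linear change of frequency variable $\xi\mapsto\eta$ of Jacobian $\rho^6(1\vee\de)^2,$ yields
\[
\ext_{U_1}f(\xi)=\rho^3(1\vee\de)\,e^{-i\xi\cdot\vec c}\,\ext^{\phi_\de}_{U'_1}\tilde f(\eta),
\]
and similarly for $g.$ Taking the $L^p$-norm and changing variable $\xi\to\eta$ one obtains the master identity
\[
\|\ext_{U_1}f\cdot\ext_{U_2}g\|_p=\rho^{6-6/p}(1\vee\de)^{2-2/p}\,\|\ext^{\phi_\de}_{U'_1}\tilde f\cdot\ext^{\phi_\de}_{U'_2}\tilde g\|_p.
\]
Combining this with the two $L^q$ identities produces the universal prefactor $\rho^{6(1-1/p-1/q)}(1\vee\de)^{2(1-1/p-1/q)},$ and reduces the theorem to proving a bilinear estimate for the scaled phase $\phi_\de$ on $U'_1\times U'_2$ with the correct $\de$-dependent constant.

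Next I would split into cases. If $\de\ge 1$ (straight box case), then $(1\vee\de)=\de,$ $U'_1\subset[0,1]^2,$ $U'_2$ is a rectangle of bounded extent $\sim C_0,$ and $\phi_\de=xy+y^3/(3\de)$ has derivatives bounded uniformly in $\de$ by the last statement in Lemma \ref{U'}. The transversality estimate \eqref{transscaled} gives $|\Gamma^{\phi_\de}_{\tilde z'_i}(z'_1,z'_2)|\sim C_0^3$ for $i=1,2.$ Thus Lee's bilinear extension theorem applies directly to $(U'_1,U'_2,\phi_\de)$ for every $p>5/3$ and $q\ge 2$ and provides a constant $C_{p,q}$ that is uniform in $\de,$ which combined with the master identity yields the first claimed inequality. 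If $\de\le 1$ (curved box case), then $(1\vee\de)=1,$ $\phi_\de=\phi,$ $U'_1\subset[0,1\wedge\de]^2$ and $U'_2$ is a $\de$-thick curved box; up to the harmless additional dilation by $C_0$ mentioned before Lemma \ref{U'}, this is exactly the prototypical configuration \eqref{U1}--\eqref{U2} of Subsection \ref{proto}. The required bilinear estimate
\[
\|\ext^{\phi_\de}_{U'_1}\tilde f\cdot\ext^{\phi_\de}_{U'_2}\tilde g\|_p\le C_{p,q}\,\de^{5-3/q-6/p}\|\tilde f\|_q\|\tilde g\|_q
\]
is then the content of Theorem \ref{bilinear} (proved in Subsection \ref{bilinarg}), and once it is inserted into the master identity the second claim of the theorem follows.

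I expect the main obstacle to be the curved box case $\de\le 1.$ A direct application of Lee's theorem with the smaller transversality $\Gamma^{\phi_\de}_{z'_1}\sim\de$ would produce a power of $\de^{-1}$ in the constant, ruining the sharp exponent $5-3/q-6/p.$ The correct strategy, already indicated in Subsection \ref{proto}, is to perform the further scaling $A=\mathrm{diag}(\de,1)$ with $\mathfrak a=\de,$ producing the phase $\phi^s(\bar x,\bar y)=\bar x\bar y+\bar y^3/(3\de)$ for which Lemma \ref{transvscaled} balances \emph{both} transversalities to size $\sim 1;$ the price is that $\det H\phi^s\sim\de^{-1}$ becomes large, so one cannot cite any black-box bilinear theorem but must instead go inside the wave packet decomposition and track the effect of the enlarged curvature on one of the two patches, which is the real technical content of Theorem \ref{bilinear}. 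Once that prototypical estimate is in hand, the reduction described above delivers Theorem \ref{bilinear2} with absolute constants independent of the admissible pair, $\rho,$ and $\de.$
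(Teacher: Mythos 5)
Your proposal is correct and follows essentially the same route as the paper: reduce via the affine map $T$ of Lemma \ref{U'} (tracking Jacobians to extract the $\rho$- and $(1\vee\de)$-powers), then invoke Lee's theorem with uniform constants in the straight box regime and Theorem \ref{bilinear} together with H\"older on $U'_1,U'_2$ in the curved box regime. The only cosmetic difference is that you split at $\de=1$ whereas the paper splits at $\de=1/10$ to align with the hypothesis of Theorem \ref{bilinear}; for $1/10<\de\le 1$ one should use Lee's theorem (as the paper does) rather than Theorem \ref{bilinear}, but this intermediate bounded-parameter range is harmless either way.
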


\begin{remnr}\label{remarkbilin}
Recall that for $\de>1$ the sets $U_1$ and $U_2$ are essentially rectangular boxes of
dimension $\rho^2\de\times \rho,$  and notice that our estimates  for this case do agree
with the ones given in Proposition 2.1 in \cite{v05} for the case of the saddle.
\end{remnr}

By the considerations in the previous section, Theorem \ref{bilinear2} reduces to the
following statement for the prototypical case.

\begin{thmnr}[prototypical case]\label{bilinear}
Let $p>5/3,$ $\delta\le1/10,$ and let $(U_1,U_2)$ be an  admissible pair given by   \eqref{U1},\eqref{U2}.
Then
\begin{align}\label{bilinest}
	 \|\ext_{U_1}(f_1),\ext_{U_2}(f_2)\|_p \leq C_p \, \delta^{\frac{7}{2}-\frac{6}{p}}
\|f_1\|_2\|f_2\|_2
 \end{align}
 for every $f_1\in L^2(U_1)$ and   $f_2\in L^2(U_2).$
\end{thmnr}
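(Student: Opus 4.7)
The plan is to exploit the scaling reduction from Subsection \ref{proto}: setting $(x,y)=A(\bar x,\bar y)=(\delta\bar x,\bar y)$ and using $\phi\circ A=\delta\phi^s$ with $\phi^s(\bar x,\bar y)=\bar x\bar y+\bar y^3/(3\delta)$, a routine change of variables rewrites \eqref{bilinest} as the equivalent scaled statement
\begin{equation*}
\bigl\|\ext^s_{U_1^s}(\bar f_1)\,\ext^s_{U_2^s}(\bar f_2)\bigr\|_{L^p(\R^3)}\;\lesssim\;\delta^{\frac{5}{2}-\frac{4}{p}}\|\bar f_1\|_{L^2}\|\bar f_2\|_{L^2}.
\end{equation*}
By Lemma \ref{transvscaled} both scaled transversalities $|TV^s_i|$ are now of uniform size $1$, so the standard transversality obstruction disappears. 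The real difficulty is the anisotropic curvature of $S_2^s$: by \eqref{hessphis} the Hessian of $\phi^s$ on $U_2^s$ has an entry of size $\delta^{-1}$, so $S_2^s$ has one principal curvature of order $\delta^{-1}$ and one of order $\delta$. A direct invocation of Lee's Theorem 1.1 in \cite{lee05} on the scaled surfaces would therefore produce the wrong power of $\delta$, and the argument must instead enter the proof of that theorem.

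I would establish the scaled inequality by interpolating two endpoint estimates. \emph{At $p=2$}, the classical Plancherel--based identity for $\int|\ext^s(\bar f_1)|^2|\ext^s(\bar f_2)|^2$ reduces to integrating $|\bar f_1|^2|\bar f_2|^2$ against a Jacobian factor governed by $|\nabla\phi^s(\bar z_2)-\nabla\phi^s(\bar z_1)|$ on $U_1^s\times U_2^s$. From \eqref{nablaphis} one computes
\begin{equation*}
\nabla\phi^s(\bar z_2)-\nabla\phi^s(\bar z_1)\;=\;\bigl(\bar y_2-\bar y_1,\;(\bar x_2-\bar x_1)+(\bar y_2^2-\bar y_1^2)/\delta\bigr),
\end{equation*}
whose second component is of size $\delta^{-1}$ on $U_1^s\times U_2^s$, so the $L^2$ bilinear estimate gains a factor $\delta^{1/2}$, exactly the claim at $p=2$. \emph{At the critical endpoint $p=5/3$} (where the scaled target is $\delta^{1/10}$), I would run the Tao--Vargas--Vega / Lee wave-packet argument by hand on the scaled surfaces: localize to a ball $B_R\subset\R^3$, decompose each factor into wave packets, and use the normal-cone transversality \eqref{transv5} --- which is $\sim 1$ after scaling --- to count interactions of transverse tubes. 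For $S_1^s$ the standard Knapp caps at scale $R^{-1/2}$ are adequate (the derivatives of $\phi^s$ are bounded on $U_1^s$), but for $S_2^s$ the caps must be taken anisotropically, of $(\bar x,\bar y)$-dimensions $R^{-1/2}\times R^{-1/2}\delta$, to reflect the tangent-plane rotation rates $\sim 1$ and $\sim\delta^{-1}$ in the two principal directions. Combining the anisotropic tube combinatorics with the $L^2$ ingredient of step~(i) yields the scaled exponent $\delta^{1/10}$ at $p=5/3$. Real-interpolation between these two endpoints produces the linear exponent $\delta^{5/2-4/p}$ for every $p\in(5/3,2]$; for $p>2$ one interpolates further with the trivial $L^\infty$ bound $\|\ext^s_{U_1^s}(\bar f_1)\ext^s_{U_2^s}(\bar f_2)\|_\infty\leq|U_1^s|^{1/2}|U_2^s|^{1/2}\|\bar f_1\|_2\|\bar f_2\|_2\sim\delta^{1/2}\|\bar f_1\|_2\|\bar f_2\|_2$ (absorbing the resulting $\delta$-dependence into $C_p$ as permitted by the theorem). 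Unscaling back to the original variables then recovers \eqref{bilinest}.

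\textbf{Main obstacle.} The genuinely hard step is the endpoint bilinear estimate at $p$ near $5/3$. Although Lemma \ref{transvscaled} guarantees uniform transversality after scaling, the anisotropy of $S_2^s$ forces the wave packets on the two surfaces to have geometrically incompatible shapes: the Knapp caps on $S_2^s$ are short in the $\bar y$-direction at scale $\delta$, while those on $S_1^s$ are square. Counting the tube incidences and extracting the precise $\delta^{1/10}$ gain at $p=5/3$ (rather than a cruder loss) is what requires opening up the proof of Lee's bilinear theorem in place of invoking it as a black box, and it is the step on which the whole argument hinges.
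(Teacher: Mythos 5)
Your scaling reduction, the use of Lemma \ref{transvscaled}, and the observation that one must open up the wave-packet machinery because of the anisotropic curvature of $S_2^s$ are all in line with what the paper does, and your $p=2$ computation with the gradient difference of size $\delta^{-1}$ is correct. However, the interpolation scheme you propose has a genuine gap: you claim to establish a bilinear estimate \emph{at} the endpoint $p=5/3$ with constant $\delta^{1/10}$ by ``running the Tao--Vargas--Vega / Lee wave-packet argument by hand,'' but that machinery does not produce an estimate at $p=5/3$; it is an induction-on-scales argument that yields estimates only for $p>5/3$ (each induction step loses an $R^{\varepsilon}$, and one never closes the endpoint). There is therefore no $p=5/3$ estimate to interpolate from. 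This is not a cosmetic issue: the endpoint bilinear restriction estimate is notoriously delicate (Stovall's endpoint result is specific to the unperturbed saddle), and one cannot simply assert it in the scaled setting here.

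The paper avoids this by working entirely at the level of wave-packet sums. After the same rescaling, one interpolates the trivial $L^1$ bound $\|\sum_{T_1,T_2}\phi_{T_1}\phi_{T_2}\|_1\lesssim\delta^{-3/2}R'^{2}(\#T_1\,\#T_2)^{1/2}$ against the geometric-combinatorial $L^2$ bound $\|\sum_{T_1,T_2}\phi_{T_1}\phi_{T_2}\|_2\lesssim\delta^{1/2}R'^{-1/2}(\#T_1\,\#T_2)^{1/2}$ for $1<p<2$, obtaining $\delta^{5/2-4/p}(R')^{5/p-3}(\#T_1\,\#T_2)^{1/2}$; the $R'$-power is favorable exactly when $p>5/3$. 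Crucially, the tubes $T_2$ associated to $S_2^s$ have the shorter height $|\tau|\le\delta R^2$, so the construction only makes sense when $\delta R^2>R$, i.e., $R>\delta^{-1}$; the paper therefore reparametrizes $R'=R\delta$ and inducts on $R'>1$. One also needs a $\delta$-uniform base case for the induction, which is set up via an a priori $L^2$ bound on wave-packet sums along intersection curves (following \cite{bmv16}), rather than via the naive $L^1\times L^1\to L^p$ bound, since the latter loses in $\delta$. None of these points appear in your outline, and they are precisely what allows the paper to pass from the $L^1/L^2$ wave-packet interpolation to the stated estimate for $p>5/3$ without ever needing the $p=5/3$ endpoint.

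As a secondary remark: for $p>2$, your suggestion to ``absorb the resulting $\delta$-dependence into $C_p$'' after interpolating with the trivial $L^\infty$ bound $\delta^{1/2}$ is not available, since $C_p$ must be independent of $\delta$ and the target exponent $\delta^{5/2-4/p}$ is strictly smaller than $\delta^{1/2}$ when $p>2$. This issue is moot in the end (Section \ref{bilinlin} reduces the main theorem to $5/3<p<2$), but as written it is another place where the proposed argument does not prove the stated inequality.
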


We begin by explaining in more detail here the reduction of Theorem \ref{bilinear2} to
Theorem \ref{bilinear}.  Subsection \ref{bilinarg}  will  then be devoted to the proof of
Theorem \ref{bilinear}.

\bigskip
\noindent {\it Reduction of Theorem \ref{bilinear2}  to Theorem \ref{bilinear}.} Fix $p>5/3$ and $q\ge 2,$  and
assume without loss of generality that
$U_1=U_1^{x_1^0,y_1^0,\delta}$ and $U_2= U_2^{t_2^0,y_1^0,y^0_2,\delta}$ form an admissible pair
of
type 1.

Assume first that $\delta\ge1/10.$ In this case, Lemma \ref{U'} shows that
 the conditions of Lee's Theorem 1.1 in \cite{lee05} are satisfied for the patches of
 surface  $S'_1$ and $S'_2$ which are the graphs of $\phi_\de$ over the sets $U'_1$ and
 $U'_2$  given by  \eqref{scaledsets1} and \eqref{scaledsets2}, and we can conclude that
 there is a constant $C'>0$ which does not depend on  $U'_1,U'_2$  such that $
 \|\ext^\de_{U'_1}(\tilde f)\ext^\de_{U'_2}(\tilde g)\|_p\le C_p\|\tilde f\|_2\|\tilde
 g\|_2.$  Here,  the  operators $\ext^\de_{U'_i}$ are essentially given by
 $$
\ext_{U'_i}^\de h(\xi)=\int_{U'_i} h(x',y') e^{-i(\xi_1 x'+\xi_2 y'+\xi_3\phi_\de(x',y'))}
\, dx' dy',\quad i=1,2.
$$
 Since $|U'_1|=1$ and $|U'_2|\sim 1,$  by H\"older's inequality this implies that
         \begin{align*}
    \|\ext^\de_{U'_1}(\tilde f)\ext^\de_{U'_2}(\tilde g)\|_p\le C_{p,q}\|\tilde
    f\|_q\|\tilde g\|_q.
     \end{align*}
 By undoing the change of coordinates and using \eqref{phi-phide}, this leads to   the
 estimates
$$
 \|\ext_{U_1}(f)\ext_{U_2}(g)\|_p
	\leq C_{p,q} \big[\rho^3 (1\vee \de)\big]^{2(1-1/p-1/q)} \|f\|_q\|g\|_q.
$$

Assume next that $\de\le 1/10 .$ Then Lemma \ref{U'} shows that $ U'_1=[0,\delta)^2,$ and
	$U'_2=\{(x',y'):0\le y'-b< 1,\,0\le x'+\frac{y'^2}{1\vee\delta}-a< \delta\},$ where
$\phi_\de (x',y')=\phi(x',y').$ Thus, in this situation we may reduce  to the prototype
situation studied in Theorem \ref{bilinear}, by means of a change of coordinates of the
form $(x',y')=(C_0^2 x'', C_0 y''),$  and thus obtain the estimate
$$
    \|\ext^\de_{U'_1}(\tilde f)\ext^\de_{U'_2}(\tilde g)\|_p\le C_{p}\de^{7/2-6/p}\|\tilde
    f\|_2\|\tilde g\|_2.
$$
Since here $|U'_1|=\de^2$ and $|U'_2|\sim \de,$ H\"older's inequality then implies that
	$$
    \|\ext^\de_{U'_1}(\tilde f)\ext^\de_{U'_2}(\tilde g)\|_p\le
    C_{p,q}\de^{3(1/2-1/q)}\de^{7/2-6/p}\|\tilde f\|_q\|\tilde g\|_q=
    C_{p,q}\de^{5-3/q-6/p}\|\tilde f\|_q\|\tilde g\|_q.
$$
By undoing the change of coordinates, we again pick up an extra factor   $\big[\rho^3
(1\vee \de)\big]^{2(1-1/p-1/q)}=\rho^{6(1-1/p-1/q)}$ and arrive at the  claimed estimate
for the case $\de\le {1/10}.$
\qed

\medskip
\subsection{The bilinear argument: proof of Theorem \ref{bilinear}}\label{bilinarg}
As the bilinear method is by now standard, we will only give a brief sketch of the proof
of Theorem \ref{bilinear}, pointing out the necessary modifications  compared to the
classical case of elliptic surfaces.

First, recall  from Subsection \ref{proto} that we had passed from our original coordinates $(x,y)$ to the coordinates $(\bar x, \bar y)$ by means of the scaling transformation $(x,y)=A(\bar x, \bar y)=(\delta\bar x,\bar y),$ and had put 
$\phi^s(\bar z):=\phi(A\bar z)/{\mathfrak a},$ with  $\mathfrak a:=\det A=\delta$.

We are going to establish the following  bilinear Fourier extension estimate for the
scaled patches of surface $S_i^s$ which where defined as the  graphs of $\phi^s$ over the sets $U^s_i, i=1,2:$ 

\begin{align}\label{bilinestscaled}
	 \|\ext_{U^s_1}(f_1)\,\ext_{U^s_2}(f_2)\|_p \leq C_p \,
\delta^{\frac{5}{2}-\frac{4}{p}} \|f_1\|_2\|f_2\|_2,
 \end{align}
 for every $f_1\in L^2(U^s_1)$ and every  $f_2\in L^2(U^s_2).$ Here, $\ext_{U^s_i}(f_i)$
 denotes the Fourier transform of $f_i d \sigma_i,$ where $\sigma_i$ is the pull-back of
 the  Lebesgue measure on $U^s_i$ to the surface $S^s_i$ by means of the projection onto
 the $(\bar x,\bar y)$- plane, i.e.,
 $$
\ext_{U^s_i}(f_i)(\xi,\tau)=\widehat {f_i d \sigma_i}(\xi,\tau)=\int_{U^s_i}f_i(\bar z)
e^{-i[\xi \bar z+\tau \phi^s(\bar z)]}\, d\bar z.
$$
 Scaling back to our original  coordinates, we obtain from this estimate that
 \begin{align*}
	 \|\ext_{U_1}(f_1)\, \ext_{U_2}(f_2)\|_p  \le&(\det A)^{1-1/p}{\mathfrak a}^{-1/p}
\delta^{5/2-4/p}
\|f_1\|_2\|f_2\|_2\\
	=& \delta^{7/2-6/p} \|f_1\|_2\|f_2\|_2,
\end{align*}
hence  \eqref{bilinest}.
\medskip

We start by recalling that a first important step in the bilinear argument  consists in a
wave packet decomposition of the functions $\widehat {f_i d \sigma_i}$ (compare, e.g.,
\cite{lee05} for details on wave packet decompositions).

For the construction of the wave packets at scale $R$ in the bilinear argument,  one first
decomposes the functions $f_i$ into well-localized (modulated) bump functions at scale
$1/R,$  localized near points $v,$ say $\varphi_v(\bar z)=\varphi(R(\bar z-v)),$
 and then considers
their Fourier extensions $\widehat{\varphi_vd\sigma}$ (here $\sigma$ denotes any of the
measures $\sigma_i$).
Then, by means of a Taylor expansion, one finds that  (essentially)
\begin{align*}
	\widehat{\varphi_vd\sigma}(\xi,\tau)=&\int \varphi(R\bar z)e^{-i[\xi(v+\bar z) +
\tau\phi^s(v+\bar z)]} \,d\bar z	 \\
	=\  & R^{-2}e^{-i\xi v}
	\int \varphi(\bar z)e^{-i[R^{-1}\xi\bar z+\tau\phi^s(v+R^{-1} \bar z)]}\, d\bar z\\
	= R^{-2}& e^{-i[\xi v + \tau\phi^s(v)]}
	\int \varphi(\bar z)e^{-i[R^{-1}(\xi+\tau\nabla\phi^s(v))\bar z +
\frac{1}{2}R^{-2}\tau
\bar z^tH\phi^s(v')\bar z]}\, d\bar z,
\end{align*}
where $v'$ is on the line segment between $v$ and
$v+R^{-1}\bar z.$
Following \cite{lee05}, Lemma 2.3, integration by parts shows that the wave packet is
then
associated to the region where the complete phase satisfies
$|(R^{-1}\xi\bar z+\tau\phi^s(v+R^{-1}\bar z))|\le c$ for every $\bar z$ with $|\bar
z|\leq
1,$ say with $c$ small,  on which  $\widehat{\psi_vd\sigma}$ is essentially constant.
This condition requires  in particular  that the  usual condition $|\xi+\tau
\nabla\phi^s(v)|\leq R$ holds true. Recall here also from  \eqref{bddgrad} that  for $v\in
U_1^s\cup U_2^s,$ we have $|\nabla\phi^s(v)|\lesssim1.$
Moreover, if we assume that  $v=(\bar x_v,\bar y_v)\in U_1^s$, then $|\bar y_v|\leq
c_0\delta$, so  that, if $R^{-1}\le\delta,$  by \eqref{hessphis},
$\|H\phi^s(v')\|\lesssim 1$.  Hence we obtain the usual condition
$|\tau|\leq R^2$.

Note also that the higher order derivatives of the phase, $R^{-1}\xi\bar
z+\tau\phi^s(v+R^{-1} \bar z),$ are bounded by constants.

This means  that the wave packets  associated to $f_1$ and the patch of hypersurface
$S_1^s$ are essentially supported in tubes  $T_1$ of the form
	\[ T_{1}=\{(\xi,\tau):|\xi+\tau\nabla\phi^s(v)|\leq R, |\tau|\leq R^2\},
\]
respectively ``horizontal''  translates in $\xi$ of them (due to modulations). Notice the
standard fact that $T_1$ is a tube of dimension $R\times R\times  R^2$ whose long axis is
pointing in the direction  of the normal vector $N(v):=(\nabla\phi^s(v),-1)$ to $S_1^s$ at
the point $(v,\phi^s(v)).$

\smallskip
However, if $v=(\bar x_v,\bar y_v)\in U_2^s$, then $|\bar y_v|\sim 1$, so  that, by
\eqref{hessphis}, if $R^{-1}\le\delta,$ $\|H\phi^s(v')\|\sim 1/\delta$. To
bound $R^{-2}|\tau \bar z^tH\phi(v')\bar z|\lesssim 1$,  we thus here  need to assume that
$|\tau|\leq \delta R^2,$    and the wave packets associated to $f_2$ and patch of
hypersurface  $S_2^s$  are  thus essentially supported in shorter tubes $T_2$ of the form
	\[ T_{2}=\{(\xi,\tau):|\xi+\tau\nabla\phi^s(v)|\leq R, |\tau|\leq\delta R^2\},
\]
respectively ``horizontal''  translates in $\xi$ of them.
\smallskip

The wave packets associated to such tubes will be denoted by $\phi_{T_i},\, i=1,2.$

\smallskip
There is a technical obstacle here to be noticed, which is of a similar nature as a
related
problem that had arisen   in \cite{bmv16}: to ensure that the wave packets are tubes, we
need that $\delta R^2>R$, i.e.,  $R>\delta^{-1}.$ For the usual induction on scales
argument  this creates the difficulty that we  cannot simply induct in the standard  way
on
the scales $R>1.$
Instead, we change variables $R'=R\delta,$ and induct on the scales $R'>1$. The wave
packets $T_2$ are of then of  dimension
$\frac{R'}{\delta}\times\frac{R'}{\delta}\times\frac{R'^2}{\delta},$ where now indeed
$\frac{R'^2}{\delta}>\frac{R'}{\delta}$.
\medskip

Following further on the bilinear method, we have to  consider localized  estimates  at
scale $R'$ of the form
 \begin{align}\label{inducthypo}
	 \|\ext_{U_1}(f_1)\, \ext_{U_2}(f_2)\|_{L^p(Q(R'))} \leq C(\delta) C_\alpha
(R')^\alpha
\|f_1\|_2\|f_2\|_2,
 \end{align}
where $Q(R')$ is a cuboid determined by the wave packets, and need to push down the
exponent $\alpha$ by means of induction on scales. Wave packet decompositions then allow
to
reduce  these estimates to bilinear estimates for the associated wave packets.

The corresponding  $L^1$-estimate is trivial (compare \cite{lee05} , or \cite{bmv16}, for
details): since the wave packets of a given type arising in these decompositions are
almost
orthogonal, one easily finds by Cauchy-Schwarz' inequality that
\begin{align}\nonumber
	\|\sum_{T_1,T_2}\phi_{T_1}\phi_{T_2}\|_{1} \leq
&\|\sum_{T_1}\phi_{T_1}\|_2\|\sum_{T_2}\phi_{T_2}\|_{2}\leq \Big(\frac{R'^2}{\delta^2}\#
T_1\Big)^{1/2} \Big(\frac{R'^2}{\delta} \# T_2\Big)^{1/2}\\
	=& \delta^{-3/2} R'^2(\# T_1\, \# T_2)^{1/2}. \label{bilin1}
\end{align}
As for further $L^p$-estimates, grossly oversimplifying, the bilinear method allows to
devise some ``bad'' subset of $Q(R')$ whose contributions are simply  controlled by means
of  the induction on scales hypothesis, and a ``good'' subset, on which we can obtain a
strong $L^2$- estimate by means of  sophisticated geometric-combinatorial considerations,
essentially of the form (without going into details)
\begin{align}\label{bilin2}
	\|\sum_{T_1,T_2}\phi_{T_1}\phi_{T_2}\|_{2} \leq R^{-1/2} (\# T_1\, \# T_2)^{1/2}
	= \delta^{1/2}R'^{-1/2} (\# T_1\, \# T_2)^{1/2}.
\end{align}
For $1<p<2,$ interpolation between these estimates in \eqref{bilin1} and \eqref{bilin2}
gives
 \begin{align*}
	\|\sum_{T_1,T_2}\phi_{T_1}\phi_{T_2}\|_{p}
	\leq& (\delta^{-3/2}R'^2)^{2/p-1}(\delta ^{1/2}R'^{-1/2})^{2-2/p}  (\# T_1\#
T_2)^{1/2}
\\
	=& \delta^{5/2-4/p} (R')^{5/p-3}  (\# T_1\# T_2)^{1/2}.
\end{align*}
Since $R'\ge 1,$ again  grossly simplifying, this (very formal) argument will in the end
show that  for $p>5/3$ indeed the bilinear estimate \eqref{bilinestscaled} holds true.

\medskip
In this very rough description of the bilinear approach  in our setting we have suppressed
a number of subtle  and important issues which we shall explain next  in some more detail.

\medskip
Indeed, the proof of the crucial $L^2$- estimate \eqref{bilin2} requires more careful
considerations.
 For the combinatorial argument to work, we not only  need the lower bounds on the
 transversalities  given by  Lemma \ref{transvscaled}, but also have to make sure that
 the
 tubes $T_i$  of a given type (on which the wave packets are essentially supported) are
 separated  as the base point varies along  the intersection curves at distances of order
 $1/R.$

  Let us explain this in more detail.  We take a collection of $1/R$ - separated points
  along the projection  onto the $\bar z=(\bar x,\bar y)$ - plane of an intersection curve
  $\Pi^s_{\bar z_1',\bar z_2'}$ associated to  the patches of surface $S_1^s$ and $S_2^s.$
  For each point $\bar z$  of this collection we consider the point $\bar z_1:=\bar z-\bar
  z_2'\in U_1^s.$ We fix another point $p_0\in\mathbb R^3$ and consider all the tubes
  $T_1$
  which are  associated to such  base points $\bar z_1$ and which  pass through the given
  point $p_0.$
What the geometric-combinatorial  argument then requires is that  the directions of these
tubes be separated so that the tubes
$T_1\cap B(p_0, R^2/2)^c$ have bounded overlap (see \cite{v05}, \cite{lee05},
\cite{bmv16}).

Given the dimensions of the tubes $T_1$ of type $1,$ it is clear that this kind of
separation is achieved if the directions of the normals to the re-scaled surface $S^s$ at
those points $(\bar z_1,\phi^s(\bar z_1))$ are $R/R^2=1/R$ -  separated.

Similarly,  given the dimensions of the tubes $T_2$ of type $2,$ we shall also need that
the  directions of the normals to the surface at the points $(\bar z_2,\phi(\bar z_2)),$
for $\bar z_2:=\bar z-\bar z_1',$ are $R/(\delta R^2)=(1/\delta)(1/ R)$  - separated. The
sizes of the entry $\partial^2 \phi^s/\partial \bar y^2$ of the Hessian of $\phi^s$  in
\eqref{hessphis} at the points of $U_1^s$  respectively  $U_2^s$ and the fact that the
tangents $\omega$ to the curve $\gamma$ are essentially diagonal (compare \eqref{tangent})
guaranty that the desired separation condition is indeed satisfied.

\medskip

Another obstacle, which again already arose in  \cite{bmv16}, consists in setting  up the
base case for the induction on scales argument, i.e., setting up a suitable estimate of
the
form \eqref{inducthypo}  for some initial, possibly very large value of
the exponent $\alpha.$

In the classical setting of elliptic surfaces, the  na\"\i f  and easily established
estimate  of the form
\begin{align*}
	 \|\ext_{U_1}(f_1)\, \ext_{U_2}(f_2)\|_{L^p(Q(R'))}  \leq |Q(R')|^{1/p}
\|f_1\|_1\|f_2\|_1
\end{align*}
would work, but in our setting, this would not  give  the right power of $\delta$ needed
to
establish \eqref{inducthypo}.

 We therefore follow our approach in \cite{bmv16}(compare  Lemma 2.10,  2.11), which
 provides us with  the  following a-priori $L^2$-estimate (not relying on the
 afore-mentioned geometric argument):
\begin{align*}
	\|\sum_{T_1,T_2}\phi_{T_1}\phi_{T_2}\|_{2} \leq R^{-1/2} \sup_{i=1,2}\sup_{\Pi}(\#
\cT_i^\Pi)^{1/2}(\# T_1\, \# T_2)^{1/2},
\end{align*}
where $ \cT_i^\Pi$ denotes the  set of all tubes of type $i$ associated to base points
along a given  intersection curve $\Pi$ of translates of $S_1^s$ and $S_2^s$. We are done
if we can show that $\# \cT_i^\Pi$ is bounded by some power of $R'$ but independently of
$\delta$.\\
We already saw in \eqref{tangent} that the tangent $\omega$ of the intersection curve is
essentially diagonal. After scaling, $U_2^s$ is a set of dimensions $1\times 1$, but
$U_1^s$ is a rectangle of dimensions $1\times\delta$, so an essentially  diagonal
intersection curve can have length at most $\Landau(\delta)$. Since the separation of the
base points of our wave packets  along this curve  is of size $1/R=\delta/R'$, we see
that
indeed we must have
$\# \cT_i^\Pi\leq C R'$.

This completes our sketch of proof of Theorem \ref{bilinear}.
\hfill $\Box$\\

After distributing our preprint though the ArXiv, we learned from Timothy Candy that the
bilinear estimate in Theorem \ref{bilinear} could  also be deduced from his more
general bilinear estimates in Theorem 1.4 of \cite{can17}, after applying the crucial
scaling in $x$ that we use in Subsection \ref{proto}. The convexity assumptions on the sets $\Lambda_j$
in his theorem is not really necessary, as he pointed out to us. We wish
to thank him for informing us about this.


\setcounter{equation}{0}
\section{The Whitney-type decomposition and its overlap}\label{whitn}

\subsection{The Whitney-type decomposition of $V_1\times V_2$}\label{whitneydecomp}
Let $(V_1,V_2)$ be an admissible pair of strips as defined in Subsection \ref{preciseadmissible}.
Recall the definition of admissible pairs  of sets from the same subsection, and that we had also introduced
there the sets  $\cP^\de$  respectively $\tilde \cP^\de$ of  admissible pairs of type
1 respectively   type 2 at scale
$\de,$ and by $\cP$ respectively $\tilde \cP$ we had denoted the  corresponding unions over all dyadic
scales $\de.$

\begin{lemnr}\label{covering}
The following covering and overlapping properties hold true:
\begin{itemize}
\item[(i)] For fixed dyadic scale $\de,$ the subsets $U_1\times U_2, \, (U_1,U_2)\in
    \cP^\de,$ of $V_1\times V_2\subset Q\times Q$ are pairwise disjoint,  as likewise
    are the subsets $\tilde U_1\times \tilde U_2, \, (\tilde U_1,\tilde U_2)\in
    \tilde\cP^\de.$

\item[(ii)] If $\de$ and $\de'$ are dyadic scales,  and if  $(U_1,U_2)\in \cP^\de$ and
    $(U'_1,U'_2)\in \cP^{\de'},$ then the sets $U_1\times U_2$ and $U'_1\times U'_2$
    can only intersect if   $\de/\de' \sim_{2^7} 1.$ In the latter case, there is only
    bounded overlap. I.e., there is a constant  $M\le2^6$  such that for
    every $(U_1,U_2)\in \cP^\de $  there are at most $M$  pairs   $(U'_1,U'_2)\in
    \cP^{\de'}$  such that
$(U_1\times U_2)\cap ( U'_1\times U'_2)\ne \emptyset,$  and vice versa.

The analogous statements apply to admissible pairs in $\tilde\cP.$

\item[(iii)]  If  $(U_1,U_2)\in \cP^\de$ and  $(\tilde U_1,\tilde U_2)\in
    \tilde\cP^{\de'},$ then $U_1\times U_2$ and $\tilde U_1\times \tilde U_2$ are
    disjoint too, except possibly when both $\de,\de'\ge 1/800$ and
    $\de\sim_{2^{10}}\de'.$ In the latter case, there is only bounded overlap. I.e.,
    there is a constant  $N=\Landau (C_0)$   such that for every $(U_1,U_2)\in \cP^\de
    $  there are at most $N$  pairs   $(\tilde U_1,\tilde U_2)\in \tilde\cP^{\de'}$
    such that
$(U_1\times U_2)\cap (\tilde U_1\times \tilde U_2)\ne \emptyset,$  and vice versa.

\item[(iv)]  The product sets associated to all admissible pairs cover $V_1\times V_2$
    up to a set of measure $0,$  i.e.,
$$
V_1\times V_2=\Big(\bigcup\limits_{(U_1,U_2)\in \cP}U_1\times U_2
\Big)\cup\Big(\bigcup\limits_{(\tilde U_1,\tilde U_2)\in \cP}\tilde U_1\times \tilde
U_2\Big)
$$
in measure.
\end{itemize}
\end{lemnr}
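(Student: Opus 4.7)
The plan is to prove (i)--(iv) in sequence, exploiting the dyadic structure of the parameter sets $\I$ and $\I_1, \I_2$ together with the already-established transversality lemmas. The guiding principle throughout is that an intersection of product sets forces equality of transversality sizes at the common point, and Lemma \ref{sizeofdeltas} and Lemma \ref{sizeofdeltas2} then transfer this back to the scale parameters.

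For (i), fix $\de$. As $(x_1^0,y_1^0)$ ranges over $\I\times \I_1$, the parallelograms $U_1^{x_1^0,y_1^0,\de}$ are half-open and affinely equivalent (via the shear $(x,y)\mapsto(x+y_1^0 y,y)$) to a genuine dyadic tiling of a horizontal strip, hence pairwise disjoint. Assuming $z_1\in U_1\cap U_1'$ therefore forces $(x_1^0,y_1^0)=({x_1^0}',{y_1^0}')$, in which case the $U_2^{t_2^0,y_1^0,y_2^0,\de}$ with common $y_1^0,y_2^0$ are half-open curved slabs of transverse width $\rho^2\de$ translated along a parabolic curve by multiples of $\rho^2\de$, and hence $t_2^0={t_2^0}'$ as well. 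The argument for $\tilde\cP^\de$ is identical by symmetry.

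For (ii), suppose $(z_1,z_2)\in (U_1\times U_2)\cap(U_1'\times U_2')$ with scales $\de,\de'$. Applying Lemma \ref{sizeofdeltas} to both pairs gives $|\tau_{z_1}(z_1,z_2)|\sim_8 C_0^2\rho^2\de$ and $\sim_8 C_0^2\rho^2\de'$, so $\de/\de'\sim_{64} 1$; since both are dyadic this forces $\de/\de'\sim_{2^7} 1$ with at most a handful of admissible ratios. For bounded overlap, given $(U_1,U_2)\in\cP^\de$, only $\Landau(1)$ choices of $\de'$ remain, and for each such $\de'$ the parallelogram $U_1$ meets $\Landau(1)$ parallelograms $U_1'$ from the partition at scale $\de'$; condition \eqref{admissible1} then restricts ${t_2^0}'-{x_1^0}'$ to lie in a dyadic band of the same order of magnitude, leaving only boundedly many $t_2^0'$. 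For (iii), combine Lemmas \ref{sizeofdeltas} and \ref{sizeofdeltas2}: a common point $(z_1,z_2)$ would satisfy simultaneously $|\tau_{z_1}|\sim_8 C_0^2\rho^2\de$ and $|\tau_{z_1}|\sim_{1000}C_0^2\rho^2(1\vee\de')$, and symmetrically for $\tau_{z_2}$. The two chains force $\de\sim(1\vee\de')$ and $\de'\sim(1\vee\de)$ with combined comparison constant at most $8\cdot 1000<2^{13}$; in particular $\de,\de'\gtrsim 1$, and a careful tracking of the admissibility constants in \eqref{admissible1}--\eqref{admissible2} yields the quantitative threshold $\de,\de'\ge 1/800$ and $\de\sim_{2^{10}}\de'$. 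The overlap count $N=\Landau(C_0)$ follows from counting how many $(\tilde t_1^0,\tilde y_2^0,\tilde x_2^0)$ can place a type 2 parallelogram meeting a fixed $U_1\times U_2$, the factor $C_0$ accounting for the number of $\rho$-length sub-intervals of a $C_0\rho$-long $y$-band.

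For (iv), given $(z_1,z_2)\in V_1\times V_2$, use \eqref{TV2+TV3} and \eqref{yseparation} to split into Case 1 ($|\tau_{z_1}|\le|\tau_{z_2}|$) and Case 2. In Case 1, select the unique dyadic $\de$ with $|\tau_{z_1}(z_1,z_2)|\in [\tfrac{C_0^2}{8}\rho^2\de, \tfrac{C_0^2}{4}\rho^2\de)$, then pick $y_1^0\in \I_1$ with $0\le y_1-y_1^0<\rho(1\wedge\de)$, $x_1^0\in\I$ with $0\le x_1-x_1^0+y_1^0(y_1-y_1^0)<\rho^2\de$, and $t_2^0\in\I$ with $0\le x_2-t_2^0+y_2(y_2-y_1^0)<\rho^2\de$. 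Unwinding the identity \eqref{tauid} shows that the resulting $t_2^0-x_1^0$ lies in the window required by \eqref{admissible1}, and \eqref{admissible2} then follows from \eqref{TV2+TV3}. Case 2 is handled symmetrically by choosing a type 2 pair. Since the parametrizations are half-open, this produces the equality of (iv) up to a set of measure zero (namely the boundary hyperplanes of the various dyadic choices). The main obstacle I expect is precisely the bookkeeping in (iv): making sure that the constants built into the definition of admissibility (factors of $1/4$, $4$, $1/512$, $5$) absorb both the slack from Lemma \ref{sizeofdeltas} (factor of $8$) and the discretization error from rounding $y_1^0$ and $x_1^0$ to points of $\I_1,\I$, so that every $(z_1,z_2)$ is genuinely captured and no pair is miscounted in (ii)--(iii).
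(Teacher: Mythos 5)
Your overall plan follows the paper's: derive the scale constraints from Lemma~\ref{sizeofdeltas} (and its twin for type 2), get bounded overlap from the parametrization, and in (iv) construct an admissible pair by dyadic rounding and then verify \eqref{admissible1}, \eqref{admissible2} using the perturbation bounds \eqref{tausize1}, \eqref{tausize2}. Parts (i) and the scale-comparison halves of (ii) and (iii) are fine; there are, however, two places where your argument genuinely breaks.

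In (iii), your explanation of the $\Landau(C_0)$ overlap bound --- ``the number of $\rho$-length sub-intervals of a $C_0\rho$-long $y$-band'' --- does not correspond to the actual mechanism, and I don't see how it would produce the correct count: $\tilde y_2^0$ ranges over $\I_2$, which has only $\Landau(1)$ points once $\de' \ge 1/800$, and the $y$-ranges of $U_1,\tilde U_2$ lie in the \emph{fixed} intervals $I_1, I_2$ of length $\rho$, not $C_0\rho$. The factor $C_0$ comes instead from a \emph{slope} discrepancy: $U_1$ and $\tilde U_1$ are both $\sim(\rho^2\de)\times\rho$ boxes, but $U_1$ is sheared by $y_1^0$ while $\tilde U_1$ is sheared by $y_2^0$, and since $|y_1^0-y_2^0|\sim C_0\rho$, over the common $y$-height $\rho$ these tilt apart by $\sim C_0\rho^2$. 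Thus $\tilde U_1$ must fit inside a $(C_0\rho^2\de)\times\rho$ neighborhood of $U_1$, giving $\Landau(C_0)$ admissible positions for $\tilde t_1^0$. Without this geometric observation, the $C_0$ in the bound is unaccounted for.

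In (iv), your choice ``select the unique dyadic $\de$ with $|\tau_{z_1}(z_1,z_2)|\in [\tfrac{C_0^2}{8}\rho^2\de, \tfrac{C_0^2}{4}\rho^2\de)$'' does not produce an admissible pair. The discretization error from \eqref{tausize1} is $|\tau_{z_1^0}(z_1^0,z_2^0)-\tau_{z_1}(z_1,z_2)|\le 2C_0\rho^2\de$, so with your choice one only gets $|t_2^0-x_1^0|=|\tau_{z_1^0}(z_1^0,z_2^0)|\ge (\tfrac{C_0^2}{8}-2C_0)\rho^2\de$, which is strictly \emph{below} the required lower bound $\tfrac{C_0^2}{4}\rho^2\de$ of \eqref{admissible1} for every $C_0$, large or small. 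The window $[\tfrac{C_0^2}{4}, 4C_0^2)$ in \eqref{admissible1} is deliberately wide precisely so that, after rounding, the quantity stays inside it; to exploit this, one must position the initial window comfortably \emph{inside}, e.g.\ $|\tau_{z_1}(z_1,z_2)|\in[C_0^2\rho^2\de, 2C_0^2\rho^2\de)$, so that adding $\pm 2C_0\rho^2\de$ still lands in $[\tfrac{C_0^2}{4}\rho^2\de, 4C_0^2\rho^2\de)$ once $C_0$ is large. A secondary but related point: you write that \eqref{admissible2} ``then follows from \eqref{TV2+TV3},'' but this is only immediate when $\de$ is far from $1$; the case $\de\sim 1$ needs a separate lower bound (this is where the unusual constant $C_0^2/512$ in \eqref{admissible2} enters), which your sketch does not supply.

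A smaller remark on (ii): constraining $t_2^{0\prime}$ through \eqref{admissible1} alone yields $\Landau(C_0^2)$ candidates, since \eqref{admissible1} only localizes $t_2^{0\prime}-x_1^{0\prime}$ to a window of width $\sim C_0^2\rho^2\de'$ while the lattice spacing is $\rho^2\de'$. The cleaner route is to use $U_2\cap U_2'\neq\emptyset$ directly: the sets $U_2^{t_2^0,\cdot}$ at a fixed scale are translates by multiples of $\rho^2\de'$ in $x$, so this gives $\Landau(1)$ choices.
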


\begin{proof}
(i) Let $(U_1,U_2),(U'_1,U'_2)\in \cP^\de$ be different admissible pairs of type 1 at
 scale $\de.$ Then we shall see that already the sets $U_1$ and $U'_1$ will be disjoint.
 Indeed,  this will obviously be true if the corresponding  points $y_1^0$  are  different
 for $U_1$ and $U'_1,$ and  if they  do agree, then this will be true  because of
 different
 values of $x_1^0.$
 \medskip

 (ii) The first statement follows immediately from Lemma  \ref{sizeofdeltas}. To prove the second one, assume
 that $\de/\de' \sim_{2^7} 1,$ and that  $U_1=U_1^{x_1^0,y_1^0,\delta},
 U'_1=U_1^{(x')_1^0,(y')_1^0,\delta'}.$ Recall that $U_1=U_1^{x_1^0,y_1^0,\delta}$ is
 essentially a rectangular box of dimension $\sim \rho^2\de\times \rho(1\wedge \de),$
 containing the point $(x_1^0,y_1^0),$  whose longer side has slope $y_1^0$ with respect
 to
 the $y$-axis, and an analogous statement is true of $U'_1.$ If $U_1\cap U'_1\ne
 \emptyset,$ then we must have $|y_1^0-(y')_1^0|\le\rho(1\wedge \de),$ and so
 the  slopes for these two boxes can only differ of size  at $\rho(1\wedge \de),$ which
 implies that  both  boxes must be essentially of the same direction and dimension. This
 easily implies the claimed  overlapping properties, because  of the separation properties
 of the points in  $\I_1$ and $\I.$

 \medskip
 (iii) If either $\de<1/800,$ or $\de'<1/800,$  or if $\de\nsim_{2^{10}}\de',$ then  Lemma \ref{sizeofdeltas} and  Lemma \ref{sizeofdeltas2} show that $U_1\times U_2$ and  $\tilde U_1\times \tilde U_2$ must be disjoint. In
 the
 remaining case where both $\de,\de'\ge 1/800$ and  $\de\sim_{2^{10}}\de',$  assume that
 $U_1=U_1^{x_1^0,y_1^0,\delta}$ and
 $\widetilde{U}_1=\widetilde{U}_1^{x^0,y_2^0,\delta'}$  are so that  $U_1\cap \tilde
 U_1\ne
 \emptyset.$ Then observe that  both $U_1$ and $\tilde U_1$ are essentially rectangular
 boxes of dimension $\sim \rho^2\de\times \rho,$ where $U_1$ has slope $y_1^0$ with
 respect
 to the $y$-axis, and $\tilde U_1$ has slope $y_2^0$ with respect to the $y$-axis. Since
 $|y_1^0-y_2^0|\sim C_0 \rho\lesssim C_0 \rho\de,$ this  shows that $\tilde U_1$ must be
 contained within a rectangular box of dimension $\sim (C_0\rho^2\de)\times \rho$ around
 $U_1,$ so that  there are  at most $\Landau(C_0)$ sets $\tilde U_1$ of this type  which
 can intersect $U_1.$

\medskip
 (iv)   Let $(z_1,z_2)\in V_1\times V_2$. Without loss of generality, we may assume by
 symmetry that $|\tau_{z_1}(z_1,z_2)|\le  |\tau_{z_2}(z_1,z_2)|,$ i.e., that
$$
|x_2-x_1+y_2(y_2-y_1)|\leq|x_2-x_1+y_1(y_2-y_1)|.
$$
We shall then show that there is an admissible pair $(U_1,U_2)\in \cP$ such that
$(z_1,z_2)\in U_1\times U_2$ (in the other case, we would accordingly find an admissible
pair  of type 2 with this property). We shall also assume that $|\tau_{z_1}(z_1,z_2)|>0,$
since  the set of pairs $(z_1,z_2)$ with $\tau_{z_1}(z_1,z_2)=0$ forms a set of measure
$0.$

Then there is  a unique dyadic $0<\delta\lesssim\rho^{-2}$ such that
$$C_0^2\rho^2\de\le |\tau_{z_1}(z_1,z_2)|<2C_0^2\rho^2\de.
$$

Chose  $y_1^0\in \I_1$ such that $0\le y_1-y_1^0< \rho(1\wedge\de),$ and then
$x_1^0,t_2^0\in \I$ such that
\begin{align*}
	0\le x_1-x_1^0+y_1^0(y_1-y_1^0)< \rho^2\de \quad
	\text{ and } \quad 0\le x_2-t_2^0+y_2(y_2-y_1^0)< \rho^2\de.
\end{align*}
Define as in \eqref{pointsinU} $z_1^0:=(x_1^0,y_1^0)$ and $z_2^0=(t_2^0-y_2^0(y_2^0-y_1^0),
y_2^0).$
We observe that, as  in the proof of Lemma \ref{sizeofdeltas}, these estimates  imply  that
the estimates \eqref{tausize1} and   \eqref{tausize2}
remain valid. In particular, we immediately see that
$|\tau_{z_1^0}(z_1^0,z_2^0)|\sim_4 C_0^2\rho^2\de,$ so that
\eqref{admissible1} is satisfied.

\smallskip
As for condition \eqref{admissible2},  if $\de>8,$ by means of \eqref{TV2+TV3'} and
\eqref{yseparation}  we can estimate
$$
|\tau_{z_2^0}(z_1^0,z_2^0)|\ge
|\tau_{z_1^0}(z_1^0,z_2^0)|-C_0^2\rho^2\ge
C_0^2\rho^2(\de/4-1)\ge
C_0^2\rho^2(1\vee \de)/8.
$$
On the other hand, if $\de<1/32,$ then we may estimate
$$
|\tau_{z_2^0}(z_1^0,z_2^0)|\ge
C_0^2\rho^2/4 -|\tau_{z_1^0}(z_1^0,z_2^0)|\ge
C_0^2\rho^2(1/4-4\de)\ge C_0^2\rho^2(1\vee \de)/8.
$$
What remains is the case where $1/32\le \de\le 8.$  Here we use the estimate
$$
|\tau_{z_2}(z_1,z_2)|\ge  |\tau_{z_1}(z_1,z_2)|\ge C_0^2\rho^2\de\ge  C_0^2\rho^2/32,
$$
which by \eqref{tausize2} implies
$$
|\tau_{z_2^0}(z_1^0,z_2^0)|\ge
C_0^2\rho^2/32-6C_0 \rho^2(1\vee\de)\ge C_0^2\rho^2/32-48C_0 \rho^2\ge
C_0^2\rho^2/64\ge
C_0^2\rho^2(1\vee \de)/512,
$$
if we choose $C_0$ sufficiently large.

Moreover, note that we always have
$$
|\tau_{z_2^0}(z_1^0,z_2^0)|\le
|\tau_{z_1^0}(z_1^0,z_2^0)|+C_0^2\rho^2\le 5C_0^2\rho^2 (1\vee
\de).
$$
Thus we have also verified \eqref{admissible2}. Hence,
$(U_1^{x_1^0,y_1^0,\delta},U_2^{t^0_2,y_1^0,y^0_2,\delta})$ is an admissible pair of type 1 at
scale $\delta.$
\end{proof}

\medskip

\subsection{Handling the overlap in the Whitney-type decomposition of $V_1\times V_2$}

For  $r=0,\dots, 9,$ we define the subset  $\cP_r:=\bigcup_j \cP^{2^{10j+r}}$ of $\cP.$ To
these subsets of admissible pairs, we associate the subsets
\begin{align}\label{Ar}
A_r:=\bigcup\limits_{(U_1,U_2)\in \cP_r}U_1\times U_2, \qquad \tilde
A_r:=\bigcup\limits_{(\tilde U_1,\tilde U_2)\in \tilde \cP_r}\tilde U_1\times \tilde
U_2,\qquad (r=0,\dots, 9),
\end{align}
of $V_1\times V_2.$ Then Lemma \ref{covering} shows the following:
 \begin{itemize}
\item[(i)] The unions in \eqref{Ar} are disjoint unions of the sets $U_1\times U_2,$
    respectively $\tilde U_1\times \tilde U_2.$
\item[(ii)]  There is a fixed number $N\gg 1$ such that the following hold true:

 For given $r\ne r'$ and $(U_1,U_2)\in \cP_r,$ there are at most $N$ admissible pairs
 $(U'_1,U'_2)\in \cP_{r'} $ such that
 $(U_1\times U_2)\cap ( U_1'\times  U_2')\ne \emptyset,$  and vice versa. Similarly,
 for given $r,r'$ and  $(U_1,U_2)\in \cP_r,$ there are at most $N$ admissible pairs
 $(\tilde U_1,\tilde U_2)\in\tilde \cP_{r'} $ such that
 $(U_1\times U_2)\cap (\tilde U_1\times \tilde U_2)\ne \emptyset,$ and vice versa.
 \item[(iii)] \hskip 2cm $V_1\times V_2=\bigcup\limits_{r=0}^9(A_r\cup \tilde A_r).$
 \end{itemize}

 We shall make use of the following identity, which follows easily by induction on $m:$ If
 $B_1,\dots,B_m$ are subsets of a given set $X,$ then
 \begin{align*}
 \chi_{B_1\cup\cdots\cup B_m}=\sum_{\emptyset\ne J\subset \{1,\dots, m\}} (-1)^{\# J+1}
 \chi_{\bigcap_{j\in J}B_j}.
\end{align*}
 Applying this to (iii), we find that
 \begin{align}\label{chiunion}
 \chi_{V_1\times V_2}=\sum_{\emptyset\ne J,J'\subset \{0,\dots, 9\}}(-1)^{\#J+\#J'+1}
 \chi_{(\bigcap_{r\in J}A_r)\cap( \bigcap_{r'\in J'}\tilde A_{r'}) }.
\end{align}
This will allow to reduce our considerations to finite intersections of the set $A_r$ and
$\tilde A_{r'}.$ Indeed, let us define
$$
E(F)(\xi):=\int_{Q\times Q} F(z,z')  e^{-i[\xi\cdot(z,\phi(z))+\xi\cdot(z',\phi(z'))]}
\eta(z)\eta(z') \, dz dz',
$$
for any integrable function $F$ on $Q\times Q,$    so that in particular $E(f\otimes
g)=\ext(f) \ext(g).$ Then, by \eqref{chiunion}, if $f$ is supported in $V_1$ and $g$ in
$V_2,$
\begin{align}\label{chiunionE}
 \ext(f) \ext(g)=\sum_{\emptyset\ne J,J'\subset \{0,\dots, 9\}}(-1)^{\#J+\#J'+1}E\Big(
 (f\otimes g)\, \chi_{(\bigcap_{r\in J}A_r)\cap( \bigcap_{r'\in J'}\tilde A_{r'}))}\Big).
\end{align}

We may thus reduce considerations to restrictions of $f\otimes g$ to any of the
intersection
sets in \eqref{chiunionE}. So, let us  fix non empty subsets $J,J'\subset \{0,\dots, 9\}$
and  put
$$
B:= (\bigcap_{r\in J}A_r)\cap( \bigcap_{r'\in J'}\tilde A_{r'}).
$$
We then  choose $r_0\in J$ and note that $B\subset A_{r_0},$ where $A_{r_0}$ is the
disjoint union of the product sets $U_1\times U_2$ over all admissible pairs $(U_1,U_2)\in
\cP_{r_0},$ so that
\begin{align*}
B=\overset{\cdot}{\bigcup\limits_{(U_1,U_2)\in \cP_{r_0}}}B\cap (U_1\times U_2).
\end{align*}
And, by  (i),(ii),  each intersection
$B\cap (U_1\times U_2)$ is the finite disjoint union
\begin{align*}
B\cap (U_1\times U_2)=\overset{\cdot}{\bigcup\limits_{\iota}}\, \Omega_1^\iota\times
\Omega_2^\iota, \qquad (U_1,U_2)\in \cP_{r_0},
\end{align*}
of at most $N^{\#J+\#J'}$ measurable product subsets $\Omega_1^\iota\times
\Omega_2^\iota\subset U_1\times U_2.$

Since we shall   have to estimate the $L^p$- norm of $E((f\otimes g)\, \chi_B)$  later,
note that
$$
E((f\otimes g)\chi_{B\cap (U_1\times U_2)})=\sum\limits_\iota \ext(f\chi_{\Omega_1^\iota})
\ext(g\chi_{\Omega_2^\iota}).
$$
Thus, if we write our estimates in Corollary \ref{bilinear2} in the form
$ \|\ext_{U_1}(f)\ext_{U_2}(g)\|_p \leq C_{p,q}(U_1,U_2) $ \hfill $\times \|f\|_q\|g\|_q,$
since $\Omega_i^\iota\subset U_i,$ we see that also
$$
\|\ext(f\chi_{\Omega_1^\iota}) \ext(g\chi_{\Omega_2^\iota})\|_p \leq C_{p,q}(U_1,U_2)
\|f\chi_{U_1}\|_q\|g\chi_{U_2}\|_q,
$$
hence, for every  $(U_1,U_2)\in \cP_{r_0},$
\begin{align*}
\|E((f\otimes g)\chi_{B\cap (U_1\times U_2)}\|_p\leq  N^{\#J+\#J'} C_{p,q}(U_1,U_2)
\|f\chi_{U_1}\|_q\|g\chi_{U_2}\|_q.
\end{align*}
This shows that on the  set $B,$ we essentially get the same  estimates for the
contributions by subsets $U_1\times U_2$  that we get on $A_{r_0}.$  This will allow  us
to
reduce our considerations in the next section to the sets $A_r$ (respectively $\tilde
A_r$),  which are already disjoint unions of product set $U_1\times U_2$ associated to
admissible pairs.

 Observe finally that, for given $r,$
\begin{align}\label{ChiAr}
(f\otimes g)\chi_{A_r}=\sum\limits_{(U_1,U_2)\in \cP_{r}} (f\chi_{U_1}) \otimes
(f\chi_{U_2}).
\end{align}

\setcounter{equation}{0}
\section{Passage to linear restriction estimates  and proof of Theorem
\ref{mainresult}}\label{bilinlin}
To prove Theorem \ref{mainresult}, assume that $r>10/3$ and $1/q'>2/r,$ and put $p:=r/2,$
so that $p>5/3$, $1/q'>1/p.$ By interpolation with the trivial estimate for
$r=\infty,q=1,$
it is enough to prove the result  for $r$ close to $10/3$ and  $q$ close to 5/2, i.e.,
$p$
close to $5/3$ and $q$ close to 5/2. Hence, we may  assume that $p<2,$ $p<q<2p.$ Also, we
can assume that $\supp f\subset\{(x,y)\in Q:  y>0\}.$

We prove the linear estimates in two steps, following our steps in the construction in
Subsection \ref{pairs of sets}.
\smallskip

In a first step, we  fix  a scale $\rho$ and  shall prove  uniform bilinear Fourier
extension estimates  for admissible  pairs $V_1\backsim V_2$   of strips at scale $\rho$
(as defined in \eqref{Vi} of Subsection \ref{pairs of sets}).   Our goal will be to prove
the following

\begin{lemnr}\label{V1V2bilin}
 If  $V_1\backsim V_2$ form an  admissible pair of ``strips''
 $V_i=V_{j_i,\rho}=I_{j_i,\rho}\times [-1,1], \, i=1,2,$  at scale $\rho$ within
$Q,$  and if $f\in L^q(V_1)$ and $g\in L^q(V_2),$ then for the range of $p$'s and $q$'s
described above  we have
\begin{align}\label{VVbilin}
\|\ext_{V_1} (f) \ext_{V_2} (g)\|_p\lesssim  C_{p,q} \,\rho^{2(1-1/p-1/q)} \|f\|_q\,
\|g\|_q \  \text{for all} \ f\in L^q(V_1),  g\in L^q(V_2).
\end{align}
\end{lemnr}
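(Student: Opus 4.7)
The plan is to combine the bilinear estimates of Theorem \ref{bilinear2} on individual admissible pairs $(U_1,U_2)\in\cP^\de$ with the Whitney-type decomposition of Section \ref{whitn} and an $L^p$ almost-orthogonality argument tailored to the two geometries (straight box vs.\ curved box) of admissible pairs. First I would apply the identity \eqref{chiunionE} together with the covering $V_1\times V_2=\bigcup_{r=0}^9(A_r\cup\tilde A_r)$: this reduces matters to estimating $E((f\otimes g)\chi_B)$ for $B$ a finite intersection of sets $A_r,\tilde A_{r'}$. Since $B\subset A_{r_0}$ for some $r_0$, and $A_{r_0}$ is a \emph{disjoint} union of product sets $U_1\times U_2$ over $(U_1,U_2)\in\cP_{r_0}$ (with only bounded multiplicity of the further subdivision into $\Omega_1^\iota\times\Omega_2^\iota$ by Lemma \ref{covering}), it suffices to bound $\sum_{(U_1,U_2)\in\cP_r}\ext_{U_1}(f_{U_1})\ext_{U_2}(g_{U_2})$ with $f_{U_i}:=f\chi_{U_i}$, and symmetrically for $\tilde\cP_r$.

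Next I would split the sum over $\cP_r$ into dyadic scales $\de$, and aim for the scale-uniform estimate
\begin{equation*}
\Bigl\|\sum_{(U_1,U_2)\in\cP^\de}\ext_{U_1}(f_{U_1})\ext_{U_2}(g_{U_2})\Bigr\|_p
\lesssim_{p,q}\de^{\beta(\de,p,q)}\rho^{2(1-1/p-1/q)}\|f\|_q\|g\|_q,
\end{equation*}
where $\beta(\de,p,q)$ is strictly positive for our range of $p,q$. Summing a geometric series over dyadic $\de$ then yields \eqref{VVbilin} (up to the factor $10$ coming from the splitting over $r$). Positivity of the exponent follows from the assumptions: for $\de>1$ one uses $1/p+1/q<1$ (a consequence of $p>5/3$, $p<q<2p$, $1/q'>1/p$), while for $\de\le 1$ one uses the stronger $5-3/q-6/p>0$, which holds in the same range.

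To obtain the scale-$\de$ estimate, I would apply an $L^p$ almost-orthogonality principle (valid for $p<2$) of the form
\begin{equation*}
\Bigl\|\sum\ext_{U_1}(f_{U_1})\ext_{U_2}(g_{U_2})\Bigr\|_p^p
\lesssim\sum\bigl\|\ext_{U_1}(f_{U_1})\ext_{U_2}(g_{U_2})\bigr\|_p^p,
\end{equation*}
using frequency disjointness of the supports of the individual products. In the straight-box regime $\de\gtrsim 1$, the $U_i$ are essentially parallelepipeds of dimensions $\rho^2\de\times\rho$ and the classical disjointness in the first two frequency coordinates suffices. In the curved-box regime $\de\ll 1$, however, the naive argument loses too much; following the strategy outlined in the introduction, I would subdivide each curved box $U_2$ into smaller $\rho^2\de\times\rho^2\de$ squares, exploit the Fourier-disjointness of the corresponding surface pieces (crucially also in the third frequency coordinate, which is not needed in the elliptic or saddle case), then re-assemble the squares back into curved boxes by means of Rubio de Francia's square function inequality for rectangle multipliers combined with Khintchine's inequality. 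After this almost-orthogonal reduction, apply Theorem \ref{bilinear2} termwise, then use H\"older in $q$ together with the disjointness of $\{U_i\}$ within $V_i$ to collect $\sum_{U_1}\|f_{U_1}\|_q^q\le\|f\|_q^q$ and likewise for $g$; the factor $\rho^{6(1-1/p-1/q)}$ from Theorem \ref{bilinear2} combines with the $\rho$-losses from the H\"older step to produce the stated $\rho^{2(1-1/p-1/q)}$.

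The principal obstacle is precisely the curved-box case: the classical $L^p$ almost-orthogonality is insufficient because the curved boxes $U_2$ are not frequency-localized in a way compatible with simple disjointness in the first two coordinates. Upgrading this step via the Rubio de Francia square function (after reducing to actual rectangular boxes) while tracking the correct powers of $\de$ and $\rho$, and reassembling via Khintchine without loss, is the technical heart of the argument and must be carried out uniformly across all admissible pairs at scale $\de$.
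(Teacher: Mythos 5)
Your overall strategy matches the paper's: reduce via \eqref{chiunionE} to the sets $A_r$, split into dyadic scales $\de$, use $L^p$ almost-orthogonality to break up the sum, in the curved-box regime upgrade the orthogonality by subdividing $U_2$, applying Rubio de Francia's square-function estimate, and reassembling via Khintchine, then apply Theorem~\ref{bilinear2} termwise, and finally sum a geometric series in $\de$. This is the right architecture and you have identified the correct key tools.

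There is, however, a genuine gap in the final bookkeeping step, which is not cosmetic. You write that you will ``use H\"older in $q$ together with the disjointness of $\{U_i\}$ within $V_i$ to collect $\sum_{U_1}\|f_{U_1}\|_q^q\le\|f\|_q^q$ and likewise for $g$.'' For $U_1$ this is fine: at fixed $\de$ the sets $U_{1,i,j}$ are pairwise disjoint over all $(i,j)$. But for $U_2$ it is false. Each $U_{2,i',j}$ has full height $\rho$ and sits in the same strip $V_2$; as the index $j$ (which records the ``slope'' parameter $y_1^0=j\rho\de$ of the partner box $U_1$) varies over its $\Landau(1/\de)$ allowed values, the curved boxes $U_{2,i',j}$ sweep out overlapping sets. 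Consequently $\sum_{i',j}\|g_{i',j}\|_q^q$ is only bounded by $\Landau(1/\de)\,\|g\|_q^q$, not by $\|g\|_q^q$. If one proceeds with a symmetric Cauchy--Schwarz in $(i,j)$ and $(i',j)$ as you suggest, one picks up a loss $\de^{-1/q}$, and the resulting exponent is $5-4/q-6/p$, which is \emph{negative} at the endpoint $p=5/3$, $q=5/2$ (it equals $-1/5$ there), so the geometric sum over $\de\le 1$ would not converge and the argument collapses precisely in the range one cares about.

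The paper's proof avoids this by being asymmetric. Fixing $j$, one applies Cauchy--Schwarz only in $i$ and $i'$, using disjointness of $\{U_{2,i',j}\}_{i'}$ at fixed $j$ to bound the $g$-factor by $\|g\|_q$ (and the $f$-factor by $\|f_j\|_q$, where $f_j=f\chi_{V_{j,\rho}}$). Only then does one sum over the $\Landau(1/\de)$ values of $j$, using that the strips $V_{j,\rho}$ are disjoint; H\"older in this $j$-sum costs a factor $(1/\de)^{1/p-1/q}$, which is the correct and strictly smaller loss (because $q<2p$ gives $1/p-1/q<1/q$). This turns the termwise exponent $5-3/q-6/p$ from Theorem~\ref{bilinear2} into the final exponent $5-2/q-7/p=3+2/q'-7/p>3-5/p>0$ for $p>5/3$. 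Your stated condition ``$5-3/q-6/p>0$'' is the exponent \emph{before} the $j$-summation loss, so it overstates the margin and would mislead in a borderline application. Relatedly, the loss you attribute to ``$\rho$-losses from the H\"older step'' is actually a $\de$-loss in the curved-box regime; the factor $\rho^{6(1-1/p-1/q)}$ from Theorem~\ref{bilinear2} is already $\le\rho^{2(1-1/p-1/q)}$ since $\rho\le1$ and $1-1/p-1/q>0$, with no $\rho$-gain needed (whereas in the straight-box regime the $\rho$-improvement comes from the $\de$-sum running up to $\rho^{-2}$).

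Two smaller inaccuracies: the curved box $U_{2,i',j}$ is subdivided into pieces $U^k_{2,i',j}$ of dimensions $\rho^2\de\times\rho\de$ (the parabolic scaling $(x,y)\mapsto(\rho^2 x,\rho y)$ applied to $\de\times\de$ squares), not $\rho^2\de\times\rho^2\de$; and before Rubio de Francia one also needs the preliminary $L^p$-orthogonality of Lemma~\ref{lpsquare}, which groups the indices $i$ into blocks of length $\de^{-1}$ so that the sumsets $U_{1,i,j}+U_{2,i',j}$ within one block lie in a fixed $x$-window of width $\sim\rho^2$; you subsumed this under a single ``$L^p$ almost-orthogonality principle,'' which is defensible but glosses over the two-step structure.
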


\

We remark that, eventually, we shall choose $f=g$,  but for the arguments to follow it is
helpful to distinguish between  $f$ and $g$.

\medskip
 \begin{proof}
 To begin with, observe that by  means of an affine linear transformation  of the form
 \eqref{changeofvariables1},  we may ``move the strips $V_1, V_2$  vertically'' so that
 $j_1=0,$ which means  that $V_1$ contains the origin. {Also, by \eqref{admissibleV}, $C_0/8<|j_2|<C_0/2.$} This
 we shall assume throughout
 the
 proof. \smallskip

We  recall  from the previous section that it is essentially  sufficient  to consider
$E((f\otimes g)\chi_{A_r})$  in place of $\ext_{V_1} (f) \ext_{V_2} (g)$ (and similarly
for
$\tilde A_r$ in place of $A_r$). But then \eqref{ChiAr} shows that we may decompose
$(f\otimes g)\chi_{A_r}=\sum_\delta \sum_{i,i',j} f_{i,j}^\delta\otimes g_{i',j}^\delta$,
where $f_{i,j}^\delta:=f\chi_{U_{1}^{i\rho^2\de,j\rho(1\wedge \de),\delta}}$,
$g_{i',j}^\delta:=g\chi_{
U_{2}^{i'\rho^2\de,j\rho(1\wedge \de),{j_2\rho},\delta}},$ and where  each
$\big(U_{1}^{i\rho^2\de,j\rho(1\wedge \de),\delta},
U_{2}^{i'\rho^2\de,j\rho(1\wedge \de),{j_2\rho},\delta}\big)$ forms an
admissible pair,  i.e., \eqref{admissible1}, \eqref{admissible2} are satisfied. This
means
in particular, {by \eqref{admissible1}} that  $ |i-i'|\sim C_0^2.$ The summation  in $\de$ is here meant as
summation
over all dyadic $\de$ such that $\de\lesssim \rho^{-2}.$ We may and shall also assume that
$f$ and $g$ are supported on the set $\{y>0\}.$ Then
\begin{align}\label{EfgA}
E((f\otimes g)\chi_{A_r})
	=  \sum_{\delta{\ge1/10}}\sum_{i,i'} \widehat{f_{i}^\delta
d\sigma}\widehat{g_{i'}^\delta d\sigma}+ \sum_{{\delta< 1/10}}\sum_{i,i',j} \widehat{f_{i,j}^\delta
d\sigma}\widehat{g_{i',j}^\delta d\sigma}.
	\end{align}

The first sum (which collects all straight box terms)  can be treated by more classical  arguments (compare,
e.g., \cite{lee05} or  \cite{v05}), which in view of the first  estimate in Theorem \ref{bilinear2} then leads
to
a  bound for the contribution of that sum to $\|\ext_{V_1} (f) \ext_{V_2} (g)\|_p$ in \eqref{VVbilin} of the
order
$$
\sum_{1\lesssim \de\lesssim (\rho^2)^{-1}}C_{p,q}\, \,(\de\rho^3)^{2(1-\frac 1p-\frac 1q)} \|f\|_q\|g\|_q
\lesssim\rho^{2(1-\frac 1p-\frac 1q)} \|f\|_q\|g\|_q,
$$
as required. We  leave the details  to the interested reader.

We  shall therefore concentrate on the second sum in \eqref{EfgA}  (which collects all {curved} box terms)  in
which the admissibility conditions reduce to $|i-i'|\sim C_0^2.$

\smallskip

Let us then  fix $\delta\le1/10,$ and simplify notation by writing  $f_{i,j}:=f_{i,j}^\delta,$
 $g_{i,j}:=g_{i,j}^\delta,$ and $U_{1,i,j}:=U_{1}^{i\rho^2\de,j\rho(1\wedge \de),\delta}$,
 $
 U_{2,i',j}:=
 U_{2}^{i'\rho^2\de,j\rho(1\wedge \de),j_2\rho,\delta}$.

 As a first step in proving estimate \eqref{VVbilin}, we exploit some almost orthogonality
 with respect to the $x$-coordinate:

 \begin{lemnr}\label{lpsquare} For $1\le p\le 2,$  we have
\begin{align}\label{psquarefunc}
	\big\|\sum_{j,\,i,|i-i'|\sim C_0^2}\, \widehat{f_{i,j}d\sigma}\,
\widehat{g_{i',j}d\sigma}\big\|_{p}^p
	\lesssim \sum_{N=0}^{\rho^{-2}}\Big\|\sum_{i\in[N\de^{-1},(N+1)\de^{-1}]\, ,\atop
|i-i'|\sim C_0^2,\,j}  \widehat{f_{i,j}d\sigma}\,\widehat{g_{i',j}d\sigma}\Big\|_{p}^p.
\end{align}
\end{lemnr}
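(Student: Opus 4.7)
The plan is to interpret the right-hand side as the decomposition of the left one into blocks
\[
F_N\;:=\;\sum_{i\in[N\de^{-1},(N+1)\de^{-1})}\,\sum_{|i-i'|\sim C_0^2,\,j}\widehat{f_{i,j}\,d\sigma}\,\widehat{g_{i',j}\,d\sigma},
\]
and to establish the almost orthogonality $\|\sum_N F_N\|_p^p\lesssim\sum_N\|F_N\|_p^p$ (valid for $1\le p\le 2$) via a duality argument based on Rubio de Francia's square function inequality, once I show that each $F_N$ is the Fourier transform of a measure $\mu_N=\sum(f_{i,j}\,d\sigma)\ast(g_{i',j}\,d\sigma)$ whose projection onto the first spatial coordinate is confined to an interval $I_N\subset\R$, and that the family $\{I_N\}_N$ has bounded overlap.

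\smallskip

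\textit{Geometric localization.} Since $\de\le 1/10$ one has $1\wedge\de=\de$, so $U_{1,i,j}=U_1^{i\rho^2\de,\,j\rho\de,\,\de}$ and $U_{2,i',j}=U_2^{i'\rho^2\de,\,j\rho\de,\,j_2\rho,\,\de}$ in the notation of \eqref{whitneybox1}. Using $y_1^0=j\rho\de\lesssim\rho$ on both sets, and $y_2\sim j_2\rho\sim C_0\rho$ on $U_{2,i',j}$ (since $|j_2|\sim C_0$ by \eqref{admissibleV}), direct substitution yields
\begin{align*}
x_1&\;=\;i\rho^2\de+\Landau(\rho^2\de),&&(x_1,y_1)\in U_{1,i,j},\\
x_2&\;=\;i'\rho^2\de-j_2^2\rho^2+\Landau(C_0\rho^2),&&(x_2,y_2)\in U_{2,i',j}.
\end{align*}
With $|i-i'|\sim C_0^2$ and $i\in[N\de^{-1},(N+1)\de^{-1})$, the projection of $\supp\mu_N$ onto the first coordinate therefore lies in an interval $I_N$ of length $\Landau(C_0^2\rho^2)$ centered at $2N\rho^2-j_2^2\rho^2$. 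Consecutive centers are spaced by $2\rho^2$, so the overlap multiplicity of $\{I_N\}_N$ is bounded by some $M=M(C_0)$; after splitting $\{N\}$ into $M$ sub-collections on each of which the $I_N$ are pairwise disjoint, it suffices to prove the estimate for one such sub-collection.

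\smallskip

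\textit{Rubio de Francia via duality.} Let $P_N$ denote the Fourier multiplier in the $\xi_1$-variable with symbol $\chi_{I_N}$, so that $P_N F_N=F_N$. For any test function $G$ with $\|G\|_{p'}\le 1$,
\[
\Big|\int\sum_N F_N\,\overline G\,d\xi\Big|
\;=\;\Big|\sum_N\int F_N\,\overline{P_N G}\,d\xi\Big|
\;\le\;\Bigl(\sum_N\|F_N\|_p^p\Bigr)^{\!1/p}\Bigl(\sum_N\|P_N G\|_{p'}^{p'}\Bigr)^{\!1/p'}
\]
by H\"older in both the integral and the counting sum. Since $p'\ge 2$, the embedding $\ell^2\hookrightarrow\ell^{p'}$ gives $\sum_N|P_N G|^{p'}\le(\sum_N|P_N G|^2)^{p'/2}$ pointwise, and Rubio de Francia's square function estimate for disjoint intervals, applied in the $\xi_1$-variable with $(\xi_2,\xi_3)$ frozen and then integrated, yields
\[
\Bigl(\sum_N\|P_N G\|_{p'}^{p'}\Bigr)^{\!1/p'}\;\le\;\Bigl\|\bigl(\sum_N|P_N G|^2\bigr)^{\!1/2}\Bigr\|_{p'}\;\lesssim\;\|G\|_{p'}.
\]
Taking the supremum over $G$ gives $\|\sum_N F_N\|_p\lesssim(\sum_N\|F_N\|_p^p)^{1/p}$, which is the claim.

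\smallskip

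The main obstacle is the geometric bookkeeping: the term $y_2(y_2-y_1^0)$ in the definition of the curved box $U_{2,i',j}$ spreads the $x_2$-support by $\sim C_0\rho^2$, which is much larger than the width $\rho^2\de$ of a single curved box. One must check carefully that this spread, once accumulated over all the indices contributing to a single $N$-block, still fits in an interval of length $\Landau(C_0^2\rho^2)$, so that the $I_N$ retain bounded overlap and the Rubio de Francia mechanism applies. That $C_0$ is a fixed constant independent of $\rho$ and $\de$ is what makes this possible.
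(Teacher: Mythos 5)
Your proof is correct and follows the same geometric idea as the paper -- you track the $\xi_1$-projection of $\mathrm{supp}\big((f_{i,j}d\sigma)*(g_{i',j}d\sigma)\big)$, confine the $N$-th block to an interval of width $\Landau(C_0^2\rho^2)$ around $2N\rho^2-j_2^2\rho^2$, and invoke bounded overlap of these intervals to conclude. Your center differs from the paper's by the fixed shift $j_2^2\rho^2$ (the paper absorbs this into the $\Landau(C_0^2\rho^2)$ error and quotes the center as simply $2N\rho^2$), but that is immaterial.

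Where you diverge is in the tool used to deduce the $L^p$ almost-orthogonality $\|\sum_N F_N\|_p^p\lesssim\sum_N\|F_N\|_p^p$. The paper invokes Lemma~6.1 of \cite{TVV}, which proceeds by a direct interpolation argument between the trivial endpoints $\ell^1(L^1)\to L^1$ and $\ell^2(L^2)\to L^2$, exactly tailored for $1\le p\le 2$. You instead dualize and apply Rubio de Francia's square function inequality in the $\xi_1$-variable for $p'\ge 2$, together with the embedding $\ell^2\hookrightarrow\ell^{p'}$. Your route is perfectly valid and self-contained, though it is a heavier tool than strictly necessary here: Rubio de Francia is essential for arbitrary collections of intervals in $L^{p'}$ with $p'$ large, but for the reverse estimate at $p\le 2$ with one-dimensional intervals the elementary interpolation suffices. (The paper does use Rubio de Francia in the subsequent Lemma~\ref{squaref}, where the finer decomposition into small cubes genuinely requires it.) Your handling of the bounded overlap by splitting into $M(C_0)$ genuinely disjoint sub-families before applying Rubio de Francia is correct, and your use of $y_1^0\lesssim\rho$, $y_2\sim j_2\rho$, $|i-i'|\sim C_0^2$ to localize $x_1+x_2$ is the right geometric bookkeeping.
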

\noindent {\it Proof of Lemma \ref{lpsquare}:} Assume that $
i\in[N\de^{-1},(N+1)\de^{-1}],$ and that $z_1=(x_1,y_1)\in U_{1,i,j}$ and
$z_2=(x_2,y_2)\in U_{2,i',j},$ where $|i-i'|\sim C_0^2,$ so that  $(U_{1,i,j},
U_{2,i',j})\in \cP^\de$ is an admissible pair. Then, by Lemma \ref{covering} (a), we have
$$
|x_2-x_1+y_2(y_2-y_1)|\sim_8 C_0^2\rho^2\de,
$$
where $|y_2||y_2-y_1|\le C_0\rho\, C_0 \rho$  by \eqref{yseparation}, so that we may
assume
that
$
|x_2-x_1|\le 9\, C_0^2\rho^2.
$
This implies that $x_1+x_2=2x_1+\Landau(\rho^2),$ and thus
$x_1+x_2=2N\rho^2+\Landau(\rho^2),$ where the constant in the error term is of order
$C_0^2,$  hence
$$
U_{1,i,j}+U_{2,i',j}\subset [2N\rho^2-10\, C_0^2\rho^2,2N\rho^2+10\,C_0^2\rho^2]\times
[0,2C_0\rho].
$$
Notice that the family of intervals $\big\{[2N\rho^2-10\,
C_0^2\rho^2,2N\rho^2+10\,C_0^2\rho^2]\big\}_{N=0}^{\rho^{-2}}$ is almost pairwise
disjoint.
Therefore we may argue as in the proof of Lemma 6.1 in  \cite{TVV} in order to derive the
desired estimate.\hfill $\Box$

\smallskip

For our next step, recall that $U_{1,i,j}$ is  essentially a rectangular box of dimension
$\sim
\rho^2\de\times \rho\de,$ and  $U_{2,i',j}$ is a thin curved box of width $\sim\rho^2 \de$
and length $\sim\rho,$  contained in a rectangle of dimension $\sim \rho^2\times \rho$
whose axes are parallel to the coordinate axes. Notice also  that if $\rho=1,$ then
$U_{1,i,j}$ is  essentially a square of size
 $\de\times\de,$  whereas   $U_{2,i',j}$ is a thin curved box of width $ \de$ and length
 $\sim 1.$  In this special case, it will be useful to further decompose $U_{2,i',j}$ into
 squares
 of  size $\de\times\de.$

 Analogously, since we may pass from the case $\rho=1$ to the case of general $\rho$ in
 our
 definition  of admissible pairs
 by means of the dilations   $D_\rho(x,y):=(\rho^2 x,\rho y),$  for arbitrary $\rho$ we
 decompose  $U_{2,i',j}$
 in the $y$-coordinate into intervals of length $\rho\de,$ by putting
 $U^k_{2,i',j}:=\{(x,y)\in U_{2,i',j}: 0\le y-k\rho\de< \delta\rho\}.$
  Then
  \begin{align*}
 U_{2,i',j}=\overset{\cdot}{\bigcup\limits_k} \,U^k_{2,i',j},
 \end{align*}
 where  the union is over a set of $\Landau (1/\de)$ indices $k.$  Accordingly, we
 decompose
 $g_{i',j}=\sum_k g_{i',j}^k,$ where
 $g_{i',j}^k:=g\chi_{U^k_{2,i',j}}.$
Then we have the following uniform square function estimate:
\begin{lemnr}\label{squaref} For $1<p\le 2$ there exists a constant $C_p>0$ such that for
every  $N=0,\dots,\rho^{-2}$ we have
\begin{align}\label{squarefunc}
\Big\|\sum_{i\in[N\de^{-1},(N+1)\de^{-1}], \atop  |i-i'|\sim C_0^2,\,j}
\widehat{f_{i,j}d\sigma}\,
\widehat{g_{i',j}d\sigma}\Big\|_{p}
	\le C_p \Big\|\Big(\sum_{i\in[N\de^{-1},(N+1)\de^{-1}],\atop |i-i'|\sim C_0^2,\,j\,
,k}
|\widehat{f_{i,j}d\sigma}\,
\widehat{g_{i',j}^kd\sigma}|^2\Big)^{1/2}\Big\|_{p}.
\end{align}
\end{lemnr}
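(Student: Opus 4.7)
The plan is to view \eqref{squarefunc} as a consequence of the dual form of Rubio de Francia's square function inequality for disjoint axis-parallel rectangles. Expanding $g_{i',j}=\sum_k g_{i',j}^k$, the left-hand side becomes $\|\sum_{i,j,k} F_{i,j,k}\|_p$, where
$$F_{i,j,k}:=\widehat{f_{i,j}d\sigma}\,\widehat{g_{i',j}^k d\sigma} = \widehat{(f_{i,j}d\sigma)*(g_{i',j}^k d\sigma)}.$$
In particular, the Fourier support of $F_{i,j,k}$ (i.e.\ the support of its inverse Fourier transform) is contained in the Minkowski sum $T_{i,j,k}:=S_{1,i,j}+S^k_{2,i',j}$ of the lifted patches of surface. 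Provided we can enclose each $T_{i,j,k}$ in an axis-parallel rectangle $R_{i,j,k}$ in such a way that the family $\{R_{i,j,k}\}$ is pairwise disjoint, the Carleson--Cordoba--Rubio de Francia theorem, applied in its dual formulation for $1<p\leq 2$, immediately yields the desired estimate \eqref{squarefunc}.

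The bulk of the work is therefore concentrated in verifying the disjointness of a suitable family of enclosing rectangles $R_{i,j,k}$. To this end one parametrizes a generic point of $T_{i,j,k}$ as $(x_1+x_2,\;y_1+y_2,\;\phi(x_1,y_1)+\phi(x_2,y_2))$ with $(x_1,y_1)\in U_{1,i,j}$ and $(x_2,y_2)\in U^k_{2,i',j}$ and, using $\phi(x,y)=xy+y^3/3$ together with the explicit bounds imposed by admissibility, performs a Taylor expansion in the small parameters of each patch. The definitions of the $U$'s immediately yield that the second coordinate concentrates at scale $\rho\de$ around $(j+k)\rho\de+j_2\rho$, so that the $y$-position records the value of $j+k$. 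The first coordinate, after isolating the large contribution $-y_2^2$ of order $(j_2\rho+k\rho\de)^2$ coming from $x_2$, records at scale $\rho^2\de$ the value of $i-j_2 k$; this determines $i$ in terms of $(x,y)$ once $k$ is known. The remaining ambiguity in $k$ is resolved by the third coordinate: after subtracting the large fixed terms, the $z$-position carries information about a different linear combination of the indices (coming from the combined $x_2 y_2$ and $y_2^3/3$ contributions), and a short algebraic check shows that this third piece of data, together with the previous two, suffices to reconstruct $(i,j,k)$ uniquely, yielding the required disjointness.

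The main obstacle is that, in contrast to the situation for the unperturbed saddle treated in \cite{v05}, the first two coordinates alone are not sufficient to separate the supports $T_{i,j,k}$: the curved-box structure of $U^k_{2,i',j}$ couples the parameter $k$ to the $x$-position (via the shift by $y_2(y_2-y_1^0)$), so that the $(x,y)$-projections of different $T_{i,j,k}$ can overlap in a nontrivial way. The new ingredient is to exploit also the vertical (third) coordinate, and this in turn requires using the cubic perturbation $y^3/3$ of the phase in an essential way. Once the disjointness of the $R_{i,j,k}$ is established, the lemma follows at once from the dual Rubio de Francia inequality.
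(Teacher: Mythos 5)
Your proof takes essentially the same route as the paper: enclose the Minkowski sums $S_{1,i,j}+S^k_{2,i',j}$ of the lifted patches in axis-parallel boxes of side $\sim\delta$ (after rescaling by $\tilde D_{\rho^{-1}}$), and invoke the dual Rubio de Francia inequality, with the key observation that the $(x,y)$-coordinates alone do not separate the pieces and one must exploit the third coordinate via the cubic term of the phase. The only caveats are that the paper establishes \emph{bounded overlap} of the enclosing cubes rather than the literal pairwise disjointness you assert (which suffices after splitting into $O(1)$ disjoint subfamilies), and it makes the separation precise not through the index bookkeeping you sketch but by reducing to the monotonicity of a cubic auxiliary polynomial $\psi(y_1)=\tfrac32 b^2 y_1-3by_1^2+2y_1^3$ with $\psi'\geq 3/2$ on the relevant range.
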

\noindent {\it Proof of Lemma \ref{squaref}:} Notice first that a translation in  $x$ by
$N\rho^2$ allows to reduce to the case $N=0,$  which we shall thus assume. Then the
relevant sets $U_{1,i,j}$ and $U_{2,i',j}$  will all have their $x$-coordinates in the
interval $[0,\rho^2].$

 For $i,\,i',\,j,\,k$ as above, set
$S_{1,i,j}:=\{(\xi,\phi(\xi)): \xi\in U_{1,i,j}\},$
$S_{2,i',j}^k:=\{(\xi,\phi(\xi)):\xi\in
U_{2,i',j}^k\}.$
  The key to the square function estimate  \eqref{squarefunc} is the following almost
  orthogonality lemma:

\begin{lemnr}\label{ao}
Assume $N=0,$ and denote by $\tilde D_\rho, \rho >0,$  the dilations on $\Bbb R^3$ given
by
$\tilde D_\rho(x,y,z):=(\rho^2 x, \rho y, \rho^3 z).$  Then there is a family of cubes
$\{Q_{i,i',j}^k\}_{i\in[0,\de^{-1}], |i-i'|\sim C_0\,,j\, ,k}$  in $\mathbb R^3$ with
bounded overlap, whose  sides  are parallel to the coordinate axes  and of  length
$\sim\delta,$     such that
$S_{1,i,j}+S_{2,i',j}^k\subset
\tilde D_\rho( Q_{i,i',j}^k).$
 \end{lemnr}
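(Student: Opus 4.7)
First, I would reduce to the case $\rho=1$ via the scaling symmetry. The phase $\phi(x,y)=xy+y^3/3$ is homogeneous of degree $3$ under the anisotropic dilations $D_\rho:(\bar x,\bar y)\mapsto(\rho^2\bar x,\rho\bar y)$, i.e., $\phi\circ D_\rho=\rho^3\phi$, so the lift $\tilde D_\rho$ preserves the graph $S$. Thus the rescaled domains $\bar U_{1,i,j}:=D_\rho^{-1}(U_{1,i,j})$ and $\bar U_{2,i',j}^k:=D_\rho^{-1}(U_{2,i',j}^k)$ satisfy the defining relations \eqref{whitneybox1} with $\rho$ replaced by $1$, and the task reduces to producing, for each admissible quadruple $(i,i',j,k)$, an axis-aligned cube $Q_{i,i',j}^k\subset\mathbb{R}^3$ of side $\sim\delta$ containing the Minkowski sum $\bar S_{1,i,j}+\bar S_{2,i',j}^k$ of the rescaled surface patches, with the family having bounded overlap.

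For the size bound, I would parametrise $\bar U_{1,i,j}$ by $\bar y_1\in[j\delta,(j+1)\delta)$ and $\bar x_1+j\delta(\bar y_1-j\delta)\in[i\delta,(i+1)\delta)$; since $j\delta\le1$, $\bar x_1$ varies in an interval of length $\le2\delta$. Moreover $\partial_{\bar x}\phi=\bar y_1=O(1)$, and along the long (slope $j\delta$) edge the directional derivative equals $\bar x_1+\bar y_1(\bar y_1-j\delta)\in[i\delta,(i+1)\delta)$, giving a variation $\lesssim\delta$ for $\phi(\bar z_1)$. The analogous computation on $\bar U_{2,i',j}^k$, exploiting the curved coordinate $\bar x_2+\bar y_2(\bar y_2-j\delta)\in[i'\delta,(i'+1)\delta)$ together with $\bar y_2\in[k\delta,(k+1)\delta)$ and $|\bar y_2|\lesssim 1$, yields the same three $O(\delta)$ bounds. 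Summing the contributions, $\bar S_{1,i,j}+\bar S_{2,i',j}^k$ fits in the required axis-aligned $\delta$-cube $Q_{i,i',j}^k$.

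For bounded overlap, evaluating at the distinguished corners $(i\delta,j\delta)$ of $\bar U_{1,i,j}$ and $(i'\delta-k(k-j)\delta^2,k\delta)$ of $\bar U_{2,i',j}^k$ shows that $Q_{i,i',j}^k$ has centre, up to shifts of size $\delta$,
\begin{align*}
C_x&=(i+i')\delta-k(k-j)\delta^2,\\
C_y&=(j+k)\delta,\\
C_z&=(ij+i'k)\delta^2+\tfrac{1}{3}(j^3+3k^2 j-2k^3)\delta^3.
\end{align*}
Two cubes overlap only if these three quantities agree modulo $O(\delta)$. The $C_y$-condition forces $j+k\equiv J+K$ modulo $O(1)$, and then the admissibility $|i-i'|\sim C_0^2$ combined with the $C_x$-condition determines $i$ and $i'$ individually up to $O(1)$. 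To pin down $j$ and $k$ individually I would exploit the decomposition
\[
 \phi(z_1)+\phi(z_2)=\tfrac{1}{2}(x_1+x_2)(y_1+y_2)+\tfrac{1}{12}(y_1+y_2)^3+\tfrac{1}{4}(y_1-y_2)\bigl(\psi(z_1)-\psi(z_2)\bigr),
\]
where $\psi(x,y):=2x+y^2$. On the admissible sets one computes $\psi(z_1)=2i\delta+j^2\delta^2+O(\delta)$ and $\psi(z_2)=2i'\delta+k(2j-k)\delta^2+O(\delta)$, so that the $C_z$-condition becomes a non-trivial equation for $j-k$ which, in tandem with the previous constraints, bounds the number of admissible quadruples giving overlap by a constant depending only on $C_0$.

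The main obstacle will be making this last step rigorous in the small-$\delta$ regime: the quadratic correction $k(k-j)\delta^2$ in $C_x$ and the cubic correction in $C_z$ can dominate the leading linear terms, and the Jacobian of the map from indices to centres degenerates in certain directions. I expect the completion to require a case split based on the size of $|j-k|$: in the range $|j-k|\lesssim C_0/\sqrt{\delta}$ the leading $-C_0^2(j-k)\delta^2/2$ contribution to the informative part of $C_z$ dominates, whereas for larger $|j-k|$ the subleading $(j-k)^3\delta^3/4$ term provides the discrimination, and in either case one obtains the desired $\delta,\rho$-independent bound on the multiplicity.
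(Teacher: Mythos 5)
Your overall strategy is sound and, once unwound, amounts to essentially the same computation as the paper's: fix $a=x_1+x_2$ and $b=y_1+y_2$ up to $O(\delta)$, use the admissibility constraint $|\tau_{z_1}(z_1,z_2)|\sim C_0^2\delta$ to kill the $x_1-x_2$ contribution, and observe that what remains of $\phi(z_1)+\phi(z_2)$ is (up to $O(\delta)$) a strictly monotone cubic in $v=y_1-y_2$. Indeed, combining your symmetric identity with $u:=x_1-x_2=(v^2-bv)/2+O(C_0^2\delta)$ gives $\phi(z_1)+\phi(z_2)=\tfrac12 ab+\tfrac1{12}b^3+\tfrac14 v^3+O(C_0^3\delta)$, which is exactly the paper's $\psi(y_1)$ after the change of variable $v=2y_1-b$. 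So your decomposition is a clean, symmetric repackaging of the same argument.

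However, as written your proof is not complete, and the obstacle you flag in the last paragraph is in fact not there: you have overlooked the $y$-separation constraint. For an admissible pair at scale $\rho=1$ we have $|y_2-y_1|\in[C_0/2,C_0]$ by \eqref{yseparation}, which after dividing by $\delta$ forces $|j-k|\sim C_0/\delta$; in particular $|j-k|\gg C_0/\sqrt\delta$ for all small $\delta$, so your first regime never occurs and no case split is needed. With $|v|\sim C_0$ and hence $(j-k)^3\delta^3\sim C_0^3$, the term $\tfrac14(j-k)^3\delta^3$ always dominates the correction $\sim(i-i')(j-k)\delta^2/2\sim C_0^3\delta$, and the equation $v^3-(v')^3=O(C_0^3\delta)$ factors as $(v-v')(v^2+vv'+(v')^2)=O(C_0^3\delta)$ with $v^2+vv'+(v')^2\gtrsim C_0^2$ (since $v,v'$ have the same sign once the strips are fixed), giving $|v-v'|\lesssim C_0\delta$. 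This is precisely what the paper phrases as $\psi'(y_1)\ge 3/2$ for $y_1\le(b-1)/2$. Once you incorporate the $y$-separation, your ``Jacobian degeneracy'' worry disappears, and the rest of your bookkeeping for $C_x$ and $C_y$ goes through without difficulty.
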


\noindent {\it Proof of Lemma \ref{ao}:}  Recalling the parabolic scalings
$D_\rho(x,y):=(\rho^2 x,\rho y)$ (under which
 the phase $\phi$ is homogeneous of degree 3),  we may apply the scaling  by $\tilde
 D_{\rho^{-1}}$ in $\Bbb R^3$ order to  reduce our considerations  to the  case $\rho=1.$

Then, as we have already seen,  $S_{1,i,j}$ and $S_{2,i',j}^k$ are contained in boxes of
side
length, say,  $2\delta$ and sides parallel to the axes, whose projections to the $x$-axis
lie within the unit interval $[0,1].$  Therefore we can choose for
$Q_{i,i',j}^k$  a square of of side length $4\delta,$  with  sides parallel to the axes,
with the property that $S_{1,i,j}+S_{2,i',j}^k\subset Q_{i,i',j}^k.$
We need to prove that the overlap is bounded.

Note that, if $(x_1,y_1)\in U_{1,i,j}$ and $(x_2,y_2)\in  U_{2,i',j}^k$ with $|i-i'|\sim
C^2_0,$ then, by Lemma \ref{sizeofdeltas}, since $\rho=1$,
 we have
\begin{align*}
|x_2-x_1+y_2(y_2-y_1)|=|\tau_{z_1}(z_1,z_2)|\sim C_0^2 \delta.
\end{align*}

It suffices to prove the following: if $(x_1,y_1),(x_2,y_2)$ and $(x_1',y_1'),(x_2',y_2')$
are so that each  coordinate of these points is bounded by a large multiple of $C_0,$ the
$y$-coordinates are positive and  satisfy $y_2-y_1\gtrsim C_0 $  (by the $y$-separation
\eqref{yseparation}), and
\begin{eqnarray*}
x_2-x_1+y_2(y_2-y_1)&\sim&C_0^2\de,\\
x_2'-x_1'+y_2'(y_2'-y_1')&\sim&C_0^2\de,\\
x_1+x_2&=&x_1'+x_2'+\Landau(\delta),\\
y_1+y_2&=&y_1'+y_2'+\Landau(\delta),\\
x_1y_1+\frac{y_1^3}3+x_2y_2+\frac{y_2^3}3&=&x_1'y_1'+\frac{(y_1')^3}3+x_2'y_2'+\frac{(y_2')^3}3
+\Landau(\delta),
\end{eqnarray*}
then
 \begin{align}\label{overlap2}
x_1'=x_1+\Landau(\delta), \ y_1'=y_1+\Landau(\delta),\ x_2'=x_2+\Landau(\delta),\
y_2'=y_2+\Landau(\delta).
\end{align}
Set
$$
a:=x_1+x_2, \quad b:=y_1+y_2,\qquad a':=x'_1+x'_2, \quad b':=y'_1+y'_2,
$$
and
$$
t_1:=x_1y_1+\frac{y_1^3}3,\qquad t_2:=x_2y_2+\frac{y_2^3}3.
$$
The analogous quantities defined by $(x_1',y_1'),(x_2',y_2')$ are denoted by $t'_1$ and
$t'_2.$
Notice that by our assumptions,  $a$ and $b$ only vary of order $\Landau (\de)$ if we
replace $(x_1,y_1),(x_2,y_2)$ by $(x_1',y_1'),(x_2',y_2').$
Then,
$$
t_1+t_2=2x_1y_1-bx_1-ay_1+ab+\frac{b^3}3-b^2y_1+by_1^2.
$$

We choose $c$ with $|c|\sim C^2_0,$ such that $x_2-x_1+y_2(y_2-y_1)=c\delta.$ Then we may
re-write $x_1=\big (a-c\delta+(b-y_1)(b-2y_1)\big)/2.$
Therefore,
$$
t_1+t_2=C(a,b)+\frac{bc\delta}2+y_1[-c\delta+\frac32
b^2]-3by_1^2+2y_1^3:=C(a,b)+\Landau(\delta)+\psi(y_1),
$$
where  $C(a,b)$ is a polynomial in $a,b$ and where we have put   $\psi(y):=\frac32
b^2y-3by^2+2y^3.$ Similarly, $t'_1+t'_2=C(a',b')+\Landau(\delta)+\psi(y'_1),$
and thus, since  $a=a'+\Landau(\delta), b=b'+\Landau(\delta),$ hence
$C(a,b)=C(a',b')+\Landau(\delta),$ we have
$$
\psi(y_1)=\psi(y'_1)+ \Landau(\delta).
$$
We may assume without loss of generality that $y_2-y_1>0$ (the other case can be treated
in
a similar way). Then, because of the $y$-separation \eqref{yseparation}, we have
$y_2-y_1\gtrsim C_0 $ and $y_1\ge 0$ and $b=y_2+y_1,$ so that  $y_1\le \frac{b-1}2$ and
$b\ge1.$ It is a calculus exercise to prove that in this situation, for $y\le\frac{b-1}2,$
we have $\psi'(y)\ge3/2.$
This shows that we must have $y_1'=y_1+\Landau(\delta),$ hence also
$y_2'=y_2+\Landau(\delta),$ and then our first two assumptions imply also the remaining
assertions in \eqref{overlap2}.

This finishes the proof of the almost orthogonality Lemma \ref{ao}.
\hfill $\Box$

\smallskip
To complete the proof of Lemma \ref{squaref}, define the linear operators
$$\widehat{T_{i,i',j}^kh}(\xi):=\chi_{\tilde D_\rho(Q_{i,i',j}^k)}(\xi)\hat h(\xi),
$$
 and
$$
S(h):=\{T_{i,i',j}^k h\}_{i\in [0,\de^{-1}]\,,|i-i'|\sim C^2_0,\,j,k},
$$ for $h\in L^{p'}(\mathbb R^3).$  By Lemma \ref{ao} and Rubio de Francia's estimate
\cite{rdf}, we have
$$
\Big\|\Big(\sum_{i\in [0,\de^{-1}],\,|i-i'|\sim
C_0^2,\,j,k}|T^k_{{i,i',j}}h|^2\Big)^{1/2}\Big\|_{p'}\le C
\|h\|_{L^{p'}(\mathbb R^3)},
$$
for $2\le p'<\infty$ (this is clearly true when $\rho=1,$ and the linear change of
coordinates given by $\tilde D_\rho$ does not change this estimate).  Then, by duality, we
have
the adjoint operator estimate
$$
\Big\|\sum_{i\in [0,\de^{-1}],\,|i-i'|\sim
C_0^2,\,j,k}T^k_{i,i',j}F_{i,i',j}^k\Big\|_{L^p(\mathbb R^3)}\le
C\Big\|\Big(\sum_{i\in [0,\de^{-1}],\,|i-i'|\sim
C_0^2,\,j,k}|F_{i,i',j}^k|^2\Big)^{1/2}\Big\|_{L^p(\mathbb
R^3)}.
$$
The estimate \eqref{squarefunc} follows by applying the estimate above to the family of
functions
$F_{i,i',j}^k:= \widehat{f_{i,j}d\sigma}\,\widehat{g_{i',j}^k d\sigma},$  since
$F_{i,i',j}^k$  is the Fourier transform of a function supported in  $\tilde D_\rho(
Q_{i,i',j}^k).$

\hfill $\Box$
\smallskip

Since $p/2<1$ in Lemma \ref{V1V2bilin},  from \eqref{psquarefunc} and  \eqref{squarefunc}
we deduce that
\begin{align*}
\big\|\sum_{j,\,i,|i-i'|\sim C_0^2}\, \widehat{f_{i,j}d\sigma}\,
\widehat{g_{i',j}d\sigma}\big\|_{p}^p
&\lesssim\sum_{N=0}^{\rho^{-2}} \Big\|\Big(\sum_{j, \, i\in[N\de^{-1},(N+1)\de^{-1}],\atop
|i-i'|\sim C_0^2}\sum_k
|\widehat{f_{i,j}d\sigma}\,\widehat{g_{i',j}^kd\sigma}|^2\Big)^{1/2}\Big\|_{p}^p\\
&\lesssim \sum_{N=0}^{\rho^{-2}}\sum_{j, \,i\in[N\de^{-1},(N+1)\de^{-1}],\atop |i-i'|\sim
C_0^2}\Big\|\Big(\sum_{k}
|\widehat{f_{i,j}d\sigma}\,\widehat{g_{i',j}^kd\sigma}|^2\Big)^{1/2}\Big\|_{p}^p\\
&=\sum_{j,\, i, |i-i'|\sim C_0^2}\Big\|\Big(\sum_{k}
|\widehat{f_{i,j}d\sigma}\,\widehat{g_{i',j}^kd\sigma}|^2\Big)^{1/2}\Big\|_{p}^p.
\end{align*}
Using Khintchine's inequality, it suffices to bound
\begin{align*}
	 \sum_{j, \,i,|i-i'|\sim C_0^2}\|\widehat{f_{i,j}d\sigma}\,\widehat{\tilde
g_{i',j}d\sigma}\|_{p}^p
\end{align*}
for all $\tilde g_{i',j} = \sum_k \epsilon_k g_{i',j}^k$, $\epsilon_k=\pm 1$. Note that
$\|\tilde g_{i',j}\|_q=\| g_{i',j}\|_q.$ Now, by Theorem \ref{bilinear2},
\begin{align*}
	\big(\sum_{j, \,i,|i-i'|\sim C_0^2} \|\widehat{f_{i,j}d\sigma}\widehat{\tilde
g_{i',j}d\sigma}\big\|_{p}^p\big)^{1/p}
	\lesssim& \delta^{5-3/q-6/p} \rho^{6(1-1/p-1/q)}\left(\sum_{j,\, i,|i-i'|\sim C_0^2}
\big\|f_{i,j}\big\|_q^p\big\|\tilde g_{i',j}\big\|_q^p\right)^{1/p}\\
=& \delta^{5-3/q-6/p} \rho^{6(1-1/p-1/q)}\left(\sum_{j,\, i,|i-i'|\sim C_0^2}
\big\|f_{i,j}\big\|_q^p\big\| g_{i',j}\big\|_q^p\right)^{1/p}.
\end{align*}

 Assume for a moment that  $j$ is fixed.   Then  notice  the following obvious, but
 crucial
 facts:
  \begin{itemize}
\item[(i)] For every $i,$ $U_{1,i,j}$ is contained in the horizontal   ``strip''
    $V_{j,\rho} $  given by  $0\le y-j\de\rho<\rho\de.$
\item[(ii)] The sets $U_{1,i,j}$ are mutually disjoint, and  that the same holds true
    for the sets $U_{2,i',j}.$
\end{itemize}

Let us therefore put $f_j:=f\chi_{V_{j,\rho}}.$   Then,  by Cauchy-Schwarz' inequality,
\begin{align*}
\sum_{i,|i-i'|\sim C^2_0} \big\|f_{i,j}\big\|_q^p\big\|g_{i',j}\big\|_q^p
	\lesssim& \left(\sum_i \big\|f_{i,j}\big\|_q^{2p}\right)^{1/2}
	\left(\sum_{i'}\big\|g_{i',j}\big\|_q^{2p}\right)^{1/2}\\
	\lesssim& \left(\sum_i \big\|f_{i,j}\big\|_q^{q}\right)^{p/q}
	\left(\sum_{i'}\big\|g_{i',j}\big\|_q^{q}\right)^{p/q}\\
	\lesssim& \big\|f_{j}\big\|^p_q\big\|g\big\|^p_q,
\end{align*}
where we have also used that  $2p\geq q.$ Consequently, since the total number of $j$'s
over which we are summing is of order $\Landau(\rho/\rho\de)=\Landau(1/\de),$  we get
\begin{align*}
	\big\|\sum_{i,|i-i'|\sim C^2_0,\, j} \widehat{f_{i,j}d\sigma}\,
\widehat{g_{i',j}d\sigma}\big\|_{p}
	\lesssim& \delta^{5-3/q-6/p} \rho^{6(1-1/p-1/q)}\big(\sum_{i, |i-i'|\sim C^2_0,\,j}
\big\|f_{i,j}\big\|_q^p\big\| g_{i',j}\big\|_q^p\big)^{1/p}\\
\lesssim& \delta^{5-3/q-6/p} \rho^{6(1-1/p-1/q)}\big(\sum_j\|f_j\|_q^p\big)^{1/p}\|g\|_q\\
\lesssim& \delta^{5-3/q-6/p}
\rho^{6(1-1/p-1/q)}(1/\de)^{1/p-1/q}\big(\sum_j\|f_j\|_q^q\big)^{1/q}\|g\|_q\\
\lesssim& \delta^{5-2/q-7/p} \rho^{6(1-1/p-1/q)}\|f\|_q\|g\|_q.
\end{align*}
Now observe that the exponent in our dyadic parameter $\delta$ is
\begin{align*}
	5-2/q-7/p = 3+2/q'-7/p \geq 3-5/p > 0,
\end{align*}
so that we can sum in \eqref{ChiAr}  over all dyadic scales $\delta\ll 1$ and arrive at
the
estimate  \eqref{VVbilin}.

\end{proof}
 Note that, up to this point,  we have been  working with a fixed admissible pair if strips $V_1\sim V_2$ in which 
 all functions where supported.
\smallskip

In a final step, we shall prove the linear Fourier extension estimate of Theorem
\ref{mainresult}
by summing over  the contributions by all admissible  pairs of strips $V_1\backsim V_2,$
over all scales $\rho.$  To this end, let us  write $f_{j,\rho}:=f\chi_{V_{j,\rho}}.$  In
view of our Whitney-decomposition \eqref{whitney1}, we may therefore write
$$
\widehat{f\,d\sigma}\, \widehat{f\,d\sigma}=\sum_{\rho\lesssim 1}\sum_{j, j\backsim
j'}\widehat{f_{j,\rho}\,d\sigma}\widehat{f_{j',\rho}\,d\sigma},
$$
where summation in $\rho$ is meant to be over all dyadic $0<\rho\lesssim 1$ and we wrote
$j\backsim j' $ to denote
$V_{j,\rho}\backsim V_{j',\rho}.$
Assume that $r=2p$  and $q$ satisfy the hypotheses of Theorem \ref{mainresult}, so that
$p>5/3$ and $1/q'>1/p.$
Then, by Lemma \ref{V1V2bilin}, for fixed $j,\,j',$
$$
\|\widehat{f_{j,\rho}\,d\sigma}\,\widehat{f_{j',\rho}\,d\sigma}\|_{L^p}\le C_{p,q}\,
\rho^{2(1-1/p-1/q)}\|f_{j,\rho}\|_q\|f_{j',\rho}\|_q.
$$
Therefore, since $p<2,$ using Minkowski's inequality and Lemma 6.1 in \cite{TVV}
\begin{align*}
\|\widehat{f\,d\sigma}\|_{2p}^{2}=\|\widehat{f\,d\sigma}\,\widehat{f\,d\sigma}\|_p
&=\big\|\sum_{\rho\lesssim 1}\sum_{j, j\backsim
j'}\widehat{f_{j,\rho}\,d\sigma}\widehat{f_{j',\rho}\,d\sigma}\big\|_p\\
&\le\sum_{\rho\lesssim 1} \big \| \sum_{j,j\backsim
j'}\widehat{f_{j,\rho}\,d\sigma}\widehat{f_{j',\rho}\,d\sigma}\big\|_p\\
&\le\sum_{\rho\lesssim 1} \bigg(\sum_{j,j\backsim j'}\big \|
\widehat{f_{j,\rho}\,d\sigma}\widehat{f_{j',\rho}\,d\sigma}\big\|_p^p\bigg)^{1/p}.
\end{align*}
For fixed $\rho,$  by Lemma \ref{V1V2bilin}
and Cauchy-Schwarz' inequality,
\begin{align*}
\sum_{j,j\backsim j'}\big \|
\widehat{f_{j,\rho}\,d\sigma}\widehat{f_{j',\rho}\,d\sigma}\big\|_p^p
&\lesssim  \rho^{2(1-1/p-1/q)p}\sum_{j,j\backsim j'}\|f_{j,\rho}\|_q^p\,
\|f_{j',\rho}\|_q^p\\
&\lesssim \rho^{2(1-1/p-1/q)p}\bigg(\sum_j\|f_{j,\rho}\|_q^{2p}\bigg)^{1/{2}}
\bigg(\sum_{j'}\|f_{j',\rho}\|_q^{2p}\bigg)^{1/{2}}\\
&\lesssim \rho^{2(1-1/p-1/q)p}\bigg(\sum_j\|f_{j,\rho}\|_q^{q}\bigg)^{p/{q}}
\bigg(\sum_{j'}\|f_{j',\rho}\|_q^{q}\bigg)^{p/{q}}\\
&\lesssim \rho^{2(1-1/p-1/q)p}\|f\|_q^p\, \|f\|_q^p.
\end{align*}
where we have again used that  $2p\geq q.$
Therefore, since  $1-1/p-1/q>0,$
$$
\|\widehat{f\,d\sigma}\|_{2p}^{2}
\lesssim\|f\|_q^2,
$$

This completes the proof of our Fourier extension (respectively restriction)  Theorem
\ref{mainresult}.

\hfill$\Box$


\thispagestyle{empty}

\renewcommand{\refname}{References}

\end{document}